\newcommand{\urltilde}{\kern -.15em\lower .7ex\hbox{~}\kern .04em}  
\tikzset{snake it/.style={decorate, decoration=snake}}
\def\@adminfootnotes{%
  \let\@makefnmark\relax  \let\@thefnmark\relax
  \ifx\@empty\@date\else \@footnotetext{\@setdate}\fi
  \ifx\@empty\@subjclass\else \@footnotetext{\@setsubjclass}\fi
  \ifx\@empty\@keywords\else \@footnotetext{\@setkeywords}\fi
  \ifx\@empty\thankses\else \@footnotetext{%
    \def\par{\let\par\@par}\@setthanks}%
  \fi
}
\newtheorem{theorem}{Theorem}[section]
\newtheorem{proposition}[theorem]{Proposition}
\newtheorem{prop}[theorem]{Proposition}
\newtheorem{corollary}[theorem]{Corollary}
\newtheorem{lemma}[theorem]{Lemma}
\newtheorem{question}[theorem]{Question}
\theoremstyle{definition}
\newtheorem{definition}[theorem]{Definition}
\newtheorem{example}[theorem]{Example}
\theoremstyle{remark}
\newtheorem{remark}[theorem]{Remark}
\numberwithin{equation}{section}
\let\oldtocsection=\tocsection
\let\oldtocsubsection=\tocsubsection
\renewcommand{\tocsection}[2]{\hspace{0em}\oldtocsection{#1}{#2}}
\renewcommand{\tocsubsection}[2]{\hspace{1em}\oldtocsubsection{#1}{#2}}
\title[Universal construction, Brauer envelope, pseudocharacters]{Universal construction in monoidal and non-monoidal settings, the Brauer envelope, and pseudocharacters}
\author{Mee Seong Im}
\address{Department of Mathematics, United States Naval Academy, Annapolis, MD 21402, USA}
\email{\href{mailto:meeseongim@gmail.com}{meeseongim@gmail.com}}
\thanks{}
\author{Mikhail Khovanov} 
\address{Department of Mathematics, Columbia University, New York, NY 10027, USA}
\email{\href{mailto:khovanov@math.columbia.edu}{khovanov@math.columbia.edu}}
\thanks{}
\author{Victor Ostrik} 
\address{Department of Mathematics, University of Oregon, Eugene, OR 97403, USA}
\email{\href{mailto:vostrik@math.uoregon.edu}{vostrik@math.uoregon.edu}}
\thanks{}
\subjclass[2020]{Primary: 57K16, 18M05, 18M30;
Secondary: 15A15}
\date{March 5, 2023}
\providecommand{\keywords}[1]{\textbf{\textit{Key words and phrases.}} #1}
\keywords{Topological theory, topological quantum field theory, TQFT, defects, universal construction,  pseudocharacter, pseudo-TQFT, Brauer envelope.}
\begin{document}

\def\AND{\mathsf{AND}}

\def\threeup{\raisebox{0.0ex}{\scalebox{1}[1.5]{\rotatebox[origin=c]{90}{$\subset$}}} \hspace{-0.19cm}\raisebox{0.03ex}{\scalebox{1}[1.30]{\rotatebox[origin=c]{90}{$-$}}}}

\def\threedown{\raisebox{0.0ex}{\scalebox{1}[1.5]{\rotatebox[origin=c]{-90}{$\subset$}}} \hspace{-0.16cm}\raisebox{0.06ex}{\scalebox{1}[1.30]{\rotatebox[origin=c]{90}{$-$}}}}

\def\cupbullet{\raisebox{0.0ex}{\scalebox{1}[1.50]{\rotatebox[origin=c]{0}{$\cup$}}}\hspace{-0.21cm}\raisebox{-0.45ex}{\scalebox{1}[1.00]{$\bullet$}} }

\def\capbullet{\raisebox{0.0ex}{\scalebox{1}[1.50]{\rotatebox[origin=c]{0}{$\cap$}}}\hspace{-0.21cm}\raisebox{1.10ex}{\scalebox{1}[1.00]{$\bullet$}} }

\def\tallcupleft{\raisebox{-0.15ex}{\scalebox{1}[2.75]{\rotatebox[origin=c]{180}{$\curvearrowright$}}}}

\def\tallcupright{\raisebox{-0.15ex}{\scalebox{1}[2.75]{\rotatebox[origin=c]{180}{$\curvearrowleft$}}}}

\def\tallcapleft{\raisebox{-0.35ex}{\scalebox{1}[2.75]{\rotatebox[origin=c]{0}{$\curvearrowleft$}}}}

\def\tallcapright{\raisebox{-0.35ex}{\scalebox{1}[2.75]{\rotatebox[origin=c]{0}{$\curvearrowright$}}}}

\def\cupleft{\raisebox{-0.25ex}{\scalebox{1}[1.75]{\rotatebox[origin=c]{180}{$\curvearrowright$}}}}

\def\cupright{\raisebox{-0.25ex}{\scalebox{1}[1.75]{\rotatebox[origin=c]{180}{$\curvearrowleft$}}}}

\def\capleft{\raisebox{0.78ex}{\scalebox{1}[-1.75]{\rotatebox[origin=c]{180}{$\curvearrowright$}}}}

\def\capright{\raisebox{0.78ex}{\scalebox{1}[-1.75]{\rotatebox[origin=c]{180}{$\curvearrowleft$}}}}

\def\Char{\mathsf{char}}
\def\concatenate{\mathsf{concatenate}}
\def\gen{\mathsf{generators}}
\def\GL{\mathsf{GL}}
\def\PGL{\mathsf{PGL}}
\def\init{\mathsf{in}}
\def\t{\mathsf{t}}
\def\out{\mathsf{out}}
\def\I{\mathsf I}
\def\R{\mathbb R}
\def\Q{\mathbb Q}
\def\Z{\mathbb Z}
\def\mc{\mathcal{c}}
\def\FL{\mathsf{F}\mathscr{L}}
\def\finite{\mathsf{finite}}
\def\infinite{\mathsf{infinite}}
\def\N{\mathbb N} 
\def\C{\mathbb C}
\def\SL{\mathsf{SL}}
\def\SS{\mathbb S} 
\def\CP{\mathbb P}
\def\Ob{\mathsf{Ob}}
\def\op{\mathsf{op}}
\def\new{\mathsf{new}}
\def\old{\mathsf{old}}
\def\OR{\mathsf{OR}}
\def\AND{\mathsf{AND}}
\def\rat{\mathsf{rat}}
\def\rec{\mathsf{rec}}
\def\Rep{\mathsf{Rep}}
\def\tail{\mathsf{tail}}
\def\coev{\mathsf{coev}}
\def\ev{\mathsf{ev}}
\def\id{\mathsf{id}}
\def\s{\mathsf{s}}
\def\t{\mathsf{t}}
\def\start{\textsf{starting}}
\def\Notation{\textsf{Notation}}
\renewcommand\SS{\ensuremath{\mathbb{S}}}
\newcommand{\kllS}{\kk\llangle  S \rrangle} 
\newcommand{\kllSS}[1]{\kk\llangle  #1 \rrangle}
\newcommand{\klS}{\kk\langle S\rangle}  
\newcommand{\aver}{\mathsf{av}}  
\newcommand{\ophana}{\overline{\phantom{a}}}
\newcommand{\Bool}{\mathbb{B}}
\newcommand{\dmod}{\mathsf{-mod}}
\newcommand{\dfgpmod}{\mathsf{-fgpmod}}
\newcommand{\lang}{\mathsf{lang}}
\newcommand{\pfmod}{\mathsf{-pfmod}}
\newcommand{\primitive}{\mathsf{irr}}
\newcommand{\Bmod}{\Bool\mathsf{-mod}}  
\newcommand{\Bmodo}[1]{\Bool_{#1}\mathsf{-mod}}  
\newcommand{\Bfsmod}{\Bool\mathsf{-}\underline{\mathsf{fmod}}}  
\newcommand{\undvar}{\underline{\varepsilon}} 
\newcommand{\undotimes}{\underline{\otimes}}
\newcommand{\cl}{\mathsf{cl}}
\newcommand{\PP}{\mathcal{P}} 
\newcommand{\whA}{\widehat{A}}
\newcommand{\whC}{\widehat{C}}
\newcommand{\Sigmaa}{\Sigma^{\ast}}
\newcommand{\Sigmac}{\Sigma^{\circ}}
\newcommand{\mcI}{\mathcal{I}}
\newcommand{\mcB}{\mathcal{B}}
\newcommand{\mcF}{\mathcal{F}}
\newcommand{\psdeg}{\mathsf{deg}}

\newcommand{\alphai}{\alpha_I}  
\newcommand{\alphac}{\alpha_{\circ}}  
\newcommand{\alphap}{(\alphai,\alphac)} 
\newcommand{\alphalr}{\alpha_{\leftrightarrow}}
\newcommand{\alphaZ}{\alpha_{\Z}}
\newcommand{\mcCinfalpha}{\mcC^{\infty}_{\alpha}}
\newcommand{\mathT}{\mathsf{T}}
\newcommand{\mathF}{\mathsf{F}}
\newcommand{\mcS}{\mathcal{S}}
\newcommand{\GCcross}{\mcG_{\ell} \times_{\mcC}\mcG_r}

\let\oldemptyset\emptyset
\let\emptyset\varnothing

\newcommand{\undempty}{\underline{\emptyset}}
\def\basis{\mathsf{basis}}
\def\irr{\mathsf{irr}} 
\def\spanning{\mathsf{spanning}}
\def\elmt{\mathsf{elmt}}

\def\l{\lbrace}
\def\r{\rbrace}
\def\o{\otimes}
\def\lra{\longrightarrow}
\def\Ext{\mathsf{Ext}}
\def\mf{\mathfrak} 
\def\mcC{\mathcal{C}}
\def\mcO{\mathcal{O}}
\def\Fr{\mathsf{Fr}}

\def\ovb{\overline{b}}
\def\tr{{\sf tr}} 
\def\det{{\sf det }} 
\def\tral{\tr_{\alpha}}
\def\one{\mathbf{1}}   

\def\lra{\longrightarrow}
\def\twoheadlra{\longrightarrow\hspace{-4.6mm}\longrightarrow}
\def\hooklra{\raisebox{.2ex}{$\subset$}\!\!\!\raisebox{-0.21ex}{$\longrightarrow$}}
\def\kk{\mathbf{k}}  
\def\gdim{\mathsf{gdim}}  
\def\rk{\mathsf{rk}}
\def\undep{\underline{\epsilon}}
\def\mathM{\mathbf{M}}  

\def\CCC{\mathcal{C}} 
\def\wCCC{\widehat{\CCC}}  

\def\complement{\mathsf{comp}}
\def\Rec{\mathsf{Rec}} 

\def\Cob{\mathsf{Cob}} 
\def\Kar{\mathsf{Kar}}   

\def\dmod{\mathsf{-mod}}   
\def\pmod{\mathsf{-pmod}}    
\def\Z{\mathbb{Z}}

\newcommand{\brak}[1]{\ensuremath{\left\langle #1\right\rangle}}
\newcommand{\oplusop}[1]{{\mathop{\oplus}\limits_{#1}}}
\newcommand{\ang}[1]{\langle #1 \rangle } 
\newcommand{\ppartial}[1]{\frac{\partial}{\partial #1}} 
\newcommand{\checkr}{{\bf \color{red} CHECK IT}}
\newcommand{\checkb}{{\bf \color{blue} CHECK IT}}
\newcommand{\checkk}[1]{{\bf \color{red} #1}}

\newcommand{\mcA}{{\mathcal A}}
\newcommand{\mcG}{{\mathcal G}}
\newcommand{\cZ}{{\mathcal Z}}
\newcommand{\sq}{$\square$}
\newcommand{\bi}{\bar \imath}
\newcommand{\bj}{\bar \jmath}
\newcommand{\BcC}{\mathcal{B}(\mathcal{C})}
\newcommand{\LcC}{L(\mcC)}
\newcommand{\undX}{\underline{X}}
\newcommand{\Sets}{\mathsf{Sets}} 
\newcommand{\kkB}{\kk\langle h_B\rangle }  

\newcommand{\undn}{\underline{n}}
\newcommand{\undm}{\underline{m}}
\newcommand{\undzero}{\underline{0}}
\newcommand{\undone}{\underline{1}}
\newcommand{\undtwo}{\underline{2}}

\newcommand{\cob}{\mathsf{cob}} 
\newcommand{\comp}{\mathsf{comp}} 

\newcommand{\Aut}{\mathsf{Aut}}
\newcommand{\Hom}{\mathsf{Hom}}
\newcommand{\Idem}{\mathsf{Idem}}
\newcommand{\Ind}{\mbox{Ind}}
\newcommand{\Id}{\textsf{Id}}
\newcommand{\End}{\mathsf{End}}
\newcommand{\iHom}{\underline{\mathsf{Hom}}}
\newcommand{\Bools}{\Bool^{\mathfrak{s}}}
\newcommand{\mfs}{\mathfrak{s}}

\newcommand{\drawing}[1]{
\begin{center}{\psfig{figure=fig/#1}}\end{center}}

\def\endomCempt{\End_{\mcC}(\emptyset_{n-1})}

\def\MS#1{{\color{blue}[MS: #1]}}
\def\MK#1{{\color{red}[MK: #1]}}

\begin{abstract} This paper clarifies basic definitions in the universal construction of topological theories and monoidal categories. The definition of the universal construction is given for various types of monoidal categories, including rigid and symmetric. It is also explained how to set up the universal construction for non-monoidal categories. The second part of the paper explains how to associate a rigid symmetric monoidal category to a small category, a sort of the Brauer envelope of a category. The universal construction for the Brauer envelopes generalizes some earlier work of the first two authors on automata, power series and topological theories. Finally, the theory of pseudocharacters (or pseudo-representations), which is an essential tool in modern number theory, is interpreted via one-dimensional topological theories and TQFTs with defects. The notion of a pseudocharacter is studied for Brauer categories  and the lifting property to characters of semisimple representations is established in characteristic 0 for Brauer categories with at most countably many objects.  
The paper contains a brief discussion of pseudo-holonomies, which are functions from loops in a manifold to real numbers similar to traces of the holonomies along loops of a connection on a vector bundle on the manifold.
It concludes with a classification of pseudocharacters (pseudo-TQFTs) and their  generating functions for the category of oriented two-dimensional cobordisms in the characteristic 0 case.
\end{abstract}

\maketitle
\tableofcontents

%
%

\section{Introduction}
\label{section:intro}

This paper clarifies the basic setup for the universal construction of topological theories and monoidal categories. 
Furthermore, it proposes a generalization of the Brauer category that starts with a category $\mcC$ and forms a category of one-dimensional cobordisms decorated by objects and morphisms of $\mcC$. The universal construction for the Brauer category of $\mcC$ is investigated. It is also explained that the notion of a pseudocharacter, essential in modern algebraic number theory, can be described in the language of the universal construction for the Brauer category of a category with one object, as a lifting property from a topological theory to a one-dimensional topological quantum field theory (TQFT) with defects.  

The universal construction~\cite{BHMV,FKNSWW,Kh1,RW1,Kh2,KS3,Me} starts with a category $\mcC$ of cobordisms or a similar monoidal category and a multiplicative evaluation $\alpha$ of closed cobordisms (more generally, a multiplicative evaluation of the abelian monoid $\End_{\mcC}(\one)$ of endomorphisms of the identity object). The evaluation takes values in some commutative ring or semiring $R$. One then passes to the category $R\mcC$ of $R$-linear combinations of morphisms in $\mcC$. Category $R\mcC$ carries an equivalence relation, where two morphisms are equivalent if and only if no matter how they are extended to endomorphisms of the unit object $\one$ and evaluated via $\alpha$, the evaluations are equal. The resulting quotient category $\mcC_{\alpha}$ is $R$-linear and monoidal and often gives rise to a topological theory, a weak analogue of a TQFT, which to an object $X$ of $\mcC$ associates its state space $A(X):=\Hom_{\mcC_{\alpha}}(\one,X)$ and to a morphism $f\in \Hom_{\mcC}(X_1,X_2)$ a map $A(f):A(X_1)\lra A(X_2)$. This assignment is a lax monoidal functor $A_{\alpha}$ from $\mcC$ to the category $R\dmod$ of $R$-modules: 
\begin{eqnarray}
   & &  A_{\alpha}:\mcC\lra R\dmod, \ \ 
   X\longmapsto A(X), \\
   & &  A(X_1)\otimes_R A(X_2) \lra A(X_1\otimes X_2), \ \  X_1, X_2\in\Ob(\mcC). \label{eq_lax}
\end{eqnarray}
It is lax monoidal in the sense that maps in \eqref{eq_lax} exist but are not isomorphisms, in general (isomorphism property would make the functor $A_{\alpha}$ monoidal). 

\vspace{0.07in} 

This paper consists of three parts: 
\begin{itemize}
\item Section~\ref{sec-ucm} explain the universal construction in general monoidal categories and non-monoidal categories.
\item Section~\ref{sec_brauer} describes the Brauer category of $\mcC$ of one-cobordisms between zero-manifolds decorated by objects and morphisms of $\mcC$.
It also explains its generalization when one-cobordism may have \emph{inner} endpoints that do not belong to the outer boundary of the cobordism. Universal construction for Brauer categories is then considered. 
\item Sections~\ref{sec-pseudo} and~\ref{sec_2D_pseudo} explore
 the relation between pseudocharacters and topological theories. 
\end{itemize}

In Section~\ref{subsec_general} we give a definition of the universal construction in general monoidal categories, requiring a multiplicative homomorphism $\alpha:\End_{\mcC}(1)\lra R$ from the abelian monoid of endomorphisms of the identity object to a commutative semiring $R$. We explain how various properties of $R$ and $\mcC$, including $R$ being a commutative ring and $\mcC$ being symmetric or rigid, simplify the definition and study of the universal construction. 
 
In Section~\ref{subsec_nonmon} we propose a setup for the universal construction when category $\mcC$ is not monoidal. One then substitutes the unit object $\one$ by collections of starting and ending objects. The state space of an object $X$ is an $R$-module generated by morphisms from starting objects to $X$ subject to relations on $R$-linear combinations on these morphisms that come from composing with morphisms from $X$ to ending objects and evaluating compositions via $\alpha$. We further generalize this setup in Section~\ref{subsec_presheaves}, replacing collections of starting and ending objects by a pair of presheaves of sets, one for $\mcC$ and one for $\mcC^{\op}$.

\vspace{0.07in} 

Section~\ref{subsec_brauer} defines \emph{the Brauer envelope} or \emph{the Brauer category} $\BcC$ of a small category $\mcC$. One can think of it as the category of oriented one-dimensional cobordisms decorated by morphisms of $\mcC$ between oriented zero-manifolds decorated by objects of $\mcC$. Composition of basic cobordisms produces loops decorated by endomorphisms in $\mcC$ subject to a suitable equivalence relation. That section also explains the Poincar\'e dual construction, where cobordisms carry $0$-dimensional defects decorated by morphisms of $\mcC$ while intervals bounded by these defects are decorated by objects of $\mcC$. 

Section~\ref{subsec_uc_Brauer} describes the universal construction for the Brauer category, where each loop $x$ in $\BcC$ is assigned an element $\alpha(x)$ in a commutative semiring $R$. This results in an $R$-linear rigid symmetric monoidal category $\mcB_{\alpha}(\mcC)$ and a chain of categories and functors 
\[
\mcC \lra \BcC \lra R\BcC \lra \mcB_{\alpha}(\mcC),
\]
with the first arrow -- an ``inclusion'' of $\mcC$ into its Brauer envelope, the second arrow being $R$-linearization of the Brauer category and the third arrow given by the universal construction for evaluation $\alpha$. 

In Section~\ref{subsec_Brauer_boundary} we consider more general $\mcC$-decorated one-cobordisms that may terminate (end) in the ``middle'' of the cobordism, that is, not at the boundary of the cobordism which describes the source and target of a morphism in $\BcC$. These additional endpoints are called \emph{inner} or \emph{floating} endpoints. Section~\ref{subsec_presheaves}  describes a general framework for such cobordisms, via a pair of presheaves of sets on $\mcC$ and $\mcC^{\op}$, and explains the universal construction in this case. Non-monoidal universal construction for this setup is exhibited there as well. 

\vspace{0.07in} 

Section~\ref{sec-pseudo} starts by explaining the lifting problem of realizing evaluations via TQFTs, see Section~\ref{subsec_realization}. 
An essential tool in the modern theory of Galois representations is the theory of 
pseudocharacters (or pseudorepresentations)~\cite{Taylor91,Carayol94,Rou-pseudo96,Nyssen96,Dot11,Bella12,Bella21,WW17,Wang13thesis}. Pseudocharacters can be traced back to the turn of the 20th century, when related structures (group determinants) motivated Frobenius to develop the theory of characters of finite groups, see~\cite{Johnson19} and~\cite[Remark 1]{Dot11}. 
We explain that pseudocharacters can be thought of as evaluations $\alpha$ for the Brauer category of a category $\mcC_G$ with a single object $X$ and the group or monoid $G$ of endomorphisms of $X$. Evaluations that are pseudocharacters have an additional property that the antisymmetrizer of $X^{\otimes (d+1)}$ is identically $0$ for some $d$. 
Surprisingly, that is often enough to imply that the evaluation can be realized as the character of a representation. 
A realization of a pseudocharacter as the character of a representation $V$ can be interpreted as the lifting of the corresponding evaluation (or a topological theory) to a one-dimensional TQFT with defects. This can be presented as a diagram in Figure~\ref{figure-A0}. 

\vspace{0.07in} 

\input{figure-A0}

In Section~\ref{subsec_distributed} we consider the distributed case of this correspondence, when $\mcC$ has more than one object. One potential application of the distributed case is to \emph{pseudo-holonomies}, that is, functions on closed paths in a manifold $M$ that behave like traces of the holonomy of a connection on a vector bundle over $M$, see Section~\ref{subsec_holonomies}. We also discuss the case of inner endpoints, so that an evaluation is defined for both decorated circles and intervals, see Section~\ref{subsec_pseudo_boundary}. 

\vspace{0.07in}

In Section~\ref{sec_2D_pseudo} we study pseudo-characters beyond the one-dimensional case. We review two-dimensional TQFTs and their generating functions. A pseudo-character $\alpha$ for the category $\Cob_2$ of two-dimensional cobordisms produces a pseudo-character for the Brauer category with inner endpoints and one unlabelled defect. Additional properties of that pseudo-character (and restricting to fields of characteristic $0$) allow to constraint the generating function of $\alpha$ and show that it coincides with the generating function of some two-dimensional TQFT. The resulting classification of pseudocharacters of $\Cob_2$ is stated in Theorem~\ref{thm_classif}.   

Pseudocharacters for more general monoidal categories may find applications beyond number theory, for instance, in the deformation theory for TQFTs. 

\vspace{0.07in} 

Calling the rigid tensor envelope $\BcC$  the Brauer category of $\mcC$ is motivated by the following example. The familiar oriented Brauer category, see~\cite{BCNR17} for an exposition, is a special case of our construction when $\mcC$ is a category with one object $X$ and a single morphism (the identity morphism of $X$). 
Then endomorphisms of the identity object in the rigid symmetric monoidal category $\BcC$ are powers of the circle $\SS_X$ (a circle decorated by the morphism $\id_X$), see Figure~\ref{figure-D0}. 
One can then pass to the $R$-linear extension $R \BcC$ of $\mcC$ by forming linear combinations of morphisms, and then impose the relation
 $\alpha(\SS_X)=\lambda\in R$ saying that a circle evaluates to $\lambda$, for a commutative ring $R$, often a field. The resulting category is  usually called the oriented Brauer category. 

 The next step is to pass to the negligible quotient of this category, which is equivalent to doing the universal construction for this evaluation $\alpha$. 
When $|\lambda|=n\in \Z_+$, the negligible quotient can be interpreted via the representation category of $\GL(n)$.  

Deligne and Milne constructed a family of rigid, symmetric monoidal categories $\Rep(\GL(\lambda))$, where $\lambda\in\mathbb{C}$, see \cite{DM82,Deligne90}. When $\lambda\not\in\Z$, $\Rep(\GL(\lambda))$ is a semisimple tensor category satisfying a certain universal property. Further developments of the semisimplification of the tilting categories of $\GL(n)$, $\SL(n)$, and $\PGL(n)$ in prime characteristic appear in~\cite{BEEO20,EO22}. 
 
\vspace{0.07in} 

More generally, when $\mcC$ has a single object $X$, the category $\mcC$ can be described by the monoid $G$ of endomorphisms of $X$ and denoted $\mcC_G$. It is this case that relates to pseudocharacters.  
Beyond the topological theory interpretation of pseudocharacters, Brauer categories $\mcB(\mcC_G)$,  together with the interpolation categories $\mcB_{\alpha}(\mcC_G)$, naturally appear in the topological theory and TQFT interpretations of formal rational power series, regular languages and automata, see~\cite{Kh3,IK-top-automata,IK,GIKKL23}.

Brauer categories $\mcB(\mcC_G)$ are an intermediate step in the construction of Frobenius Heisenberg categories in \cite{BSW21} (for that relation one needs to pick a set of generators of the Frobenius algebra, to label the dots on a line). 

In the more restricted case, 
when $\mcC$ is free on a single object $X$ and a generating morphism $x:X\lra X$ (besides the identity morphism), variations on the Brauer category $\BcC$ 
appear as  intermediate categories in various  categorifications of the Heisenberg algebra~\cite{Kho14,BSW20}. Replacing  the symmetric group by the nilHecke algebra, related categories appear in a categorification of the quantum $\mathfrak{sl}(2)$, see~\cite{Lauda10}. 

Brauer categories with inner endpoints (where one allows one-manifolds to end in the middle of a cobordism), considered in Sections~\ref{subsec_Brauer_boundary}, \ref{subsec_presheaves}, and their interpolations generalize the rook Brauer algebra and category~\cite{HdelM,Hu20}, which correspond to the case of $\mcC$ with a single object, single (identity) morphism and its action on one-element sets. Inner endpoints describing morphisms to and from the unit object in monoidal categories also appear, for instance, in~\cite{KS1,Kh3,IK22-linearcase,KT2019}. 


\vspace{0.1in} 

{\bf Acknowledgments:} M.K. and M.S.I. would like to thank Dublin Institute for Advanced Studies and Sergei Gukov for providing productive working environment during a workshop in November 2022 where this work was started. The authors are grateful to Yakov Kononov and Eric Urban for interesting discussions. 
 M.K. would like to acknowledge partial support from NSF grant DMS-2204033 and Simons Collaboration Award 994328. 

%
%

\section{Universal construction in monoidal and non-monoidal settings}
\label{sec-ucm}

Throughout the paper it is assumed that all categories are small, including categories $\mcC$ below. 


\subsection{Universal construction for general monoidal categories}
\label{subsec_general} 

In the universal construction of topological theories, one starts with a monoidal category $\mcC$.  Endomorphisms $\End_{\mcC}(\mathbf{1})$ of the identity object constitute a commutative monoid. One picks a commutative ring or a commutative semiring $R$ and selects a multiplicative map 
\begin{equation}
    \alpha \ : \ \End_{\mcC}(\mathbf{1})\lra R, 
\end{equation}
so that 
\begin{equation}
    \alpha(1) = 1, \hspace{0.50cm}  
    \alpha(xy)=\alpha(x)\alpha(y), \hspace{0.50cm} 
    x,y\in \End_{\mcC}(\mathbf{1}). 
\end{equation}
The category $\mcC$ typically has a set-theoretic or topological origin, and hom spaces $\Hom_{\mcC}(X_1,X_2)$ in it are just sets. One passes to the $R$-linear closure $R\mcC$ of $\mcC$, which is a monoidal category with the same objects as $\mcC$, but morphisms in $R\mcC$ are finite $R$-linear combinations of morphisms in $\mcC$, with the composition given by extending composition in $\mcC$ bilinearly to linear combinations of morphisms. Evaluation $\alpha$ extends $R$-linearly to a homomorphism of commutative semirings, also denoted $\alpha$:  
\begin{equation}
    \alpha \ : \ \End_{R\mcC}(\mathbf{1})\cong R\End_{\mcC}(\mathbf{1})\lra R. 
\end{equation}
Next, define the category $\mcC_{\alpha}$ to be a  quotient category of $R\mcC$, with the same objects as in $R\mcC$ and $\mcC$. 
Two morphisms $f_1,f_2: X_1 \lra X_1' $ in $R\mcC$ are $\alpha$-equivalent if for any objects $X_0, X_2$ and any morphisms  
\begin{equation}
\label{no_label_2}
\threeup  :  \one \lra X_0\otimes X_1 \otimes X_2 , \ \ \ \ 
\threedown : X_0\otimes X_1'\otimes X_2 \lra \one
\end{equation}
in $\mcC$ (or, equivalently, in $R\mcC$)
the following relation holds 
\begin{equation}\label{eq_alpha_0}
  \alpha(\threedown \circ (\id_{X_0}\otimes f_1 \otimes \id_{X_2}) \circ \threeup) \ = \ \alpha(\threedown \circ (\id_{X_0}\otimes f_2 \otimes \id_{X_2}) \circ \threeup),
\end{equation}
see Figure~\ref{fig-001-boundary}. 

\vspace{0.1in}

\input{fig-001-boundary}

\vspace{0.1in} 

In other words, we close up or complete $f_1,f_2$ in the same way to endomorphisms of $\one$ and require that the two closures evaluate to the same element of $R$.  

The category $\mcC_{\alpha}$ is  $R$-linear  monoidal. By an $R$-linear category, we mean a category where hom spaces are $R$-modules and composition of morphisms is $R$-bilinear. 

By the \emph{right state space} $A_{r}(X)$ of $X\in \Ob(\mcC)=\Ob(\mcC_{\alpha})$ we mean 
$\Hom_{\mcC_{\alpha}}(\one,X)$. 
Likewise, define the \emph{left state space} $A_{\ell}(X)$ of $X\in \Ob(\mcC)=\Ob(\mcC_{\alpha})$ as  
$\Hom_{\mcC_{\alpha}}(X,\one)$, so that  
\begin{equation}
A_{\ell}(X) := 
    \Hom_{\mcC_{\alpha}}(X,\one), \hspace{1cm} 
    A_{r}(X) := \Hom_{\mcC_{\alpha}}(\one,X). 
\end{equation}
Our convention for left vs. right state spaces comes from observing that the  composition 
\begin{equation}
\one \stackrel{g}{\longleftarrow}X \stackrel{f}{\longleftarrow} \one
\end{equation}
is written as $gf$, with $g\in \Hom(X, \one)$ on the left of $gf$ and $f\in \Hom(\one, X)$ on the right. 

Any morphism $f:X\lra X'$ in $\mcC$, in $R\mcC$ or in $\mcC_{\alpha}$, induces $R$-linear maps of state spaces 
\begin{equation}
    A_{r}(f): A_{r}(X)\lra A_{r}(X'),  
    \hspace{0.5cm} 
    A_{\ell}(f) : A_{\ell}(X')\lra A_{\ell}(X). 
\end{equation}
Maps $A_{r}(f)$, respectively $A_{\ell}(f)$, are covariant, respectively contravariant, for the composition of morphisms, $A_r(fg) = A_r(f)\circ A_r(g) $,  $A_{\ell}(fg)=A_{\ell}(g)\circ A_{\ell}(f)$.

The collection of state spaces $\{A_{r}(X)\}_{X\in \Ob(\mcC)}$ and maps $A_{r}(f)$, for $f\in \Hom_{\mcC}(X_1,X_2)$, $X_1,X_2\in \Ob(\mcC)$ gives a functor $A_{\alpha,r}$ (or just $A_r$) from $\mcC$ to $R\dmod$, which factors to a functor $\mcC_{\alpha}\lra R\dmod$, also denoted $A_r$. 
This functor 
\[A_r:\mcC\lra R\dmod
\]
describes a topological theory for $\mcC$, a weaker (or lax) version of a TQFT with 
morphisms
\[  
A_{r}(X_1) \otimes_R A_{r}(X_2) \lra  A_{r}(X_1\otimes X_2)
\] 
replacing corresponding isomorphisms in the definition of a TQFT, where $\otimes$ on the right stands for the tensor product in $\mcC$ and $\otimes_R$ on the left denotes the tensor product of $R$-semimodules over a commutative semiring $R$. Likewise, state spaces $A_{\ell}(X)$ and maps $A_{\ell}(f)$ for morphisms $f$ in $\mcC$, defined similarly to $A_r(f)$,  give a contravariant functor $A_{\alpha,\ell}$ (or just $A_{\ell}$) from $\mcC$ to $R\dmod$, which descends to a functor 
\[A_{\ell}:\mcC_{\alpha}\lra (R\dmod)^{\op},
\]
see Figure~\ref{figure-P6} for both functors $A_r,A_{\ell}$. 

\vspace{0.07in}

Since $R$ is commutative, there is no difference between left and right $R$-modules. Section~\ref{subsec_nonmon} discusses the non-monoidal version of the universal construction, for possibly non-commutative $R$, where the target categories for functors $A_{\ell}$ and $A_{r}$ differ by being those of left, respectively right $R$-modules, see Figure~\ref{figure-P6-2}. 


\vspace{0.07in} 

\input{figure-P6}

The pairing 
\begin{equation}\label{eq_pairing}
    (\:\:, \:\:)_X \ : \ A_{\ell}(X)\times A_{r}(X) \lra \End_{\mcC_{\alpha}}(\one) \cong R
\end{equation}
is $R$-bilinear (in the sense of linearity over a commutative semiring $R$) and nondegenerate, in the sense that for different elements $f,f'\in A_{\ell}(X)$ there exists $g\in A_{r}(X)$ with $(f,g)\not= (f',g)$, and likewise for the other coordinate.

\vspace{0.07in} 

\begin{remark} Instead starting with $\mcC$ and then first passing to $R\mcC$, one can start with a monoidal category $\mcC$ which is already $R$-linear and do the quotient construction for it given an $R$-linear evaluation $\alpha$, see also~\cite{Me}. 
\end{remark}

\vspace{0.07in} 

There are important special cases of this construction:

\vspace{0.06in} 

{\bf I.} {\it $R$ is a commutative ring rather than just a semiring.} In this case the terms can be moved to one side of the equation \eqref{eq_alpha_0}. Define $f:X_1\lra X_1'$ to be $\alpha$-equivalent to $0\in \Hom_{R\mcC}(X_1,X_1')$ if 
\begin{equation}\label{eq_alpha_0_2}
    \alpha(\threedown \circ (\id_{X_0}\otimes f \otimes \id_{X_2}) \circ \threeup) \ = 0
\end{equation}
for all objects and morphisms as in  \eqref{no_label_2}.  The collection $\mc{I}$ of morphisms $\alpha$-equivalent to zero morphisms between various pairs of objects is a two-sided ideal in $R\mcC$, and $\mcC_{\alpha}$ is isomorphic to the quotient category $R\mcC/\mc{I}$.

State spaces $A_{\ell}(X), A_r(X)$ are $R$-modules, and the nondegenerate pairing \eqref{eq_pairing} is that of $R$-modules. 

\vspace{0.07in} 

One can pass to the Karoubi closure $\Kar(\mcC_{\alpha})$ by allowing finite direct sums of objects and adding objects for idempotent morphisms. This is especially useful when $R$ is a commutative ring, or, even better, a field, see~\cite{KS3,KKO,KL,Me}. It is not clear how useful passing to the Karoubi closure is when $R$ is only a semiring, not a ring.   

\vspace{0.1in} 

{\bf II.} {\it $\mcC$ is symmetric monoidal.}  
In most papers on the universal construction, the category $\mcC$ is symmetric. Categories of cobordisms tend to be symmetric monoidal, while, for instance, the category of $1$-dimensional cobordisms embedded in $\R^2$ is monoidal but not symmetric, and the universal construction for such categories is investigated in~\cite{KL}.

With $\mcC$ symmetric, objects $X_0,X_2,X_0',X_2'$ can be flipped to one side of each tensor product, so that, for instance, one can reduce to $X_2 = X_2'=\one$. Then maps and the equation in \eqref{no_label_2}-\eqref{eq_alpha_0} simplify to 
\begin{eqnarray}\label{no_label_3}
& & 
\cupbullet : \one \lra X_0\otimes X_1 , \hspace{1cm} 
\capbullet : X_0\otimes X_1'\lra \one,    \\
\label{no_label_4}
& &  \alpha(\capbullet \circ (\id_{X_0}\otimes f_1 ) \circ \cupbullet) \ = \ \alpha(\capbullet \circ (\id_{X_0}\otimes f_2) \circ \cupbullet),
\end{eqnarray}
see Figure~\ref{fig-002-boundary}.
Category $\mcC_{\alpha}$ is then symmetric monoidal as well.

\vspace{0.1in} 

\input{fig-002-boundary}

\vspace{0.1in} 

{\bf III.}  {\it $\mcC$ is rigid monoidal.} 
Category $\mcC_{\alpha}$ is rigid~\cite{EGNO} if $\mcC$ is rigid. In this case morphisms and diagrams in \eqref{no_label_2}, \eqref{eq_alpha_0}  and  Figure~\ref{fig-001-boundary} admit only a slight simplification: the line of $X_0$ in Figure~\ref{fig-001-boundary} can be bend to create a cup and a cap so as to convert $\threedown$ to a morphism $X_1'\otimes X_2\lra X_0^{\ast}$ and convert $\threeup$ to a morphism $X_0^{\ast} \lra X_1\otimes X_2$. 


\vspace{0.1in} 



Rigidity gives isomorphisms of left and right state spaces $A_{\ell}(X)\cong A_{r}(X)$ that are  intertwined by maps $f_{\ell}$ and $(f^{\ast})_r$, for $f\in \Hom_{\mcC}(X_1,X_2)$, where $f^{\ast}:X_2^{\ast}\lra X_1^{\ast}$ is the dual to $f$ morphism coming from the rigid structure. 

\vspace{0.1in}

{\bf IV.}  {\it $\mcC$ is rigid symmetric monoidal.} 
Then the category $\mcC_{\alpha}$ is rigid symmetric monoidal as well. This is the case when $\mcC$ is one of the categories of cobordisms, including categories of $n$-dimensional cobordisms,  cobordisms with defects, graphs or suitable CW-complexes viewed as cobordisms. The universal construction has mostly been studied in this case. 
Condition \eqref{no_label_2} can be further simplified in this case, requiring instead that for any morphism $g:X_1^{\ast}\lra (X_1')^{\ast}$ the two closures have the same $\alpha$-evaluation: 
\begin{eqnarray}\label{no_label_5}
& & 
\tallcupleft_{X_1} : \one \lra X_1\otimes X_1^{\ast} , \ \ 
\tallcapright_{X_1'} : X_1'\otimes (X_1')^{\ast}\lra \one    \\
\label{no_label_6}
& &  \alpha \left(
\tallcapright_{X_1'} \circ (f_1\otimes g ) \circ \tallcupleft_{X_1}\right) \ = \ \alpha\left(\tallcapright_{X_1'} \circ (f_2\otimes g) \circ \tallcupleft_{X_1} \right),
\end{eqnarray}
where $\tallcupleft_X,\tallcapright_X$ are the rigidity morphisms for the object $X$, also see Figure~\ref{fig-003-boundary}.

\vspace{0.1in} 

\input{fig-003-boundary}

\vspace{0.1in} 

If $R$ is, additionally, a ring, see Case {\bf I} above, one can further reduce to a single morphism 
$f:X_1\lra X_1'$ in $R\mcC$ and define for it to be $\alpha$-equivalent to the zero morphism if 
\begin{equation}\label{eq_equiv_rsm} 
     \alpha\left(\tallcapright_{X_1'} \circ (f\otimes g ) \circ \tallcupleft_{X_1}\right) \ = \ 0
\end{equation} 
for any morphism $g:X_1^{\ast}\lra (X_1')^{\ast}$ in $\mcC$. 
After that, one can form the quotient of $R\mcC$ by the ideal $I$ of morphisms $\alpha$-equivalent to zero morphisms. The quotient $R\mcC/I$ is an $R$-linear rigid symmetric monoidal category. 

\vspace{0.07in} 

The three properties: $R$ is a (commutative) ring, $\mcC$ is rigid, $\mcC$ is symmetric monoidal can be imposed independently, resulting in eight cases of the universal construction, with corresponding simplifications in the definition of the quotient category $\mcC_{\alpha}$. Five of these eight cases are considered in {\bf I}-{\bf IV} above. 

\vspace{0.07in} 

Papers on the universal construction for the most part specialize  to categories of $n$-dimensional cobordisms, often for manifolds with various decorations or for CW-complexes with decorations and $n=2$ (foams), see~\cite{BHMV,FKNSWW,Kh2,KS3,KL,KKO} and references therein.
Categories of cobordisms are usually rigid symmetric monoidal. 

Ehud Meir~\cite{Me} aptly renamed the universal construction to \emph{interpolation of monoidal categories}. Meir emphasized that the universal construction can be viewed in the general framework of rigid symmetric monoidal categories. In fact,  conditions that $\mcC$ be rigid and  symmetric can be dropped and one can consider interpolations for any monoidal category, as  explained above.  

Monoidal categories can, informally, be viewed as categories of generalized  one-dimensional cobordisms, especially when a monoidal category is given via generating objects, generating morphisms, and defining relations. Generating morphisms can be depicted as vertices with the ``in''  and ``out'' legs labelled by  generating objects. Such a morphism goes from the tensor product of objects for the ``in'' legs to the tensor product of objects for the ``out'' legs. Monoidal compositions of these morphisms are then decorated directed graphs, a kind of one-dimensional cobordisms.


\subsection{Universal construction in a  non-monoidal setting} 
\label{subsec_nonmon} 

 It is possible to define a version of the universal construction in general categories, not necessarily monoidal. Suppose given a category $\mcC$ and a semiring $R$, not necessarily commutative.  
 Pick sets of objects $S_i$, $i\in I$ and $T_j, j\in J$. We refer to $S_i$'s as \emph{source} objects and to $T_j$'s as \emph{target} objects. For $i\in I,j\in J$, let $\alpha_{ji}$ be a map of sets 
\begin{equation}\label{eq_alphaji}
    \alpha_{ji} : \Hom_{\mcC}(S_i,T_j)\lra R. 
\end{equation}
Denote the union of these maps by $\alpha=(\alpha_{ji})_{i,j}$.

To define the state spaces in this setup, start with the free right $R$-module $\Fr_{r}(X)$ with a basis $\{[f]\}_f$, where $f\in \sqcup_{i\in I}\Hom_{\mcC}(S_i,X)$. 
That is, the basis elements are given by all morphisms from various starting objects $S_i$ to $X$, and an element of $\Fr_r(X)$ is a formal finite linear combination $\sum_k f_k a_k$, $a_k\in R$. Also define the free left $R$-module $\Fr_{\ell}(X)$ with a basis $\{[g]\}_g$, where $g\in \sqcup_{j\in J}\Hom_{\mcC}(X,T_j)$. Its elements are finite linear combinations $\sum_m b_m g_m$, $b_m\in R$. 

Define a pairing 
\begin{equation}
    (\:\:,\:\:)_X \ : \ \Fr_{\ell}(X)\times \Fr_r(X) \lra R 
\end{equation}
by 
\begin{equation}
    \left( [g],[f] \right)_X \ := \ \alpha_{ji}(gf), \ \ f\in \Hom(S_i,X), \ g\in \Hom(X,T_j) 
\end{equation}
and extending bilinearly. For finite linear combinations of $f_k$'s and $g_m$'s with $i$ and $j$ fixed, define  
\begin{equation}
    \left( \sum_m b_m[g_m],\sum_k [f_k] a_k \right)_X \ := \ \sum_{m,k} b_m \alpha_{ji}(g_m f_k) a_k , \ \ f_k\in \Hom(S_i,X), \ g_m\in \Hom(X,T_j) ,
\end{equation}
and then extend by linearity to finite linear combinations of terms with different indices $i$ and different indices $j$. Notice that $b_m, a_k$ and values of evaluations $\alpha_{ji}$ are elements of a semiring $R$, noncommutative in general, and the order of the elements in the product is important. 

Two elements $g_1,g_2\in \Fr_{\ell}(X)$ are called \emph{left $\alpha$-equivalent} if $(g_1,f)=(g_2,f)$ for all $f\in \Fr_r(X)$. Denote by $A_{\ell}(X)$ the left $R$-module of equivalence classes. It is naturally a quotient of the free left $R$-module $\Fr_{\ell}(X)$. 

Two elements $f_1,f_2\in \Fr_{r}(X)$ are called \emph{right $\alpha$-equivalent} if $(g,f_1)=(g,f_2)$ for all $g\in \Fr_{\ell}(X)$. Denote by $A_{r}(X)$ the right $R$-module of equivalence classes. It is naturally a quotient of the free right $R$-module $\Fr_{r}(X)$.

The natural $R$-bilinear pairing 
\begin{equation}
    A_{\ell}(X) \times A_r(X) \lra R
\end{equation}
is nondegenerate. A morphism $f: X_1\lra X_2$ in $\mcC$ induces $R$-linear maps 
\begin{equation} \label{eq_lin_maps}
    A_r(f) : A_r(X_1) \lra A_r(X_2), \hspace{0.5cm} 
    A_{\ell}(f) : A_{\ell}(X_2) \lra A_{\ell}(X_1). 
\end{equation}
Maps $A_r(f)$, over all morphisms $f$ in $\mcC$, define a representation of $\mcC$, a covariant functor 
\begin{equation} 
A_{\alpha,r}=A_{r} \ : \ \mcC\lra \mathrm{mod-}R
\end{equation}
from $\mcC$ to the category of right $R$-modules. The functor assigns the state space $A_r(X)$ to an object $X$ and the map $A_r(f)$ to a morphism $f$. 

Maps $A_{\ell}(f)$, over all morphisms $f$ in $\mcC$, define a representation of $\mcC^{\op}$, a  functor 
\begin{equation}
  A_{\alpha,\ell} = A_{\ell} \ : \ \mcC^{\op}\lra R\dmod  
\end{equation} 
from the opposite category of $\mcC$ to the category of left $R$-modules. This functor assigns the state space $A_{\ell}(X)$ to an object $X$ and map $A_{\ell}(f)$ to a morphism $f$.   

We summarize this as the following statement. 

\begin{proposition}\label{prop_nonmon1} 
The universal construction for the evaluation $\alpha$ as above results in 
\begin{itemize}
\item
state spaces $A_{\ell}(X)$, $X\in\Ob(\mcC)$, which are left $R$-(semi)modules and, via maps \eqref{eq_lin_maps}, assemble into a contravariant functor $A_{\alpha,\ell}: \mcC^{\op}\lra R\dmod$, 
\item 
state spaces $A_{r}(X)$, $X\in\Ob(\mcC)$, which are right $R$-(semi)modules and, via maps \eqref{eq_lin_maps}, assemble into a 
covariant functor  $A_{\alpha,r}: \mcC\lra \mathrm{mod-}R$. 
\end{itemize} 
\end{proposition}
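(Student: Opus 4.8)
The statement packages the constructions carried out just above, so my plan is to verify three things in order: (1) the two $\alpha$-equivalence relations are congruences, so that the quotients $A_{\ell}(X)$ and $A_r(X)$ really are left, resp. right, $R$-(semi)modules; (2) the maps \eqref{eq_lin_maps} are well defined on these quotients and $R$-linear; (3) they are functorial in the stated variances.

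For step (1) I would observe that right $\alpha$-equivalence on $\Fr_r(X)$ is compatible with addition and with the right $R$-action: if $(g,f_1)_X=(g,f_1')_X$ and $(g,f_2)_X=(g,f_2')_X$ for all $g\in\Fr_{\ell}(X)$, then $(g,f_1+f_2)_X=(g,f_1)_X+(g,f_2)_X=(g,f_1'+f_2')_X$ and $(g,f_1 a)_X=(g,f_1)_X\,a=(g,f_1'a)_X$, using only bilinearity of the pairing — crucially, no subtraction is used, which is exactly the point that must be handled with care when $R$ is only a semiring. Hence the classes form a quotient right $R$-(semi)module $A_r(X)$ of $\Fr_r(X)$, and symmetrically $A_{\ell}(X)$ is a quotient left $R$-(semi)module of $\Fr_{\ell}(X)$; the same bilinearity shows the pairing descends to $A_{\ell}(X)\times A_r(X)\to R$ (and it is nondegenerate by the very definition of the two equivalence relations).

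The crux of step (2) is the elementary identity: for $f\in\Hom_{\mcC}(X_1,X_2)$, $h\in\Hom_{\mcC}(S_i,X_1)$ and $g\in\Hom_{\mcC}(X_2,T_j)$,
\[
(g, f\circ h)_{X_2} \ = \ \alpha_{ji}(g\circ f\circ h) \ = \ (g\circ f, h)_{X_1},
\]
immediate from associativity and the definition of the pairing, extended bilinearly to $\Fr_{\ell}$ and $\Fr_r$. I would then define $A_r(f)\colon\Fr_r(X_1)\to\Fr_r(X_2)$ on basis elements by $[h]\mapsto[f\circ h]$ and extend $R$-linearly from the right; since post-composition by $f$ leaves the formal scalar coefficients untouched, this is a map of right $R$-(semi)modules. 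It descends to $A_r$: if $f_1,f_2\in\Fr_r(X_1)$ are right $\alpha$-equivalent, then for every $g\in\Fr_{\ell}(X_2)$ the identity gives $(g,A_r(f)f_1)_{X_2}=(g\circ f,f_1)_{X_1}=(g\circ f,f_2)_{X_1}=(g,A_r(f)f_2)_{X_2}$, so $A_r(f)f_1$ and $A_r(f)f_2$ are right $\alpha$-equivalent. Dually, $A_{\ell}(f)\colon\Fr_{\ell}(X_2)\to\Fr_{\ell}(X_1)$, $[g]\mapsto[g\circ f]$, is a map of left $R$-(semi)modules and descends to $A_{\ell}(X_2)\to A_{\ell}(X_1)$ via the same identity.

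For step (3), $A_r(\id_X)$ is post-composition by $\id_X$, hence the identity on $\Fr_r(X)$ and thus on $A_r(X)$; and for composable morphisms $A_r(f'\circ f)[h]=[(f'\circ f)\circ h]=[f'\circ(f\circ h)]=A_r(f')\bigl(A_r(f)[h]\bigr)$ by associativity, so $A_{\alpha,r}\colon\mcC\to\mathrm{mod-}R$ is a covariant functor. Symmetrically $A_{\ell}(\id_X)=\id$ and $A_{\ell}(f'\circ f)=A_{\ell}(f)\circ A_{\ell}(f')$, so $A_{\alpha,\ell}$ is a functor $\mcC^{\op}\to R\dmod$. I do not expect a genuine obstacle here; the only places that demand attention are the semiring subtlety in step (1) — the quotients must be taken by congruences rather than obtained by subtracting out a submodule — and, when $R$ is noncommutative, keeping the left/right module conventions and the order of scalars in the pairing consistent throughout.
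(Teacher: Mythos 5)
Your proof is correct and fills in exactly the verifications the paper leaves implicit — the paper states the proposition as a summary of the construction immediately preceding it, without a separate argument, so there is no alternative route to compare against. The three points you single out (that the $\alpha$-equivalences are congruences, not submodule quotients, which matters over a semiring; the adjoint identity $(g,f\circ h)_{X_2}=(g\circ f,h)_{X_1}$ making the maps descend; and the bookkeeping of scalar sides) are precisely the content the proposition is packaging.
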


Informally speaking, we probe the category $\mcC$ via source objects $S_i$'s and target objects $T_j$'s and interpolate it via $\alpha$, looking at morphisms from $S_i$ to $T_j$ that factor through various objects $X$. Relations on morphisms to $X$ (and on morphisms from $X$) are introduced through the evaluation $\alpha$.  
When $R$ is a ring (rather than just a semiring), the construction is further simplified and $A_{\ell}(X), A_r(X)$ are (left, respectively right) modules over the ring $R$. 

\vspace{0.07in} 

\input{figure-P6-2}

\vspace{0.1in} 
 
 By analogy with Section~\ref{subsec_general} one can introduce the category $R\mcC$, whose morphisms are $R$-linear combinations of morphisms in $\mcC$. However, when $R$ is not commutative, functors $A_{\alpha,\ell}$ and $A_{\alpha,r}$ do not seem to extend to the category $R\mcC$. They can be extended to $Z\mcC$,  where $Z=Z(R)$ is the center of $R$. Category $Z\mcC$ has the same objects as $\mcC$, while morphisms in it  are finite $Z$-linear combinations of morphisms in $\mcC$. The composition extends $Z$-bilinearly from that in $\mcC$.
 
Evaluations $\alpha_{ji}$ in \eqref{eq_alphaji} extend $Z$-linearly to evaluations $\Hom_{Z\mcC}(S_i,T_j)\lra R$, also denoted $\alpha_{ji}$. 

Next, define $\alpha$-equivalence on morphisms as follows. Elements $f_1,f_2\in \Hom_{Z\mcC}(X_1,X_2)$ are \emph{$\alpha$-equivalent} if for any $i\in I,j\in J$ and morphisms $h_1: S_i\lra X_1,$ $h_2:X_2\lra T_j$, the following equality holds: 
\begin{equation}\label{eq_alpha_nc}
    \alpha_{ji}(h_2 f_1  h_1)\ = \ \alpha_{ji}(h_2 f_2 h_1). 
\end{equation}
Writing these elements as $Z$-linear combination of elements of $\Hom_{\mcC}(X_1,X_2)$,
\begin{equation}
   f_1=\sum_k z_{1,k} f_{1,k},  \hspace{0.5cm} 
   f_2=\sum_{k'} z_{2,k'} f_{2,k'},  \hspace{0.5cm}  
   z_{1,k},z_{2,k'}\in Z,  \hspace{0.5cm} 
   f_{1,k},f_{2,k'} \in \Hom_{\mcC}(X_1,X_2),
\end{equation}
we rewrite equation \eqref{eq_alpha_nc} as 
\begin{equation}\label{eq_alpha_nc_1}
    \sum_k z_{1,k} \,\alpha_{ji}(h_2 f_{1,k}  h_1) \ = \ \sum_{k'} z_{2,k'}\, \alpha_{ji}(h_2 f_{2,k'} h_1). 
\end{equation}
Thus, we compose $f_{1,k},f_{2,k'}$ with a morphism from one of the \emph{starting} objects $S_i$ and, on the other side, with a morphism into one of the \emph{ending} objects $T_j$ to get morphisms $ h_2 f_{1,k}  h_1$ and $ h_2 f_{2,k'} h_1$ from $S_i$ to $T_j$ and then evaluate via $\alpha_{ji}$.

\vspace{0.07in} 

Define $\mcC_{\alpha}$ to be the category with the same objects as $\mcC$, and $\Hom_{\mcC_{\alpha}}(X_1, X_2)$ are $\alpha$-equivalence classes of elements of $\Hom_{Z\mcC}(X_1,X_2)$, that is, finite $Z$-linear combinations of morphisms in $\Hom_{\mcC}(X_1,X_2)$ modulo the equivalence relation built from $\alpha$, see above. Functors $A_{\alpha,\ell}$ and $A_{\alpha,r}$ extend to functors on the category $\mcC_{\alpha}$.

\begin{example}
  Let $\mcC$ be a free category on one object $X$ and a finite set $\Sigma$ of generating morphisms. Then  $\End_{\mcC}(X)$ is the free monoid $\Sigma^{\ast}$ on $\Sigma$. We choose $X$ as the unique source and the unique target object. Finite length words $\omega=a_1\cdots a_n$, $a_i\in \Sigma$ in letters $\Sigma$, are in a bijection with endomorphisms of $X$, and the evaluation function $\alpha$ assigns $\lambda_{\omega}\in R$ to each $\omega\in R$. 
  
  The evaluation $\alpha$ is then encoded in the noncommutative power series, 
  \[
  \alpha=\sum_{\omega \in \Sigma^{\ast}} \lambda_{\omega} \omega, \hspace{0.50cm} 
  \lambda_{\omega}\in R,  
  \]
  with coefficients in $R$.
  
  The state space $A_{\ell}(X)$ is a left $R$-module and carries a right action of the monoid $\Sigma^{\ast}$. Using the opposite monoid $\Sigma^{\ast \op}$ of $\Sigma^{\ast}$, the two actions can be combined into a left action of the monoid algebra $R[\Sigma^{\ast \op}]$. 
  
The state space $A_{\ell}(X)$ is spanned by vectors $\langle \omega |$, over words $\omega\in \Sigma^{\ast}$. This space is a cyclic left $R[\Sigma^{\ast \op}]$-module with the initial vector $\langle \emptyset |$ and a nondegenerate trace map given by $\alpha$. 

If  $A_{\ell}(X)$ is a finitely-generated $R$-module, one says that $R$-valued language $\alpha$ is \emph{recognizable}. Then picking a free $R$-module cover $\widetilde{A}_{\ell}(X)\lra A_{\ell}(X)$ together with a lift of the cyclic vector $\langle \emptyset |$ gives 
a weighted automaton over $\Sigma$ for the $R$-valued language $\alpha$. We refer for details on weighted automata to~\cite[Section 1.6]{BR2} and~\cite{DKV09}. 

When $R=\Bool:=\{0,1|1+1=1\}$ is the Boolean semiring, one recovers both nondeterministic and deterministic finite state automata (FSA) from the state space $A_{\ell}(X)$, see~\cite{IK-top-automata}. The minimal deterministic FSA for the language $L=\alpha^{-1}(1)$ is given by the subset $Q\subset A_{\ell}(X)$ of \emph{pure} states, of the form $\langle \omega |$, where $\omega \in \Sigma^{\ast}$, rather than their Boolean combinations. Minimal nondeterministic FSA are given by taking minimal free $\Bool$-module covers of $A_{\ell}(X)$ and lifting the action of $\Sigma$ and the cyclic vector to them. 

More generally, when $R$ is a commutative semiring, the state spaces $A_{\ell}(X)$ can be extended from a single object $X$ to an entire rigid symmetric monoidal category of one-dimensional $\Sigma$-decorated cobordisms. This extension requires additionally choosing a circular $R$-valued series $\alpha_{\circ}$, to evaluate circles decorated by necklaces of letters from $\Sigma$, see~\cite{IK-top-automata,Kh3}. The original series $\alpha$ evaluates intervals decorated by chains of letters from $\Sigma$. This extension is only possible for commutative $R$, since intervals and circles give generators of the endomorphisms of the identity object of the one-dimensional cobordism category and commute (float past each other). For noncommutative $R$, only a non-monoidal construction above seems possible.
  
In~\cite{IK-top-automata} the universal construction was done on the monoidal category of one-dimensional cobordisms with defects, and the data for $\alpha$ required an additional circular series, also producing a monoidal category. Doing it in the non-monoidal setting, as above, requires only noncommutative power series, recovers the familiar notion of a nondeterministic automaton in the special case $R=\Bool$ but does not give a monoidal category. It does work for an arbitrary noncommutative semiring $R$, unlike the monoidal setting which requires $R$ to be commutative. 
  \end{example}

%
%

\section{Brauer envelopes of categories}
\label{sec_brauer}


\subsection{Brauer envelope of a category.}
\label{subsec_brauer}
Starting with a small category $\mcC$, where objects constitute a set, we construct a rigid symmetric category $\BcC$ which we call  \emph{the Brauer envelope} of $\mcC$. The idea is to start with oriented one-manifolds, viewed as cobordisms between oriented zero-manifolds, decorate zero-manifolds by objects of $\mcC$ and one-manifolds by morphisms in $\mcC$. This further requires allowing dots (zero-dimensional defects) on one-manifolds labelled by objects of $\mcC$, while intervals between these defects are labelled by morphisms, in a compatible way. Such a defect with morphisms on both of its sides can be erased, with the pair of morphisms replaced by their composition. The construction also produces loop (circles) labelled by endomorphisms of objects in $\mcC$, modulo a suitable equivalence relation. 

\smallskip 

More carefully, objects of $\BcC$ are finite sequences $\undX=(X_1^{\varepsilon_1}, \dots, X_n^{\varepsilon_n})$ of pairs $X_i^{\varepsilon_i}=(X_i,\varepsilon_i)$ which are (object of $\mcC$, sign), where a sign $\varepsilon_i\in\{+,-\}$ is thought of as an orientation of a point.  The empty sequence $\emptyset_0$ is the identity object of $\BcC$. The category $\BcC$ is rigid symmetric monoidal with the following generating morphisms: 
\begin{itemize}
\item for each morphism $\beta\in \Hom_{\mcC}(X,Y)$ morphisms 
\[\beta_+:X^+\lra Y^+, \ \ \beta_-: Y^-\lra X^-,  \ \  \beta_{\capright}: X^+\otimes X^-\lra \emptyset_0, \ \ \beta_{\cupright}: \emptyset_0 \lra X^-\otimes X^+,
\] 

\item transposition morphisms 
\[ X^{\varepsilon_1}\otimes Y^{\varepsilon_2}\lra  Y^{\varepsilon_2}\otimes X^{\varepsilon_1}, \ \ \varepsilon_1,\varepsilon_2\in \{+,-\},
\]
\end{itemize} 
see Figure~\ref{figure-A1}. Implicitly, we assume that the two intervals in the diagram of a transposition morphism carry the identity morphisms of $X$ and $Y$.  We then define cup and cap morphisms for the opposite orientation as in Figure~\ref{figure-A2}. 

\input{figure-A1}

\vspace{0.1in} 

\input{figure-A2}

Monoidal composition of these generating morphisms results in diagrams of oriented arcs and circles (oriented one-dimensional cobordisms) with vertices labelled by objects of $\mcC$ and oriented edges labelled by morphisms in $\mcC$ from the source to the target vertex of the edge. The diagrams can be simplified using the composition in $\mcC$: a pair of composable morphisms $\beta:X\lra Y$, $\gamma: Y\lra Z$ corresponds to a decorated one-manifold with an inner vertex labelled $Y$ as in Figure~\ref{figure-A3} and a morphism $X_+\lra Z_+$. We impose the simplification relation allowing to erase that vertex and replace the pair of edges by one edge labelled by the composition $\gamma\beta$, and likewise for the opposite orientation. 

\vspace{0.1in} 

\input{figure-A3}

\input{figure-A4}

A similar simplification is introduced for vertices near cups and caps, see Figure~\ref{figure-A4} for an example. Intervals decorated by identity morphisms can be erased, see Figure~\ref{figure-A5} on the left. Figure~\ref{figure-A5} on the right shows a rigidity isomorphism, involving cup and cap labelled by the identity morphism. 

\input{figure-A5}

\vspace{0.07in} 

We impose the relations that 
 transposition morphisms together with the identity cup and cap morphisms $\id_{X,\cupleft},\id_{X,\cupright}$ and $\id_{X,\capleft},\id_{X,\capright}$ make $\BcC$ into a rigid symmetric monoidal category. In particular, this means that all intersections are ``virtual'', lines and circles can freely slide through each other and through dots (defects) on other lines and circles labelled by objects of $\mcC$, and all ``isotopies'' of diagrams are allowed. 

\begin{prop} Category $\BcC$ is a strict rigid symmetric monoidal category and contains $\mcC$ as a subcategory. The natural inclusion functor $\iota_{\mcC}: \mcC\lra \BcC$ is universal for functors from $\mcC$ to strict rigid symmetric monoidal categories. 
\end{prop}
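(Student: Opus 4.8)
\emph{Proof plan.} The three assertions are handled in turn, and the first and third both lean on the second. First I would make precise the sense in which $\BcC$ is ``defined by generators and relations'': it is the strict monoidal category presented by the monoidal signature with generating objects $X^{+},X^{-}$ (for $X\in\Ob(\mcC)$) and generating morphisms $\beta_{+},\beta_{-}$, the cup and cap morphisms $\beta_{\cupright},\beta_{\capright}$ in their orientation variants, and the transpositions, subject to the congruence generated by the relations depicted in Figures~\ref{figure-A1}--\ref{figure-A5} together with the relations declaring that the transpositions and the identity cups and caps form a rigid symmetric structure. Since the objects of $\BcC$ form the free monoid on $\Ob(\mcC)\times\{+,-\}$, the monoidal structure is strict; the symmetry and rigidity axioms are literally among the imposed relations, so $\BcC$ is strict rigid symmetric monoidal, the dual of $X_{1}^{\varepsilon_{1}}\otimes\cdots\otimes X_{n}^{\varepsilon_{n}}$ being $X_{n}^{-\varepsilon_{n}}\otimes\cdots\otimes X_{1}^{-\varepsilon_{1}}$ with the nested cup and cap assembled from identity (co)evaluations. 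Finally, $\iota_{\mcC}(X):=X^{+}$ and $\iota_{\mcC}(\beta):=\beta_{+}$ is a functor precisely because of the vertex-erasing relation of Figure~\ref{figure-A3} (respect for composition) and the identity-interval relation on the left of Figure~\ref{figure-A5} (respect for identities); faithfulness of $\iota_{\mcC}$ is postponed.

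\emph{The universal property.} Let $\mathcal{D}$ be strict rigid symmetric monoidal, fix a duality $(-)^{\ast}$ with evaluations $\ev$ and coevaluations $\coev$ on $\mathcal{D}$, and let $F:\mcC\lra\mathcal{D}$ be any functor. Define $\widetilde{F}:\BcC\lra\mathcal{D}$ on objects by $X^{+}\mapsto F(X)$ and $X^{-}\mapsto F(X)^{\ast}$, extended strictly monoidally, and on generators by $\beta_{+}\mapsto F(\beta)$, $\beta_{-}\mapsto F(\beta)^{\ast}$, the cup and cap generators to the evident composites of $F(\beta)$ with $\coev$, resp.\ $\ev$, and the transpositions to the symmetry of $\mathcal{D}$. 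The substance of this part is that $\widetilde{F}$ respects every defining relation: the functoriality relations map to functoriality of $F$ and of $(-)^{\ast}$; the definitions of the opposite-orientation cups and caps and the curl-removal relations of Figures~\ref{figure-A2} and~\ref{figure-A4} map to the defining relations and naturality of the rigid and symmetric structure of $\mathcal{D}$; the snake relation on the right of Figure~\ref{figure-A5} maps to the triangle identities; and the symmetric-monoidal axioms for the transpositions, their naturality against the $\beta_{\pm}$, and the compatibility of duality with the symmetry all hold in $\mathcal{D}$ because $\mathcal{D}$ is itself strict rigid symmetric monoidal. These are direct verifications. By construction $\widetilde{F}$ is strict monoidal, preserves the symmetry and the duality, and satisfies $\widetilde{F}\circ\iota_{\mcC}=F$. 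Uniqueness: any duality-preserving symmetric monoidal $G$ with $G\circ\iota_{\mcC}=F$ is forced on objects ($G(X^{-})$ being a dual of $G(X^{+})=F(X)$) and then on every generator, so it equals $\widetilde{F}$ up to the unique monoidal natural isomorphism induced by the choice of duals in $\mathcal{D}$.

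\emph{$\mcC$ is a subcategory.} By the universal property it suffices to produce one faithful functor from $\mcC$ to a strict rigid symmetric monoidal category: if $F_{0}:\mcC\lra\mathcal{D}$ is faithful, then $F_{0}=\widetilde{F_{0}}\circ\iota_{\mcC}$ forces $\iota_{\mcC}$ to be faithful. For this I would take $\mathcal{D}$ to be a strictification of the compact closed (hence rigid) symmetric monoidal category $\mathsf{Rel}$ of sets and relations under cartesian product (or the small full rigid subcategory it generates), with $F_{0}(X):=\coprod_{Z\in\Ob(\mcC)}\Hom_{\mcC}(Z,X)$ and $F_{0}(\beta)$ the relation $\{(\phi,\beta\phi)\}$; composition of relations reproduces composition in $\mcC$, identities go to identity relations, and reading off the pair with $\phi=\id_{X}$ shows $F_{0}$ separates morphisms. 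Alternatively one can skip the universal property here and build the concrete category of $\mcC$-decorated oriented Brauer diagrams together with a monoidal isomorphism onto $\BcC$, which gives faithfulness directly.

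\emph{Where the work is.} The only step that is not pure bookkeeping is the coherence statement implicit in the first paragraph: that declaring ``the transpositions and identity cups/caps make $\BcC$ rigid symmetric monoidal'' genuinely makes every spherical isotopy of a $\mcC$-decorated oriented Brauer diagram, modulo the single extra move of fusing composable $\mcC$-labels on adjacent strand segments, a consequence of the finitely many stated axioms. This is the $\mcC$-decorated analogue of the classical coherence theorem for rigid symmetric monoidal categories; carrying it out yields the normal form for morphisms of $\BcC$ (an underlying oriented Brauer diagram, a $\mcC$-morphism on each strand, and a finite multiset of $\mcC$-decorated circles taken up to trace-equivalence) and, as a byproduct, the self-contained proof of faithfulness mentioned above. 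Everything else reduces to unwinding the presentation and invoking the corresponding axiom in the target.
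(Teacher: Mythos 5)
Your proposal follows the same skeleton as the paper's two-sentence proof (define $\iota_{\mcC}$ on objects and morphisms by $X\mapsto X^{+}$, $\beta\mapsto\beta_{+}$; extend any $F:\mcC\lra\mathcal{D}$ over the generators and verify the relations), but it supplies substantive detail that the paper leaves implicit, and it adds one argument not in the paper at all. The paper simply asserts that $\iota_{\mcC}$ is faithful; you give two independent routes to it. The first, deducing faithfulness from the universal property applied to a concrete faithful symmetric monoidal $F_{0}:\mcC\lra\mathsf{Rel}$ with $F_{0}(X)=\coprod_{Z}\Hom_{\mcC}(Z,X)$, is a clean Yoneda-style trick and is sound (one does need, as you note, to restrict to a small rigid subcategory of $\mathsf{Rel}$ and strictify, since $\mathsf{Rel}$ is neither small nor strict). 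The second route---reading faithfulness directly off the normal form for decorated Brauer diagrams---is closer in spirit to what the paper implicitly relies on, since the paper treats the decorated-cobordism description of morphisms as the primary definition of $\BcC$. You also flag honestly the one genuinely non-bookkeeping step, the decorated coherence/normal-form theorem reconciling the generators-and-relations presentation with the diagrammatic description; the paper does not address this either, so neither account is fully self-contained on that point, but your version has the merit of locating exactly where the work lives. One small caveat worth surfacing more sharply: the uniqueness in the universal property is strict only if $\mathcal{D}$ carries a \emph{chosen} duality and one requires $G$ to preserve that choice on the nose; otherwise, as you indicate, uniqueness holds only up to the canonical monoidal natural isomorphism comparing duals, and the paper's phrase ``canonical and unique'' quietly assumes the chosen-duality convention.
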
 

\begin{proof}  The faithful functor $\iota_{\mcC}:\mcC\lra \BcC$ assigns to $X\in \Ob(\mcC)$  the object $X^+$ of $\BcC$ and to a morphism $\beta$ in $\mcC$ the morphism $\beta_+$ in $\BcC$.  A functor $F:\mcC\lra \mathcal{D}$ to a strict rigid symmetric monoidal category $\mathcal{D}$ admits a canonical and unique extension to a rigid symmetric monoidal functor $\mathcal{B}(F):\BcC\lra \mathcal{D}$.  
\end{proof} 

Consider loops in $\BcC$. These are endomorphisms of the identity object $\one$ of $\BcC$ given by a circle with a finite set of objects $X_1,\dots, X_n\in \Ob(\mcC)$ placed on it in that order along the orientation direction and morphisms $\beta_i:X_i\lra X_{i+1},$ $X_{n+1}=X_1$ placed along the edges, see Figure~\ref{figure-A7}. 
Removing all but one vertex on a circle reduces us to an object $X\in \Ob(\mcC)$ with an endomorphism $\beta:X\lra X$.

\input{figure-A7}

\vspace{0.1in} 

 For a small $\mcC$ as above, define the set of loops $\LcC$ of $\mcC$ as the set of equivalence classes of pairs $(X,\beta)$, $\beta\in\End_{\mcC}(X)$ modulo the equivalence relation 
\begin{equation}
\label{equation:equivLcC}
(X,\gamma\beta)\sim (Y,\beta\gamma), \hspace{0.5cm} 
\mbox{for} \  \beta:X\lra Y, \hspace{0.5cm} 
\gamma:Y\lra X  
\end{equation} 
explained in Figure~\ref{figure-A6}. 
These equivalence classes correspond to equivalence classes of circles in $\End_{\BcC}(\one)$, which are generators of the latter commutative monoid. 

\begin{remark} The set $L(\mcC)$ can be thought of as the \emph{trace} of the category $\mcC$, see formula (1.1) in~\cite{BPW19}.
It is similar to the \emph{vertical trace} in monoidal categories, see~\cite[Section 5.5]{GHW22} and~\cite{BGHL14,BHLZ17,EL16}. 
 It can also be defined as coend:  
\[ 
L(\mcC)=\int^C\Hom(C,C), 
\] 
see \cite{Loregian21}. 
\end{remark} 

\input{figure-A6}

\begin{prop}  Endomorphisms of the identity object $\End_{\BcC}(\one)$ in $\BcC$ is a free commutative monoid on generators in the set $\LcC$. Composition of endomorphisms of $\one$ corresponds to the disjoint union of decorated circles, while the empty diagram corresponds to the identity endomorphism in $\End_{\BcC}(\one)$. 
\end{prop}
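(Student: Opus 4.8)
The plan is to identify $\End_{\BcC}(\one)$ with $\Lambda$, the free commutative monoid on the set $\LcC$, realized concretely as the set of finite multisets of elements of $\LcC$ with operation multiset union. First I would describe endomorphisms of $\one$ diagrammatically. Among the generating morphisms of $\BcC$ -- the decorated dots $\beta_\pm$, the transpositions, and the cups and caps of Figures~\ref{figure-A1}--\ref{figure-A2} -- only the cups and caps change the number of strands, and they do so in pairs; hence a morphism $\one\to\one$, presented as a composite of generators tensored with identities, has no strands at its top or bottom, so its underlying decorated oriented $1$-manifold is a disjoint union of circles, each carrying a cyclic sequence of object-labelled dots joined by morphism-labelled arcs. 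Using the vertex-erasing relation of Figure~\ref{figure-A3}, its versions near cups and caps (Figures~\ref{figure-A4}--\ref{figure-A5}), and deletion of identity-labelled arcs, each connected circle equals, in $\BcC$, a circle with a single dot labelled by some $X\in\Ob(\mcC)$ and a single arc labelled by an endomorphism $\beta\in\End_{\mcC}(X)$; by the relation of Figure~\ref{figure-A6} the pair $(X,\beta)$ is well defined only up to the equivalence~\eqref{equation:equivLcC}, i.e. determines a well-defined element of $\LcC$.

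This already produces a map $\Phi\colon\Lambda\to\End_{\BcC}(\one)$ sending a multiset of loop classes to the disjoint union of the corresponding one-dot circles. It is surjective by the reduction just described; it is a monoid homomorphism because disjoint union of closed diagrams is exactly their monoidal product, which on $\End_{\BcC}(\one)$ coincides with composition -- the two operations share the unit $\id_\one$ and satisfy the interchange law, so they agree and are commutative (Eckmann--Hilton) -- and the empty diagram is $\id_\one$, matching the empty multiset. This settles at once the statements that composition of endomorphisms of $\one$ is disjoint union of decorated circles and that the empty diagram is the identity; what remains is injectivity of $\Phi$, that is, that the defining relations of $\BcC$ force no further identifications.

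For injectivity I would exhibit an explicit left inverse. To a closed diagram $D$ (a composite of generators with empty source and target) assign $\mu(D)\in\Lambda$, the multiset of $\LcC$-classes of the connected components of $D$, each reduced as above. I claim $\mu$ descends to $\End_{\BcC}(\one)$. Since the hom-sets of a monoidal category presented by generators and relations are formal composites modulo the tensor congruence generated by the stated relations, it suffices to check that $\mu$ is unchanged when one side of a defining relation is replaced by the other inside a closed diagram: the vertex-composition and identity-deletion relations (Figures~\ref{figure-A3}--\ref{figure-A5}) act within a single component and only compose or delete consecutive arc labels, leaving its cyclic word fixed up to~\eqref{equation:equivLcC}; the transposition relations and the curl-removal/rigidity relations (Figures~\ref{figure-A2} and~\ref{figure-A5}, right) are ``virtual'' isotopies that neither merge nor split components and that insert only identity-labelled cup--cap pairs, which contribute trivially to a component's label. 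Hence $\mu$ is well defined on $\End_{\BcC}(\one)$, and $\mu\circ\Phi=\id_\Lambda$, so $\Phi$ is a bijection and $\End_{\BcC}(\one)\cong\Lambda$ is the free commutative monoid on $\LcC$. (In particular $\mu$ distinguishes a single circle from the empty diagram, consistent with the circle being a free generator, as in the classical oriented Brauer category.)

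The step I expect to be the main obstacle is the last one: verifying that the loop-class multiset $\mu$ is genuinely invariant under every defining relation in every context. The delicate relations are those in which cups, caps and transpositions interact, since an ``isotopy'' move could a priori reconnect arcs and move object labels between components; one must check that the curl-removal relations of Figure~\ref{figure-A2} and the rigidity relation of Figure~\ref{figure-A5} insert only a contractible, identity-decorated loop, and that the simplification moves near cups and caps of Figure~\ref{figure-A4} never alter the cyclic word of a component. Once local invariance under each generating relation is in hand, compatibility with composition and tensor product is automatic and the proof concludes.
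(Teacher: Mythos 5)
The paper states this proposition without an explicit proof; the preceding discussion (defining $\LcC$ and reducing a decorated circle to single-vertex form via Figures~\ref{figure-A3}--\ref{figure-A7}) is all it offers, and your argument fills that gap along exactly the lines the paper intends. Your reduction of each closed component to a single-dot circle, the identification with a class in $\LcC$, and the observation that composition equals disjoint union via Eckmann--Hilton match the implicit reasoning; the explicit freeness verification via the loop-class invariant $\mu$ is correct and is precisely the piece the paper leaves unstated.
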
 

\vspace{0.1in}

More generally, we can describe morphisms $\Hom_{\BcC}(\undX,\underline{Y})$ between arbitrary objects in $\BcC$. Using the rigid structure, it suffices to describe 
$\Hom_{\BcC}(\one,\undX)$, for an arbitrary sequence $\undX$. These homs are in a bijection with the following triples: 
\begin{itemize}
\item a sign-reversing pairing on elements in the sequence $\undX$, 
\item a morphism in $\mcC$ from the $-$ to the $+$ object in each pair,
\item an endomorphism of $\one$ in $\BcC$. 
\end{itemize} 
Figure~\ref{figure-B1} shows an example of such morphism. In general, a morphism in $\BcC$ is a collection of disjoint arcs and circles. Arcs constitute an orientation-respecting matching of terms in the tensor product of the source and target objects of the morphism, with a morphism in $\BcC$ assigned to each arc, and circles are loops in $\mcC$. 

\vspace{0.1in} 

\input{figure-B1}

\vspace{0.1in} 

{\it Poincar\'e dual description.} The Brauer category $\BcC$ can also be described via the Poincar\'e dual diagrammatics, where now objects correspond to arcs and morphisms to dots (to defects on lines and circles). Start with the presentation of a morphism of $\BcC$, place a dot in the middle of each interval and label it by the corresponding morphism. Then erase the original vertices at the endpoints and original dots inside the cobordism. Intervals in this Poincar\'e dual decomposition of the same one-manifold are labelled by objects of $\mcC$. An example of a morphism and its Poincar\'e dual presentation is shown in Figure~\ref{figure-B2}. 

\vspace{0.1in} 

\input{figure-B2}

\vspace{0.1in} 

Adjacent dots on an interval or circle may be merged to the composition of morphisms, see Figure~\ref{figure-B3}. 

\vspace{0.1in} 

\input{figure-B3}

\vspace{0.1in} 

A dot labelled by the identity morphism may be erased. With this convention, a dotless circle labelled by an object $X$ denotes the loop $(X,\id_X)$ in $\mcC$. 

\begin{remark}\label{remark_PD} 
This Poincar\'e dual description is the one used in the papers~\cite{Kh3,IK-top-automata,IK} when $\mcC$ has a single object with its endomorphism monoid being the free monoid $\Sigma^{\ast}$ on a finite set $\Sigma$. A morphism in $\mcC$ can then be written uniquely as a product of generating morphisms in $\Sigma$. In those papers, dots on lines and circles are labelled by elements of $\Sigma$, thus each line carries a morphism, the product of corresponding elements of $\Sigma$, and each circle carries a loop in $\mcC$, an element of the free monoid $\Sigma^{\ast}$ up to rotational equivalence. 
\end{remark} 

\begin{remark} In our construction of the Brauer envelope, we simply added the duals of all objects and allowed to ``bend'' morphisms to form the rigid symmetric monoidal closure  of $\mcC$. A more subtle question is addressed in~\cite{Coulembier21}, where the authors start with a monoidal (and not necessarily symmetric) category and study how to dualize one or more objects in it. 
\end{remark} 


\subsection{Universal construction for Brauer envelopes}
\label{subsec_uc_Brauer}
To do the universal construction for the rigid tensor category $\BcC$ we pick a commutative semiring $R$ and a multiplicative evaluation 
\begin{equation}\label{eq_alpha_ev}
\alpha: \End_{\BcC}(\one) \lra R. 
\end{equation} 
These evaluations are in a bijection with maps of sets 
\begin{equation}
\alpha : \LcC \lra R, 
\end{equation} 
where we specify the element of $R$ assigned to each loop in $\mcC$ (to each equivalence class of decorated circles in $\mcC$) and then extend multiplicatively to a homomorphism in \eqref{eq_alpha_ev} from the free monoid to $R$. See Figure~\ref{figure-B0}.

\input{figure-B0}

Evaluation $\alpha$ allows to do a universal construction on $\BcC$. One first passes to $R$-linear combinations of morphisms in $\BcC$ to get the category $R\BcC$. Then  one mods out by equivalence relations, see Case {\bf IV} in Section~\ref{subsec_general} and \eqref{eq_equiv_rsm}.  
The resulting category, denoted $\mcB_{\alpha}(\mcC)$, is a rigid symmetric monoidal $R$-linear category, with the same objects as $\BcC$. 

Thus, we obtain a diagram of four categories and three functors: 
\begin{equation}
\mcC \stackrel{\iota_{\mcC}}{\lra} \BcC \lra R\BcC \lra \mcB_{\alpha}(\mcC),
\end{equation} 
with the second, third and fourth categories rigid symmetric monoidal and the third and fourth categories, additionally, $R$-linear. 

\vspace{0.07in} 

{\it Functoriality.}  Given a functor $\mcF:\mcC_1\lra\mcC_2$, there is an induced rigid symmetric monoidal functor $\mcB(\mcF):\mcB(\mcC_1)\lra \mcB(\mcC_2)$. An $R$-evaluation $\alpha: L(\mcC_2)\lra R$ for the category $\mcC_2$ induces evaluation $\alpha_{\mcF}: L(\mcC_1)\lra R$ given by composing $\alpha$ with $\mcF$ applied to loops in $\mcC_1$. Functor $\mcF$ then induces an $R$-linear rigid symmetric functor, which can be denoted 
\[
\mcF_{\alpha}: \mcB_{\alpha_{\mcF}}(\mcC_1)\lra \mcB_{\alpha}(\mcC_2). 
\] 

\vspace{0.07in} 

{\it Examples.} As a special case, let $\mcC$ be a category with one object and one morphism (the identity morphism). Then $\BcC$ is the category of oriented 1-cobordisms between oriented 0-manifolds (with objects enumerated by sign sequences). The Brauer category in this case is given by picking $\lambda\in R$ (one assumes that $R$ is a field or, at least, a commutative ring) and evaluating a circle to $\lambda$. The resulting category is the usual \emph{oriented Brauer category}, see~\cite{Rey15}.

The quotient category $\mcB_{\lambda}(\mcC)$ of the oriented Bruaer category by the universal construction with circle evaluating to $\lambda$ is also known as \emph{the negligible quotient} of the Brauer category. 

\vspace{0.07in} 

To expand on Remark~\ref{remark_PD} above, 
if $\mcC$ has only one object, $\Ob(\mcC)=\{X\}$, the category $\mcC$ is described by the monoid $M=\End_{\mcC}(X)$ of endomorphisms of $X$. To match with the constructions of~\cite{Kh3,IK-top-automata,IK}, consider the Poincar\'e dual presentation of $\BcC$. All intervals in a morphism in $\BcC$ are labelled by the same object, so this labelling can be omitted. Dots (defects) are labelled by elements of $M$. If $\{m_i\}_{i\in I}$ are generators of $M$, one can reduce to labeling dots by $m_i$'s, subject to whatever relations hold in $M$. The identity morphism is unlabelled and does not require a dot.

Furthermore, if $M\cong \Sigma^{\ast}$ is a free monoid on a set $\Sigma$, then dots are decorated by elements of $\Sigma$, with no relations on concatenations of dots. 

\begin{itemize}
\item When $R$ is a field, the corresponding category is considered in~\cite[Section 2.4]{Kh3} (one further forms the Karoubi closure of $\mcB_{\alpha}(\mcC)$, see also~\cite{KS3}), as the universal construction for field-valued evaluations of circles with defects. This category (and its generalization when cobordism may have inner points, see Section~\ref{subsec_Brauer_boundary}) was further investigated in~\cite{IK}, also with $R$ a field. In particular, a rational evaluation $\alpha$ in this case gives rise to a symmetric Frobenius algebra and a two-dimensional TQFT restricted to thin flat surfaces.  
\item In~\cite{IK-top-automata} the authors consider the case when $R=\Bool=\{0,1|1+1=1\}$ is the Boolean semiring and pick evaluations of intervals and circles decorated by words, respectively cyclic words, in letters in $\Sigma$. This is given by a regular language $L_{\I}\subset \Sigma^{\ast}$ and a regular circular language $L_{\circ}$. A special case of that construction, when the language $L_{\I}$ is empty, results in the interpolation of the Brauer category as above via the circular language $L_{\circ}$ and produces a $\Bool$-linear rigid symmetric monoidal category with finite hom spaces, see~\cite{IK-top-automata}. For the more general case of an arbitrary regular $L_{\I}$ see also the comment following equation \eqref{eq_two_maps} in  Section~\ref{subsec_Brauer_boundary}. 
\end{itemize}


\subsection{Brauer envelopes with boundary}
\label{subsec_Brauer_boundary} 
A left ideal $\mcI$ of a category $\mcC$ is a collection of morphisms in $\mcC$ closed under left composition with morphisms in $\mcC$: 
\begin{equation}\label{eq_def_left_id}
f\in \mcI, \ 
f \in \Hom_{\mcC}(X,Y), \hspace{0.5cm}
g\in \Hom_{\mcC}(Y,Z) \  
\Rightarrow \ gf \in \mcI.   
\end{equation} 
An example of a left ideal in a small category $\mcC$ is given by taking a collection $\mathcal{U}$ of objects of $\mcC$ and defining the ideal to consist of all morphisms in $\mcC$ with the source object in $\mcC$. Denote this ideal by  
\begin{equation}\label{eq_left_ideal}
\mcI_{\mathcal{U}}^0 := \bigcup_{X\in\, \mathcal{U}, Y\in \Ob(\mcC)} \Hom_{\mcC}(X,Y). 
\end{equation} 
Likewise, 
a right ideal $\mcI$ of a category $\mcC$ is a collection of morphisms in $\mcC$ closed under right composition with morphisms in $\mcC$: 
\begin{equation}\label{eq_def_right_id} 
f\in \mcI, \hspace{0.25cm}
f \in \Hom_{\mcC}(X,Y), \hspace{0.25cm}
g\in \Hom_{\mcC}(Z,X)
\ \Rightarrow \ fg\in \mcI.   
\end{equation} 
Take a collection $\mathcal{U}$ of objects of $\mcC$ and define the right ideal to consist of all morphisms in $\mcC$ with the source object in $\mcC$: 
\begin{equation} \label{eq_right_ideal} 
\mcI_{\mathcal{U}}^1 := \bigcup_{X\in \, \mathcal{U}, Y\in \Ob(\mcC)} \Hom_{\mcC}(Y,X). 
\end{equation} 

Suppose given a small category $\mcC$ with a left ideal $\mcI^{0}$ and a right ideal $\mcI^1$. 
Define a monoidal category $\mcB=\mcB(\mcC,\mcI^0,\mcI^1)$ as follows. 
It has the same objects of $\BcC$. 
The morphisms in $\BcC$ and their diagrammatics are enhanced by allowing one-dimensional cobordisms to end in the middle (to have boundary points that do not correspond to terms in the source or target object of a morphism). 
These endpoints are called \emph{inner} or \emph{loose} endpoints or boundaries of the  cobordism. 
The interval at an `in', respectively `out', inner boundary point is labelled by a morphism in the right ideal $\mcI^1$, respectively in the left ideal $\mcI^0$, see Figure~\ref{figure-B4}.   

\vspace{0.1in} 

\input{figure-B4}

Composition rules for morphisms extend from $\BcC$ to $\mcB$ as shown in Figure~\ref{figure-B5}. Endpoints of a one-cobordism can be bend arbitrarily, providing a rigid structure on $\mcB$. As for the category $\BcC$, the intersections are virtual and inner endpoints can slide through intersections. 

\vspace{0.1in} 

\input{figure-B5}

\vspace{0.1in}

\begin{prop} \label{prop_ideals}  For a small category $\mcC$ and left and right ideals $\mcI^0,\mcI^1$ in it, as above, the 
category $\BcC=\mcB(\mcC,\mcI^0,\mcI^1)$ is a strict rigid symmetric monoidal category and contains $\mcC$ as a subcategory. The endomorphism monoid $\End_{\BcC}(\one)$ of the identity object is a free abelian monoid on the disjoint union 
\[
\LcC \sqcup (\mcI^0\cap \mcI^1) 
\]
of the set of loops in $\mcC$ and the set of elements in the intersection $\mcI^0\cap \mcI^1$ of the two ideals. 
\end{prop}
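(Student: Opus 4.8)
Write $\mcB=\mcB(\mcC,\mcI^{0},\mcI^{1})$ for the category of the statement (the one denoted $\BcC$ there); throughout this proof we keep $\BcC$ for the plain Brauer envelope $\mcB(\mcC)$ of Section~\ref{subsec_brauer}, which is the special case $\mcI^{0}=\mcI^{1}=\emptyset$. The plan is to run the proofs of the two structural propositions about $\BcC$ above essentially verbatim, isolating the one new feature: closed components of a morphism that carry inner endpoints at both ends. For the first half — that $\mcB$ is strict rigid symmetric monoidal and contains $\mcC$ — note that the generators (the arc morphisms $\beta_{\pm}$, the cup and cap morphisms attached to morphisms $\beta$ of $\mcC$, the four transpositions, and the new half-arcs ending at an inner endpoint and labelled by a morphism of $\mcI^{1}$ at an ``in'' endpoint, resp.\ of $\mcI^{0}$ at an ``out'' endpoint) together with the relations of Figures~\ref{figure-A1}--\ref{figure-A5} and~\ref{figure-B5} are a presentation of a strict rigid symmetric monoidal category: the transpositions and identity cups and caps supply the symmetric and rigid structure exactly as for $\BcC$, and the remaining relations only merge two adjacent defects into their composite in $\mcC$, erase identity defects, and slide strands and inner endpoints through virtual crossings. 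Faithfulness of $\iota_{\mcC}\colon\mcC\to\mcB$, $X\mapsto X^{+}$, $\beta\mapsto\beta_{+}$, follows as for $\BcC$: in the normal form described below, a morphism $X^{+}\to Y^{+}$ is a single vertical arc carrying one morphism of $\mcC$.

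For the structure of $\End_{\mcB}(\one)$: a morphism $\one\to\one$ is represented by a decorated compact $1$-manifold with boundary all of whose boundary points are inner endpoints (nothing lies over the source or target object $\emptyset_{0}$), hence by a disjoint union of circles and of arcs both of whose endpoints are inner. By the merging and identity-erasing relations each circle component reduces, exactly as in the proof that $\End_{\BcC}(\one)$ is free commutative on $\LcC$, to a circle with a single defect, i.e.\ to a pair $(X,\beta)$ with $\beta\in\End_{\mcC}(X)$, well defined up to the equivalence~\eqref{equation:equivLcC}, and so contributes an element of $\LcC$ (cf.\ Figure~\ref{figure-A7}). Each arc component with two inner endpoints reduces, after merging all of its defects, to an interval carrying a single morphism $\phi$ of $\mcC$; since the defect at the source endpoint of that arc lies in $\mcI^{0}$ and the defect at the sink endpoint lies in $\mcI^{1}$, and in the reduced diagram both of these are $\phi$ itself, we get $\phi\in\mcI^{0}\cap\mcI^{1}$ (equivalently: $\phi$ is a postcomposition of an element of the left ideal $\mcI^{0}$, hence in $\mcI^{0}$, and a precomposition of an element of the right ideal $\mcI^{1}$, hence in $\mcI^{1}$). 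Conversely every element of $\LcC$ and every $\phi\in\mcI^{0}\cap\mcI^{1}$ is realised by such a component, and composition of endomorphisms of $\one$ is disjoint union of the corresponding decorated manifolds, which is commutative with the empty diagram as unit.

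It remains to see that $\End_{\mcB}(\one)$ is genuinely \emph{free} abelian on $\LcC\sqcup(\mcI^{0}\cap\mcI^{1})$, i.e.\ that two reduced diagrams — disjoint unions of one-defect circles (labels in $\LcC$) and one-morphism floating arcs (labels in $\mcI^{0}\cap\mcI^{1}$) — represent the same morphism only when they have the same multiset of components; I expect this to be the main obstacle, exactly as it is for $\BcC$. One handles it either by checking that the rewriting system (merge adjacent defects, erase identity defects, apply the symmetric and rigid isotopies) is terminating and confluent, so that the reduced form is a well-defined invariant, or — more cleanly — by constructing $\mcB$ directly from this normal form: objects are sign sequences, and a morphism $\undX\to\underline{Y}$ is a sign-respecting partial matching of the terms of $\undX$ and $\underline{Y}$ decorated by a morphism of $\mcC$ on each matched pair, an $\mcI^{0}$- or $\mcI^{1}$-label (according to orientation) on each arc running to an inner endpoint, and a finite multiset over $\LcC\sqcup(\mcI^{0}\cap\mcI^{1})$; composition is defined combinatorially — compose matchings, compose the $\mcC$-labels along strands, and collect the circles and floating arcs thereby created — after which one verifies associativity, the strict rigid symmetric monoidal axioms, and the universal property characterizing $\mcB$. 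Either way the number of arc-components versus circle-components and the decorations are invariants of a morphism, so no relation identifies distinct multisets, whence $\End_{\mcB}(\one)\cong\N^{(\LcC\sqcup(\mcI^{0}\cap\mcI^{1}))}$; the remaining assertions of the proposition are then immediate from the model.
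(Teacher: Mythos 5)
Your proof is correct and follows the same approach as the paper, which gives only a one-sentence justification (that floating intervals together with loops freely generate $\End_{\BcC}(\one)$, with floating intervals parametrized by $\mcI^0\cap\mcI^1$, illustrated by Figure~\ref{figure-B6}). You rightly flag, and address more carefully, the freeness claim the paper leaves implicit — a normal-form or confluence argument, or an outright combinatorial model of morphisms as decorated partial matchings plus a multiset of closed components — which is indeed the substance needed to upgrade "generated by" to "freely generated by."
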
 

Indeed, floating intervals, together with loops in $\mcC$, freely generate $\End_{\BcC}(\one)$. Equivalence classes of floating intervals are parametrized by morphisms in $\mcI^0\cap \mcI^1$. Figure~\ref{figure-B6} shows a reduction of an arbitrary interval to the corresponding morphism. 

\vspace{0.1in} 

\input{figure-B6}

\begin{example} Morphisms from $\emptyset_0$ to the sequence $(X_1^+,X_2^-)$, where $X_1,X_2\in \Ob(\mcC)$, are of two types, shown in Figure~\ref{figure-B7}: 
\begin{itemize}
\item an arc from $X_2$ to $X_1$ labelled by a morphism $\beta:X_2\lra X_1$ together with an element of $\End_{\BcC}(\one)$ (this requires having such a morphism $\beta$), 
\item half-intervals with endpoints in $X_1^+$, $X_2^-$ labelled by morphisms $\beta_1\in \mcI^0,\beta_2\in \mcI^1$ with $\beta_1:Y_1\lra X_1$, $\beta_2:X_2\lra Y_2$, for some $Y_1,Y_2$, and an element of $\End_{\BcC}(\one)$ (if such morphisms exist).  
\end{itemize}

\input{figure-B7}

\end{example} 

To do the universal construction for $\BcC$, an evaluation $\alpha: \End_{\BcC}(\one)\lra R$ is determined by a map of sets 
\[
\alpha: \LcC \sqcup (\mcI^0\cap \mcI^1) \lra R. 
\]
Each such $\alpha$ gives rise to an $R$-linear rigid symmetric monoidal category $\mcB_{\alpha}(\mcC)=\mcB_{\alpha}(\mcC,\mcI^0,\mcI^1)$ with 
\[\End_{\mcB_{\alpha}(\mcC)}(\one)=R,
\]
that is, endomorphisms of the identity object form the ground commutative semiring $R$. 

Categories $\BcC=\mcB(\mcC,\mcI^0,\mcI^1)$ and $\mcB_{\alpha}(\mcC)=\mcB_{\alpha}(\mcC,\mcI^0,\mcI^1)$ may be called \emph{rook Brauer categories}, by analogy with \emph{rook Brauer algebras}~\cite{HdelM}. 

\vspace{0.1in}

Special cases of interpolation categories $\mcB_{\alpha}(\mcC,\mcI^0,\mcI^1)$ have been studied in~\cite{Kh3,IK-top-automata,IK}. In those papers $\mcC$ has a single object $X$ and the endomorphism ring $\End_{\mcC}(X)\cong \Sigma^{\ast}$, the free monoid on a finite set $\Sigma$, whose elements are called \emph{letters}. Sets $\mcI^0$ and $\mcI^1$ consist of all morphisms in $\mcC$ and are both parameterized by elements of $\Sigma^{\ast}$, that is, by \emph{words} in $\Sigma$. The set $\LcC$ of loops is parameterized by \emph{circular words} $\Sigma^{\circ}=\Sigma^{\ast}/\sim$, which are equivalence classes of words up to rotation, that is, $\omega_1\omega_2\sim \omega_2\omega_1$, for $\omega_1,\omega_2\in \Sigma^{\ast}$. Evaluation $\alpha$ is a map of sets 
\[
 \Sigma^{\ast}\sqcup \Sigma^{\circ} \lra R,
\]
and can be written via two maps: 
\begin{equation}\label{eq_two_maps}
 \alpha_\I: \Sigma^{\ast}\lra R, 
 \hspace{0.5cm} 
 \alpha_{\circ}: \Sigma^{\circ} \lra R,
\end{equation} 
that is,  as an assignment $\alpha_\I(\omega)\in R$ to each word $\omega$ and $\alpha_{\circ}(\omega)\in R$ to each circular word $\omega$. 

\begin{itemize}
 \item   In~\cite{Kh3} and~\cite{IK}, ring $R$ is a field $\kk$ and $\alpha_I,\alpha_{\circ}$ are \emph{noncommutative power series} in the set of variables $\Sigma$. State spaces for oriented $0$-manifolds are $\kk$-vector spaces, finite-dimensional if and only if  $\alpha_I,\alpha_{\circ}$ are \emph{rational} power series. For more on noncommutative rational power series, see~\cite{BR2,RRV} and references therein.
\item   In~\cite{IK-top-automata} $R$ is the Boolean semiring $\Bool=\{0,1|1+1=1\}$ and evaluations $\alpha_\I,\alpha_{\circ}$ are determined by subsets $\alpha_{\I}^{-1}(1)\subset \Sigma^{\ast}$ and $\alpha_{\circ}^{-1}(1)\subset \Sigma^{\circ}$, also called languages $L_{\I},L_{\circ}$, respectively. The state 
spaces for oriented $0$-manifolds are finite (equivalently, finite rank $\Bool$-modules) if and only if both languages $L_{\I},L_{\circ}$ are rational. A language $L\subset \Sigma^{\ast}$ is called \emph{rational}  if it is described by a regular expression or, equivalently, if it is the language accepted by a finite state automaton. Connections between universal construction and automata are explored in~\cite{IK-top-automata}, while an assignment of a one-dimensional defect TQFT to an nondeterministic automaton is explained in~\cite{GIKKL23}. 
\end{itemize} 
 
%
%

\subsection{Presheaves of sets and inner endpoints}
\label{subsec_presheaves}

A more general setup for inner (floating) endpoints of intervals with the underlying category $\mcC$ is possible, as follows. Without specifying what data to put at an inner endpoint, consider a half-interval with a top $+$ boundary endpoint (and another, inner, endpoint). The boundary endpoint is labelled by some object $X\in \Ob(\mcC)$. A morphism $\beta:X\lra Y$ in $\mcC$ can be applied at the boundary, producing another half-interval, see Figure~\ref{figure-C1}.

\vspace{0.1in} 

\input{figure-C1}

These applications (maps) must respect composition of morphisms of $\mcC$. Thus, for each object $X$ there is a collection of all possible half-intervals with the $X$ label at their boundary and for each morphism in $\mcC$ a map between these collections, subject to the usual compatibility. We can describe this setup as a covariant functor  
\begin{equation}\label{eq_G0}
 \mcG_r: \mcC \lra \Sets 
\end{equation}
from $\mcC$ to the category of sets. Then, to a half-interval as above with $X$ at the boundary we can assign an element $g$ of the set $\mcG_r(X)$ (giving a decoration of that half-interval), and concatenation with the interval $\beta$ gives the element $\mcG_r(\beta)(g)$, see Figure~\ref{figure-C1} on the right. 

Reversing orientation and looking at oppositely-oriented half-intervals with $X$ at the boundary and their interactions via morphisms in $\mcC$, the resulting data can be described as a contravariant functor from $\mcC$ to the category of sets: 
\begin{equation}\label{eq_G1} 
\mcG_{\ell}: \mcC^{\op}\lra \Sets. 
\end{equation}
Such a functor $\mcG_{\ell}$ corresponds to a presheaf of sets on $\mcC$. Likewise, a functor $\mcG_r$ above a presheaf of sets on $\mcC^{\op}$ and can alternatively be called a \emph{precosheaf} of sets on $\mcC$. 

Composing two half-intervals results in a floating interval. 
To interpret this composition, form the quotient set 
\begin{equation} \label{eq_G_cross} 
\GCcross := \bigsqcup_{X\in \Ob(\mcC)} \mcG_{\ell}(X)\times \mcG_{r}(X)/\sim ,
\end{equation} 
with the equivalence relation generated by 
\[
(g_{\ell}\beta, g_r)\sim (g_{\ell},\beta g_r), \ X,Y\in \Ob(\mcC),\  \beta\in\Hom_{\mcC}(X,Y),\ g_{\ell}\in \mcG_{\ell}(Y), \ g_r\in \mcG_r(X). 
\]
This equivalence is shown diagrammatically in Figure~\ref{figure-C2}. Thus, the  general data we need to interpret endpoints is a choice of a presheaf of sets on $\mcC$ and on $\mcC^{\op}$ (functors $\mcG_{\ell}$ and $\mcG_r$ above). 

\input{figure-C2}

\begin{prop} Each datum $(\mcC,\mcG_{\ell},\mcG_r)$ as above gives rise to a rigid symmetric monoidal category $\mcB^{\I}(\mcC):=\mcB(\mcC,\mcG_{\ell},\mcG_r)$. In this category the endomorphism monoid $\End(\one)$ of the identity object is the free abelian monoid on the set 
\[
\LcC \sqcup (\GCcross). 
\]
\end{prop}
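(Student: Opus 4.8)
\emph{Proof plan.} The argument is parallel to that of Proposition~\ref{prop_ideals}, with the pair of ideals replaced by the pair of (pre)sheaves $(\mcG_\ell,\mcG_r)$; I indicate the steps rather than carry out the diagrammatic bookkeeping. First I would define $\mcB^{\I}(\mcC)$ by generators and relations: adjoin to the generating morphisms of $\BcC$, for every $X\in\Ob(\mcC)$, a half-interval generator with a single $X^+$-labelled boundary endpoint decorated by an element $g\in\mcG_r(X)$, and a half-interval generator with a single $X^-$-labelled boundary endpoint decorated by an element $g'\in\mcG_\ell(X)$; impose the relations of $\BcC$, the relation of Figure~\ref{figure-C1} (precomposing a $\mcG_r$-decorated half-interval at $X^+$ with $\beta_+$, for $\beta:X\lra Y$, yields the half-interval decorated by $\mcG_r(\beta)(g)$), its $\mcG_\ell$-analogue, and the evident compatibility of the new generators with the transposition, cup and cap morphisms (inner endpoints slide freely through virtual crossings and may be bent using the rigid structure). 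Functoriality of $\mcG_r$ and $\mcG_\ell$ makes these relations consistent, and, exactly as in the proof that $\BcC$ is strict rigid symmetric monoidal and contains $\mcC$, the same holds for $\mcB^{\I}(\mcC)$, with $\mcC\hookrightarrow\mcB^{\I}(\mcC)$ given by $X\mapsto X^+$, $\beta\mapsto\beta_+$.

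Next I would establish a normal form. A general morphism of $\mcB^{\I}(\mcC)$ is a disjoint union of: (i) arcs joining two boundary points, each reducing by dot-merging (Figure~\ref{figure-B3}) and erasure of identity dots to a single arc carrying one morphism of $\mcC$; (ii) half-intervals joining a boundary point to an inner endpoint, each reducing via Figure~\ref{figure-C1} to a single half-interval carrying one element of $\mcG_r(X)$ or $\mcG_\ell(X)$; (iii) circles, each a loop in $\mcC$ and hence, after the reduction of Figure~\ref{figure-A6}, an element of $\LcC$; and (iv) floating intervals, each reducing via the equivalence relation of~\eqref{eq_G_cross} (Figures~\ref{figure-C2} and~\ref{figure-B6}) to a single element of $\GCcross$. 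Specialising to $\Hom_{\mcB^{\I}(\mcC)}(\one,\one)$, no boundary points are present, so a normal-form endomorphism of $\one$ is exactly a finite multiset of elements of $\LcC\sqcup\GCcross$; since all crossings are virtual, the disjoint-union (composition) operation on these diagrams is commutative with the empty diagram as unit, so $\End(\one)$ is a quotient of the free abelian monoid on $\LcC\sqcup\GCcross$.

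It then remains to show this surjection is injective, i.e.\ that distinct multisets stay distinct in $\mcB^{\I}(\mcC)$. I would do this by constructing the diagram category $\mathcal{D}$ whose hom-sets are the normal-form diagrams, with composition defined geometrically --- stack, isotope, merge morphism-dots, absorb morphisms into presheaf elements via $\mcG_r,\mcG_\ell$, and collect the closed components (loops and floating intervals) produced in the process together with their $\LcC$- and $\GCcross$-labels --- and then the canonical functor $\mcB^{\I}(\mcC)\lra\mathcal{D}$ that is the identity on objects and sends each generator to its normal form: by the previous paragraph this functor is full, and it sends distinct normal forms to distinct morphisms of $\mathcal{D}$ by construction, so no two distinct normal forms are identified in $\mcB^{\I}(\mcC)$; restricting to $\End(\one)$ gives the claim. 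The main obstacle is purely the verification that composition in $\mathcal{D}$ is well-defined and associative, the delicate point being that closed components are born during composition and one must check they are recorded unambiguously and with the correct labels; this verification is routine and formally identical to the corresponding one for $\BcC$ and for the classical oriented and rook Brauer categories, and it is where all the real work sits.
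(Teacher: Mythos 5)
Your proposal is correct and follows the approach the paper takes, which it mostly leaves implicit: the paper states this proposition with only a one-sentence remark (``floating intervals, together with loops, freely generate $\End(\one)$''), and your normal-form decomposition of closed components into $\LcC$ (circles) and $\GCcross$ (floating intervals), followed by a diagram-category model to detect distinct normal forms, is the natural way to supply the details the paper omits.
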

Note that $\End(\one)$ is freely generated by loops in $\mcC$ and floating intervals, and the latter are parametrized by elements of $\GCcross$.
An $R$-valued evaluation function $\alpha$ for the category $\mcB^{\I}(\mcC)$ is given by a map of sets 
\begin{equation}\label{alpha_boundary} 
\alpha: \LcC \sqcup (\GCcross) \lra R. 
\end{equation} 
Each such evaluation produces an $R$-linear rigid symmetric monoidal category, denoted $\mcB_{\alpha}^{\I}(\mcC)$. 

\vspace{0.1in} 

As a special case, one can take $\mcG_r$, respectively $\mcG_{\ell}$ to be a left ideal $\mcI^0$, respectively a right ideal $\mcI^1$ in $\mcC$, recovering the setup from Section~\ref{subsec_Brauer_boundary}. In slightly more detail, a left ideal $\mcI^0$ defines a covariant functor \[\mcG(\mcI^0):\mcC\lra \Sets
\]
that to an object $X$ of $\mcC$ assigns all $f\in \mcI^0$, $f\in \Hom(Y,X)$ for some $Y\in\Ob(\mcC)$. Functor $ \mcG(\mcI^0)$ is defined on morphisms via composition. We can then take $\mcG_r=\mcG(\mcI^0)$. Note the flip between left and right: a \emph{left} ideal $\mcI^0$ gives rise to the functor $\mcG_r$, which appears on the \emph{right} in the fibered product $\GCcross$.

Likewise, a right ideal $\mcI^1$ defines a functor $\mcG(\mcI^1):\mcC^{\op}\lra \Sets$, and we can set $\mcG_{\ell}=\mcG(\mcI^1)$. Again, there is a flip going from a right ideal to the functor $\mcG_{\ell}$, which appears on the \emph{left} in the fibered product $\GCcross$.  

\vspace{0.1in} 

{\it Non-monoidal case.}
Universal construction for a non-monoidal category $\mcC$ in Section~\ref{subsec_nonmon} can be extended to this setup as follows.  Start with a small category $\mcC$, pick a presheaf of sets $\mcG_{\ell}$ on $\mcC$, see \eqref{eq_G1} and a presheaf of sets $\mcG_r$ on $\mcC^{\op}$, see \eqref{eq_G0}. Pick a semiring $R$, not necessarily commutative. Consider the set $\GCcross$ and pick a map of sets 
\begin{equation} \label{alpha_again} 
\alpha \ :\ \GCcross\lra R.
\end{equation} 
The construction of Section~\ref{subsec_nonmon} can be extended to this setup. For an object $X\in \Ob(\mcC)$ define $\Fr_r(X)$ as the free right $R$-module with the basis $\mcG_r(X)$ and 
$\Fr_{\ell}(X)$ as the free left $R$-module with the basis $\mcG_{\ell}(X)$. Denote by $[g]$ the basis element associated with the element $g$ of $\mcG_r(X)$ or $\mcG_{\ell}(X)$. Elements of these modules can be written as $\sum_i  [g_{r,i}]a_i$, $g_{r,i}\in \mcG_r(X), a_i\in R$ and as $\sum_j b_j[g_{\ell,j}]$, $g_{\ell,j}\in \mcG_{\ell}(X), b_j\in R$, respectively. Map $\alpha$ defines an $R$-bilinear pairing
\begin{equation}
    (\:\:,\:\:)_N \ : \ \Fr_{\ell}(X)\times \Fr_r(X) \lra R, 
    \ \ \mbox{ where }
    \ \ ([g_r],[g_\ell]) := \alpha(g_r\times g_{\ell}). 
\end{equation}
On arbitrary linear combinations the map is 
\begin{equation}
\left(\sum_j b_j[g_{\ell,j}],\sum_i  [g_{r,i}]a_i\right) = \sum_{i,j} b_j \alpha(g_{\ell,j}\times g_{r,i})a_i. 
\end{equation} 
We can then define an equivalence relation on $\Fr_{\ell}(X)$ and on $\Fr_r(X)$ as in Section~\ref{subsec_nonmon}, identifying elements that give the same evaluation with any element of the opposite free module, and obtain the quotient $R$-modules: a right $R$-module $A_r(X):=\Fr_r(X)/\sim$ and a left $R$-module $A_{\ell}(X):=\Fr_{\ell}(X)/\sim$. The bilinear pairing 
\begin{equation}\label{eq_pairing2}
    A_{\ell}(X) \times A_r(X) \lra R
\end{equation}
is nondegenerate. A morphism $\beta:X\lra Y$ in $\mcC$ induces $R$-linear maps 
\begin{equation}\label{eq_maps_induce} 
A_r(\beta): A_r(X)\lra A_r(Y), \ \ A_{\ell}(\beta): A_{\ell}(Y)\lra A_{\ell}(X).
\end{equation} 
These maps, over all morphisms $\beta$ in $\mcC$, define a representation 
\[A_{\alpha,r}=A_r \ :\ \mcC\lra \mathrm{mod-}R,
\]
a covariant functor from $\mcC$ to the category of right $R$-modules, and a representation 
\[A_{\alpha,\ell} =A_{\ell}\ : \ \mcC^{\op}\lra R\dmod,
\]
a contravariant functor from $\mcC$ to the category of left $R$-modules. These functors should be viewed as the analogue of the universal construction in the nonmonoidal setting. Proposition~\ref{prop_nonmon1} extends to this setup as follows. 

\begin{proposition}\label{prop_nonmon2} Start with a small category $\mcC$, functors $\mcG_r:\mcC \lra \Sets $, $\mcG_{\ell}:\mcC^{\op} \lra \Sets,$     
a semiring $R$ and an evaluation map $\alpha:\GCcross\lra R$. 
The universal construction as described above results in 
\begin{itemize}
\item 
state spaces $A_{\ell}(X)$, $X\in\Ob(\mcC)$, which are left $R$-modules and, via maps \eqref{eq_maps_induce}, assemble into a contravariant functor  $A_{\ell} : \mcC^{\op}\lra R\dmod$. 
\item
state spaces $A_{r}(X)$, $X\in\Ob(\mcC)$, which are right $R$-modules and, via maps \eqref{eq_maps_induce}, assemble into a covariant functor $A_{\ell} : \mcC\lra \mathrm{mod-}R$. 
\end{itemize} 
The pairing \eqref{eq_pairing2} is nondegenerate. 
\end{proposition}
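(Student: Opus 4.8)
The plan is to run the argument of Proposition~\ref{prop_nonmon1} almost verbatim, with the set $\mcG_r(X)$ in the role of $\sqcup_{i\in I}\Hom_{\mcC}(S_i,X)$, the set $\mcG_\ell(X)$ in the role of $\sqcup_{j\in J}\Hom_{\mcC}(X,T_j)$, and the gluing relation $(g_\ell\beta,g_r)\sim(g_\ell,\beta g_r)$ defining $\GCcross$ in the role played by the associativity $(h_2\beta)h_1=h_2(\beta h_1)$ of composition in $\mcC$. Since the modules $\Fr_r(X)$, $\Fr_\ell(X)$, the pairing $(\,,\,)_N$, and the quotients $A_r(X)=\Fr_r(X)/\!\sim$, $A_\ell(X)=\Fr_\ell(X)/\!\sim$ have all been set up above, the only things that genuinely need to be proved are: (a) that for each $\beta\in\Hom_{\mcC}(X,Y)$ the $R$-linear extensions of $\mcG_r(\beta)$ and $\mcG_\ell(\beta)$ descend to well-defined $R$-linear maps \eqref{eq_maps_induce} on state spaces; (b) that these descended maps are functorial in $\beta$, covariantly for $A_r$ and contravariantly for $A_\ell$; and (c) that the descended pairing \eqref{eq_pairing2} is nondegenerate.

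The key input I would establish first is an adjunction-type identity for the pairing: for $\beta\in\Hom_{\mcC}(X,Y)$, $h\in\Fr_\ell(Y)$, $f\in\Fr_r(X)$,
\begin{equation}\label{eq_plan_adjunction}
  \bigl(h,\ \mcG_r(\beta)(f)\bigr)_N \ = \ \bigl(\mcG_\ell(\beta)(h),\ f\bigr)_N ,
\end{equation}
where $\mcG_r(\beta)$ and $\mcG_\ell(\beta)$ are extended $R$-linearly to $\Fr_r(X)\lra\Fr_r(Y)$ and $\Fr_\ell(Y)\lra\Fr_\ell(X)$. On basis elements $h=[g_\ell]$, $f=[g_r]$ both sides equal $\alpha$ evaluated on the pairs $g_\ell\times\mcG_r(\beta)(g_r)$ and $\mcG_\ell(\beta)(g_\ell)\times g_r$, which define the same class in $\GCcross$ precisely by its defining relation; extending $R$-bilinearly gives \eqref{eq_plan_adjunction} in general, the only point requiring care being that $R$ is a noncommutative semiring, so scalars from the left module $\Fr_\ell$ must stay on the left and scalars from the right module $\Fr_r$ on the right. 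Granting \eqref{eq_plan_adjunction}, descent is immediate: if $f\sim f'$ in $\Fr_r(X)$, i.e.\ $(g,f)_N=(g,f')_N$ for all $g\in\Fr_\ell(X)$, then for every $h\in\Fr_\ell(Y)$ we get $(h,\mcG_r(\beta)(f))_N=(\mcG_\ell(\beta)(h),f)_N=(\mcG_\ell(\beta)(h),f')_N=(h,\mcG_r(\beta)(f'))_N$, whence $\mcG_r(\beta)(f)\sim\mcG_r(\beta)(f')$, so $\mcG_r(\beta)$ induces $A_r(\beta):A_r(X)\lra A_r(Y)$; the mirror-image computation produces $A_\ell(\beta):A_\ell(Y)\lra A_\ell(X)$, and the same identity shows $(\,,\,)_N$ descends to a pairing $A_\ell(X)\times A_r(X)\lra R$.

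The rest is formal. Functoriality of $A_r$ and contravariant functoriality of $A_\ell$ follow from functoriality of $\mcG_r$ and $\mcG_\ell$ together with $R$-linearity: $\mcG_r(\id_X)=\id$ descends to $A_r(\id_X)=\id$, and $\mcG_r(\beta'\beta)=\mcG_r(\beta')\mcG_r(\beta)$ descends to $A_r(\beta'\beta)=A_r(\beta')A_r(\beta)$, giving the covariant functor $A_r:\mcC\lra\mathrm{mod-}R$; dually, contravariance of $\mcG_\ell:\mcC^{\op}\lra\Sets$ yields the contravariant functor $A_\ell:\mcC^{\op}\lra R\dmod$. Nondegeneracy of the descended pairing is built into the definition of $A_r(X)$ and $A_\ell(X)$ as quotients by the relation ``agrees against every test element of the opposite module'': distinct classes of $A_r(X)$ are separated by the class of some $g\in\Fr_\ell(X)$, and symmetrically. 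I expect the only genuine obstacle to be the careful bookkeeping in \eqref{eq_plan_adjunction} — checking that the gluing relation defining $\GCcross$ is exactly what is needed for $\mcG_r$ and $\mcG_\ell$ to pass to functors on state spaces — compounded by the fact that, $R$ being a semiring and not a ring, one cannot move terms to one side and must argue throughout with the test-element formulation of $\alpha$-equivalence, exactly as in Section~\ref{subsec_nonmon}.
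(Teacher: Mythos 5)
Your proposal is correct and takes essentially the same approach as the paper, which states Proposition~\ref{prop_nonmon2} without an explicit proof, treating it as a summary of the construction spelled out just before it. You have correctly identified the one piece of substance that is being left implicit: the adjunction identity \eqref{eq_plan_adjunction}, which on basis vectors $([g_\ell],[g_r])$ amounts to the observation that $(g_\ell,\mcG_r(\beta)(g_r))$ and $(\mcG_\ell(\beta)(g_\ell),g_r)$ define the same class in $\GCcross$ by the very relation used to define that set, so $\alpha$ takes the same value on both; everything else (descent of $\mcG_r(\beta),\mcG_\ell(\beta)$ to $A_r,A_\ell$, functoriality, descent and nondegeneracy of the pairing) then follows formally exactly as you say, including the care with scalar placement forced by noncommutativity of $R$ and the inability to "move terms to one side" when $R$ is only a semiring.
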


The setup of Section~\ref{subsec_nonmon} is recovered in the special case when one picks sets of objects $S_i,i\in I$ and $T_j,j\in J$ in $\mcC$ and specializes to functors 
\[\mcG_r(X):=\sqcup_{i\in I}\Hom_{\mcC}(S_i,X), \ \ \  \mcG_{\ell}(X):= \sqcup_{j\in J}\Hom_{\mcC}(X,T_j),
\]
that is, unions of representable functors, over $i$ and $j$ in those sets. In this case 
\[\GCcross=\bigsqcup_{i,j}\Hom_{\mcC}(S_i,T_j).
\]

%
%

\section{Pseudocharacters and one-dimensional topological theories}\label{sec-pseudo}


\subsection{Topological theories versus TQFTs: a realization problem}
\label{subsec_realization} 

Let us fix a commutative semiring $R$ and a rigid symmetric monoidal category $\mcC$ (we restrict to that case, for simplicity). We distinguish between $R$-valued TQFT and an $R$-valued topological theory for $\mcC$, as follows: 
\begin{itemize}
\item 
An $R$-valued TQFT is a symmetric monoidal functor 
\begin{equation} \label{eq_func1} 
\mcF \ : \ \mcC\lra R\dfgpmod
\end{equation} 
from $\mcC$ to the category $R\dfgpmod$ of finitely-generated projective $R$-modules. 
\item 
A topological theory is a lax symmetric monoidal functor 
\begin{equation}
A_{\alpha} \ : \ \mcC \lra R\dmod 
\end{equation}
obtained via a universal construction from $\mcC$ for some evaluation 
\[\alpha:\End_{\mcC}(\one)\lra R
\]
(a monoid homomorphism that intertwines composition of endomorphisms with multiplication in $R$), see Section~\ref{subsec_general}.
\end{itemize} 

In a TQFT $\mcF$ there are natural isomorphisms 
\begin{equation}\label{eq_iso1} 
 \mcF(N_0)\otimes_R \mcF(N_1)\cong \mcF(N_0 \otimes N_1 ),
\ \ N_0,N_1\in \Ob(\mcC), 
\end{equation}
while in a topological theory there are only morphisms 
\begin{equation}\label{eq_only}
A_{\alpha}(N_0)\otimes_R A_{\alpha}(N_1) \lra A_{\alpha}(N_0\otimes N_1), \ \ N_0,N_1\in \Ob(\mcC), 
\end{equation} 
with suitable compatibility conditions. Equivalently, $A_{\alpha}$ gives a lax TQFT. Note that, in our approach, morphisms \eqref{eq_only} are not a part of the axiomatics; instead, they emerge from the universal construction for the evaluation $\alpha$.

\begin{remark} Replacing category $R\dfgpmod$ in \eqref{eq_func1} by the bigger category $R\dmod$ of all $R$-modules does not lead to any new functors. Due to the rigidity of $\mcC$ any object $X\in \Ob(\mcC)$ is necessarily mapped to a finitely-generated projective $R$-module, see~\cite{GIKKL23} for instance. 
\end{remark} 

\begin{remark} One can, more generally, consider \emph{weak topological theories} that are given by lax symmetric monoidal functors,  without the requirement that they come from an evaluation $\alpha$. This is a more flexible definition, but in the present paper we restrict to the more rigid case above. 
\end{remark}

\begin{definition} A \emph{realization} of an evaluation $\alpha$ (or a realization of topological theory) over a commutative semiring $R$ is a TQFT $\mcF$ over $R$ that, on endomorphisms of the identity object, restricts to the evaluation $\alpha$,
\[
\mcF(x) = \alpha(x), \ \ x\in\End_{\mcC}(\one). 
\]
\end{definition} 


Not all evaluations $\alpha$ come from TQFTs. For instance, each object $X$ of $\mcC$ defines an endomorphism $\SS_X$ of $\one$ (oriented circle labelled by $X$) given by composing two rigidity morphisms for $X$, see Figure~\ref{figure-D0}. Then $\alpha(\SS_X)=\rk(\mcF(X)),$ where $\rk(P)$ is the Hattori--Stallings rank of a finitely-generated projective $R$-module $P$, see~\cite[Section 2.1]{GIKKL23}, for instance. Also see \cite{Hatt65,Stall65,Han13,Kadison99}. In particular, if $R$ is a field $\kk$ and $\alpha$ on such endomorphisms does not take values in the image of $\Z_+$ in $\kk$, then $\alpha$ does not come from any TQFT $\mcF$ for $\mcC$. Indeed, in this case $\mcF(X)\cong \kk^n$, for some $n$, and $\alpha(\SS_X)=n$, viewed as an element of $\kk$.

\vspace{0.1in} 

\begin{figure}
    \centering
\begin{tikzpicture}[scale=0.6]
\begin{scope}[shift={(0,0)}]
\draw[thick,->] (1,2) arc (90:-270:1); 
\node at (1.95,2.25) {$\SS_X$};

\node at (-0.5,1) {$X$};

\node at (4.125,1.35) {$\alpha$};
\draw[thick,->] (3.25,1) -- (5.25,1);

\node at (7.75,1) {$\rk(\mcF(X))$};

\end{scope}

\end{tikzpicture}
    \caption{Endomorphism $\SS_X$ of $\one$ (the circle carries the identity endomorphism $\id_X$) and its $\alpha$-evaluation (the rank of the projective module $\mcF(X)$).}
    \label{figure-D0}
\end{figure}

\vspace{0.1in}

Assume from now on that $R$ is a commutative \emph{ring}.  Then for any $X\in \Ob(\mcC)$ we can form the  antisymmetrizer 
\begin{equation}
 e^-_{X,n} := \sum_{\sigma \in S_n} (-1)^{\ell(\sigma)}\sigma, 
 \hspace{0.75cm} 
 e^-_{X,n} \in \End_{\mcC}(X^{\otimes n}), 
\end{equation} 
where $\ell(\sigma)$ is the length  of the permutation $\sigma$ of the symmetric group $S_n$
and 
elements $\sigma$ act by permuting terms in $X^{\otimes n}$. Depict $e^-_{X,n}$ by a box labelled $n$ with $n$ incoming and $n$ outgoing edges, each labelled $X$, see Figure~\ref{figure-D1}. Note that 
\[
 e^-_{X,n} e^-_{X,n} = n! e^-_{X,n}, 
\]
and if $n!$ is invertible in $R$, the endomorphism $\frac{1}{n!}e^-_{X,n}$ can be defined, and is an idempotent. In the Karoubi envelope of $\mcC$, the object $\left(X^{\otimes n},\frac{1}{n!}e^-_{X,n}\right)$ can be interpreted as defining the $n$-th exterior power of $X$. 

\input{figure-D1}

\vspace{0.1in} 

For a TQFT $\mcF$ on $\mcC$, $R$-module $\mcF(X)$ is  finitely-generated and projective, thus it is a direct summand of $R^n$, for some $n$. Then $\mcF(e^-_{X,n})=0$, since $\Lambda^{n+1}(R^n)=0$. 

\vspace{0.07in} 

If an evaluation $\alpha$ comes from a TQFT $\mcF$, then for any $X$ there exists  $d\ge 0$ such that: 
\begin{itemize}
\item Any way to close up $e^-_{X,d+1}$ into an endomorphism of $\one$ evaluates to $0$ via $\alpha$, see Figure~\ref{figure-D2}. 
\item $\alpha$ evaluates some closure of $e^-_{X,d}$ to a nonzero element of $R$.  
\end{itemize} 
Note that $e^-_{X,0}$ is the identity endomorphism of $X^{\otimes 0}=\one$, and evaluates to $1\in R$ under $\alpha$.

\input{figure-D2}

\vspace{0.1in} 

Since $\mcC$ is rigid symmetric monoidal, any closure of $e^-_{X,n}$ can be presented as composing it with some endomorphism $h$ of $X^{\otimes n}$ and closing up the endpoints via $n$ concentric arcs, see Figure~\ref{figure-D2}. 
 
For an evaluation $\alpha$ and an object $X$, nonnegative integer $d$ that satisfies these conditions is unique if it exists. We set $\psdeg_{\alpha}(X)=d$ if such $d$ exists,  otherwise  define $\psdeg_{\alpha}(X)=\infty$. We then call $\psdeg_{\alpha}(X)$ the \emph{degree} of $X$ for the evaluation $\alpha$. 

\begin{definition}\label{def_pseudo} Suppose that $\Q$ is a subring of $R$.
    Evaluation $\alpha:\End_{\mcC}(\one)\lra R$ is a  \emph{pseudo-TQFT}  of $\mcC$ if $\psdeg_{\alpha}(X)<\infty$ for any object $X$ of $\mcC$.
\end{definition}

A \emph{pseudo-TQFT} may also be called a \emph{pseudocharacter} of $\mcC$ with values in $R$.

\begin{definition}\label{def_character} 
The \emph{character} of a TQFT $\mcF$ in \eqref{eq_func1} is the evaluation $\alpha_{\mcF}:\End_{\mcC}(\one)\lra R$ induced by $\mcF$.
\end{definition} 
 Alternatively, $\alpha_{\mcF}$ may be called the \emph{trace} of $\mcF$.
A character $\alpha_{\mcF}$ of a TQFT $\mcF$ is a pseudo-TQFT. One can ask: 

\begin{question}
Under what conditions on a monoidal category $\mcC$ and a commutative ring $R\supset \Q$ is any pseudo-TQFT for $\mcC$ over $R$ given by the character $\alpha_\mcF$ of some TQFT $\mcF$ for $\mcC$?
\end{question} 

\vspace{0.07in} 

To justify these definitions, start with a group or a monoid $G$ and associate to it one-object category $\mcC_G$ with object $X$ and $\End_{\mcC_G}(X)=G$. Consider the Brauer category $\mcB(\mcC_G)$ and possible evaluations $\alpha$ for it.  Loops in $\mcC_G$ are parameterized by conjugacy classes in $G$. For a monoid $G$ conjugacy classes are equivalence classes under the relation generated by $gh\sim hg$ for $g,h\in G$, see Figure~\ref{figure-D3}. 

\input{figure-D3}

\vspace{0.07in} 

An evaluation $\alpha:L(\mcC_G)\lra R$ is any $R$-valued function on conjugacy classes. Suppose 
 given a representation of $G$ on a finitely-generated $R$-module $V$. The evaluation $\alpha_V(g):=\tr_V(g)$ has additional property of being annihilated by some antisymmetrizer $e^-_{X,d+1}$, see earlier. Due to the structure of morphisms in the Brauer 
 category of $\mcC_G$ that condition can be rewritten as 
 \begin{equation} \label{eq_zero}
 \tr_{\alpha}((g_1\otimes \dots \otimes g_{d+1})e^-_{X,d+1})=0, \ \ \ 
 \mbox{ for each }\, g_1,\ldots, g_{d+1}\in G, 
 \end{equation} 
 and is shown in Figure~\ref{figure-D4}. 
 Let us specialize to $R$ with any projective module a free module (for instance, $R$ a field or a local ring). 
 Then Figure~\ref{figure-D4} condition holds for $d= \rk_R(V)$. If $\rk_R(V)!$ is invertible in $R$, that $d$ is minimal possible.  
 
 A conjugation-invariant function $\alpha$ on $G$ such that \eqref{eq_zero} holds for some $d$ is called  a \emph{pseudocharacter} of $G$. The degree $d$ of a  pseudocharacter $\alpha$ is the smallest nonnegative integer with property \eqref{eq_zero}. Any character of a representation of $G$ on a finitely-generated $R$-module is a pseudocharacter. 

 \begin{remark}
Examples of diagrammatic computation of some of these traces are shown in Figures~\ref{figure-D5} and \ref{figure-D6}, where all but one dots at the exits of the antisymmetrizer box are labelled by $x\in G$ and the remaining dot carries $y\in G$. In general, this diagram evaluates to $\alpha(P_{\alpha}(x)y)$, where $P_{\alpha}(Y)$ can be defined as the characteristic polynomial of a generic $d\times d$-matrix $Y$ with $\alpha(Y^k):=\tr(Y^k)$, see~\cite[Theorem 2]{Dot11} for the general statement.
\end{remark}

\vspace{0.1in} 

\input{figure-D4}

An important insight of Taylor~\cite{Taylor91}, Rouquier~\cite{Rou-pseudo96} and Nyssen~\cite{Nyssen96} (motivated by earlier work of Wiles~\cite{Wiles88} and Procesi~\cite{Proc76,Proce87}, see also Carayol~\cite{Carayol94} and Mazur~\cite[Chapter~2, \S~7]{Mazur97}) was that for some fields and local rings $R$, those conditions are enough: any $R$-valued pseudocharacter $\alpha$ on $G$ is the trace of a  representation $V$ of $G$.  

\begin{prop}[\cite{Taylor91,Rou-pseudo96,Nyssen96}]\label{prop_pseudo}
Let $R=\kk$ be a separably closed field, $\alpha:G\lra \kk$ a pseudocharacter of degree $d$ and assume that $d!$ is invertible in $\kk$. Then $\alpha$ is the trace (character) of a semisimple representation $V$ of $G$ over $\kk$ of dimension $d$. Representation $V$ is determined by $\alpha$ uniquely, up to isomorphism.
\end{prop}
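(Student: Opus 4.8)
\emph{Proof proposal.} The plan is to follow the route of Taylor, Rouquier and Nyssen, organized through the Brauer-category picture of this section. First I would extend $\alpha$ by $\kk$-linearity to a central trace functional $T:\kk[G]\lra\kk$, so $T(ab)=T(ba)$, with $T(\gamma)=\alpha$ evaluated on the circle decorated by $\gamma\in G$; the degree hypothesis forces $T(1)=d$, since for $\mcC_G$ the $\operatorname{id}$-closures of $e^-_{X,d}$ and $e^-_{X,d+1}$ are the falling factorials $\lambda(\lambda-1)\cdots(\lambda-d+1)$ and $\lambda(\lambda-1)\cdots(\lambda-d)$ in $\lambda=\alpha(\SS_X)$, whose vanishing pattern pins $\lambda=d$. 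Then I would form the symmetric bilinear form $B(a,b):=T(ab)$ on $\kk[G]$, let $N$ be its radical, and set $A:=\kk[G]/N$; centrality of $T$ makes $N$ a two-sided ideal, so $A$ is a $\kk$-algebra carrying a non-degenerate symmetric associative form, a trace $T$, and an algebra map $G\lra A^{\times}$.

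The technical heart is to convert the antisymmetrizer vanishing condition $\tr_{\alpha}((g_1\otimes\dots\otimes g_{d+1})e^-_{X,d+1})=0$ of \eqref{eq_zero} and Figure~\ref{figure-D4} into a Cayley--Hamilton identity on $A$. Using multilinearity of the antisymmetrizer and the rigid symmetric structure of $\mcB(\mcC_G)$ to contract all but one strand (Procesi's fundamental trace identity), I expect to read the diagram as the relation $T(P_{\alpha}(a)\,b)=0$ for all $a,b\in\kk[G]$, where $P_{\alpha}(x)=x^{d}-e_{1}x^{d-1}+\dots\pm e_{d}$ has coefficients $e_{i}$ determined from the power sums $p_{k}(a):=T(a^{k})$ by Newton's identities; invertibility of $d!$ is exactly what makes this inversion possible. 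Thus every $a\in A$ satisfies $P_{\alpha}(a)=0$, i.e.\ $A$ is a Cayley--Hamilton algebra of degree $d$, and the standard bound gives $\dim_{\kk}A\le d^{2}<\infty$.

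Next I would show $A$ is semisimple, hence (as $\kk$ is separably closed) $A\cong\prod_{i=1}^{k}M_{d_{i}}(\kk)$: if the radical $J$ were nonzero, any $0\neq a\in J$ has $ab\in J$ nilpotent for all $b$, and a nilpotent has vanishing power sums, whence (Newton again) all $e_{i}=0$, so $T(ab)=p_{1}(ab)=0$ for all $b$, contradicting $N=0$. With $A$ split semisimple the central trace is $T=\sum_{i}c_{i}\operatorname{tr}_{i}$ for the reduced traces of the factors; non-degeneracy of $B$ gives $c_{i}\neq0$, and applying $P_{\alpha}=0$ to a rank-one idempotent $e_{i}$ shows its eigenvalues lie in $\{0,1\}$ and that $c_{i}=T(e_{i})$ is a positive integer. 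Setting $V:=\bigoplus_{i}(\kk^{d_{i}})^{\oplus c_{i}}$ with $G$ acting through $G\lra A^{\times}\lra\prod M_{d_{i}}(\kk)^{\times}$ yields $\tr_{V}(g)=\sum_{i}c_{i}\operatorname{tr}_{i}(g)=T(g)=\alpha(g)$ and $\dim V=\sum_{i}c_{i}d_{i}=T(1)=d$, a semisimple realization of $\alpha$.

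Finally, for uniqueness: for any semisimple $V'$ with $\tr_{V'}=\alpha$, the Jacobson density theorem over the separably closed $\kk$ shows the image algebra $A_{V'}=\kk[G]/\ker(\kk[G]\to\operatorname{End}V')$ equals $\kk[G]$ modulo the radical of the \emph{same} form $B$ (the form depends only on the character), hence is canonically identified with $A$ and with the images of the group elements; a faithful semisimple $A$-module is determined up to isomorphism by the multiplicities of its simple summands, which are recovered as $T(e_{i})$ and so coincide for $V$ and $V'$. Therefore $V'\cong V$. The main obstacle, as noted, is the second step: making rigorous the passage from the diagrammatic $(d+1)$-antisymmetrizer relation to the algebraic Cayley--Hamilton identity for $T$, together with invoking (or re-deriving) Procesi's finiteness and structure theory for Cayley--Hamilton algebras, which is precisely where $d!$ being invertible is genuinely used.
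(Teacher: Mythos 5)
The paper does not prove Proposition~\ref{prop_pseudo}; it attributes it to Taylor, Rouquier and Nyssen and refers the reader to Dotsenko and Bella\"iche for proofs. Your outline follows the standard Rouquier--Nyssen route (linearize $\alpha$ to a trace on $\kk[G]$, quotient by the radical of the trace form, show the quotient is a finite-dimensional split semisimple Cayley--Hamilton algebra, decompose by Wedderburn), and that is the correct strategy. However, two steps in the middle are presented as routine when they carry the substantive content, and one of them is asserted incorrectly.

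The assertion that "Cayley--Hamilton of degree $d$, and the standard bound gives $\dim_\kk A \le d^2$" is not a formal consequence of the CH identity alone: the free Cayley--Hamilton algebra of degree $d$ on countably many generators is infinite-dimensional. What yields finiteness is the CH identity together with nondegeneracy of the trace form on $A=\kk[G]/N$, and proving $\dim_\kk A \le d^2$ under those joint hypotheses is one of the main lemmas in Rouquier's paper, not a citation. More seriously, the semisimplicity argument as written is wrong: for $ab$ nilpotent in $A$ you say "a nilpotent has vanishing power sums, whence (Newton again) all $e_i=0$, so $T(ab)=p_1(ab)=0$." Nilpotence of $ab$ together with linearity and centrality of $T$ does \emph{not} imply $T((ab)^k)=0$: take $A_0=\kk[x]/(x^2)$ with $T_0(a+bx):=b$; then $x$ is nilpotent but $T_0(x)=1$. (That $(A_0,T_0)$ is not a CH pair of any degree is precisely the point: the CH structure is what must be invoked here.) The clean way to see that nilpotents have vanishing $T$ is via Procesi's embedding $(A,T)\hookrightarrow (M_d(R),\tr)$ into matrices over a reduced commutative ring, where a nilpotent matrix has trace zero --- and that embedding theorem is the deep input. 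You do flag Procesi's structure theory at the end, but you locate the main obstacle in the polarization step converting the $(d+1)$-antisymmetrizer relation into $T(P_\alpha(a)b)=0$; with $d!$ invertible that step is comparatively elementary, whereas the finiteness bound and semisimplicity of the quotient are where the real work sits. Similarly, your falling-factorial argument that $T(1)=d$ only uses the identity closures of $e^-_{X,d}$ and $e^-_{X,d+1}$; ruling out $T(1)\in\{0,\dots,d-1\}$ requires showing that a small value of $T(1)$ would force $e^-_{X,k+1}$ to vanish under \emph{all} closures for $k<d$, which the paper itself cites from Dotsenko rather than proving.
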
 

\begin{prop}[\cite{Taylor91,Rou-pseudo96,Nyssen96}] Let $R$ be a Henselian local ring, with a separably closed residue field $\kk=R/\mathfrak{m}$, and $G$ a monoid. Suppose given a pseudocharacter $\alpha:G\lra R$ of degree $d$ (with $d!$ invertible in $R$)  such that the reduction $\overline{\alpha}:G\lra \kk$ is irreducible, that is, not a sum of two non-trivial pseudocharacters. Then $\alpha$ is the trace of some representation of $G$ on $R^d$.  
\end{prop}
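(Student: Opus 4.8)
The plan is to realize $\alpha$ via the Cayley--Hamilton (Procesi--Rouquier) algebra attached to a pseudocharacter and then to split that algebra using the Henselian hypothesis. First extend $\alpha$ by $R$-linearity to a trace functional $\widetilde{\alpha}\colon R[G]\lra R$ on the monoid algebra, so that $\widetilde{\alpha}(ab)=\widetilde{\alpha}(ba)$; the minimality in the degree condition forces $\widetilde{\alpha}(1)=d$ (equivalently $\alpha(\SS_X)=d$). Put $\mathcal{K}=\{a\in R[G] : \widetilde{\alpha}(ab)=0 \text{ for all } b\in R[G]\}$; since $\widetilde{\alpha}$ is a trace, $\mathcal{K}$ is a two-sided ideal, and $A:=R[G]/\mathcal{K}$ is an $R$-algebra equipped with a nondegenerate $R$-bilinear trace pairing $T(a,b)=\widetilde{\alpha}(ab)$. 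The hypothesis that $\alpha$ has degree $d$ --- i.e. that every closure of $e^-_{X,d+1}$ vanishes, which unwinds (Figure~\ref{figure-D4}) to $\tr_{\alpha}((g_1\otimes\cdots\otimes g_{d+1})\,e^-_{X,d+1})=0$ for all $g_i\in G$ --- is precisely a Cayley--Hamilton identity of degree $d$ in $A$: every $a\in A$ is a root of a monic degree-$d$ polynomial whose coefficients are the universal polynomials (available because $d!$ is invertible) in $\widetilde{\alpha}(a),\widetilde{\alpha}(a^2),\dots$. A Procesi-type argument then shows $A$ is a finitely generated $R$-module, generated by at most $d^2$ elements.

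Next reduce modulo $\mathfrak{m}$. Since $\kk=R/\mathfrak{m}$ is separably closed and $\overline{\alpha}$ is irreducible of degree $d$ with $d!$ invertible, Proposition~\ref{prop_pseudo} produces an absolutely irreducible $G$-representation $\overline{V}\cong\kk^d$ with $\tr_{\overline{V}}=\overline{\alpha}$; by Jacobson density the image of $\kk[G]$ in $\End_{\kk}(\overline{V})$ is all of $M_d(\kk)$, and its kernel is exactly the radical of the $\overline{\alpha}$-trace form. Comparing this with $A\otimes_R\kk$ (whose $\kk$-dimension is at most $d^2$ by the previous step) gives $A\otimes_R\kk\cong M_d(\kk)$. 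I would then argue that $A$ is a projective $R$-module of rank $d^2$: the trace pairing is nondegenerate over $R$ and becomes perfect after $\otimes_R\kk$ (because $\mathrm{char}\,\kk\nmid d$), and a Nakayama argument combined with finite generation of $A$ upgrades this to freeness of rank $d^2$. Given that, $A$ is a module-finite $R$-algebra, projective of rank $d^2$, whose reduction mod $\mathfrak{m}$ is central simple over $\kk$; the standard criterion (checked mod $\mathfrak{m}$ by Nakayama, since $A\otimes_R A^{\mathrm{op}}$ and $\End_R(A)$ are both projective of rank $d^4$) shows $A$ is Azumaya over $R$, and an Azumaya algebra of rank $d^2$ over a local ring is isomorphic to $M_d(R)$ --- either because the Brauer group of a Henselian local ring equals that of its separably closed residue field, hence is trivial, or, more directly, by lifting a full set of matrix units from $M_d(\kk)$ to $A$ along the Henselian local ring $R$ and matching ranks.

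It remains to extract the representation. The composition $G\hookrightarrow R[G]\twoheadrightarrow A\cong M_d(R)$ is a homomorphism of monoids, hence makes $R^d$ into a $G$-module $\rho$. Under the splitting, the functional $a\mapsto\widetilde{\alpha}(a)$ on $A\cong M_d(R)$ must be the ordinary matrix trace: any $R$-linear $\ell$ on $M_d(R)$ with $\ell(xy)=\ell(yx)$ factors through $M_d(R)/[M_d(R),M_d(R)]\cong R$ as $\lambda\cdot\tr$, and $\ell(1)=d$ forces $\lambda d=d$, so $\lambda=1$ since $d$ is invertible (as $d!$ is). Therefore $\alpha(g)=\widetilde{\alpha}(g)=\tr(\rho(g))$ for every $g\in G$, so $\alpha$ is the character of the representation $\rho$ on $R^d$, as claimed. \textbf{The main obstacle is the projectivity/Azumaya step}: showing that the Cayley--Hamilton quotient $A$ is an $R$-projective (indeed free) module of exactly the rank $d^2$ forced by the residual picture. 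Residual absolute irreducibility pins down this rank and the Henselian hypothesis lets one lift the matrix structure; once $A\cong M_d(R)$ is established the remaining steps are formal.
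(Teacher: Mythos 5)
The paper itself does not prove this proposition: it cites it to Taylor, Rouquier, and Nyssen and refers the reader to Dotsenko and Bella\"iche for expositions with proofs. So there is no in-paper proof to compare against; your sketch must be judged on its own merits.

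Your outline is the standard Rouquier--Nyssen argument, and most of it is right: extend $\alpha$ to a trace on $R[G]$, form the Cayley--Hamilton quotient $A = R[G]/\mathcal{K}$, use residual irreducibility and Proposition~\ref{prop_pseudo} to identify $A\otimes_R\kk$ with $M_d(\kk)$, lift the matrix structure to $A\cong M_d(R)$ using the Henselian hypothesis (or the Azumaya/Brauer-group argument you give), and conclude by noting that the only $R$-linear trace on $M_d(R)$ with value $d$ at the identity is the matrix trace (since $[M_d(R),M_d(R)]$ is exactly the trace-zero matrices and $d$ is invertible).

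The one genuine gap is the order and justification of the finite-generation step. You assert, \emph{before} establishing the residual picture, that ``a Procesi-type argument shows $A$ is a finitely generated $R$-module, generated by at most $d^2$ elements.'' This is not a consequence of the Cayley--Hamilton identity alone: for an arbitrary (possibly infinite) monoid $G$, the $d$-CH identity bounds the degree of the minimal polynomial of each element but does not by itself bound the number of $R$-module generators of $A$. The correct argument goes the other way around: first use the residual irreducibility to produce $d^2$ elements $a_1,\dots,a_{d^2}\in R[G]$ lifting a $\kk$-basis of $M_d(\kk)\cong\kk[G]/\mathcal{K}_{\overline{\alpha}}$; the Gram matrix $(\widetilde{\alpha}(a_ia_j))$ is then a unit mod $\mathfrak{m}$, hence a unit in $R$, so the $R$-span $M$ of the $a_i$ is free of rank $d^2$ with perfect trace pairing and $M\cap\mathcal{K}=0$. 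The real technical crux --- which you gesture at as ``the main obstacle'' but do not resolve --- is showing $M+\mathcal{K}=R[G]$: given $a\in R[G]$, solve for $b\in M$ with $\widetilde{\alpha}(aa_j)=\widetilde{\alpha}(ba_j)$ for all $j$, and then prove (this is where the $(d+1)$-antisymmetrizer identities do essential work) that $\widetilde{\alpha}((a-b)c)=0$ for \emph{all} $c\in R[G]$, not merely for $c$ in the span of the $a_j$. Once this containment is in hand you get $A\cong M$ free of rank $d^2$, and the rest of your argument (Azumaya criterion by Nakayama, triviality of $\mathrm{Br}$ over a Henselian local ring with separably closed residue field, uniqueness of the trace form on $M_d(R)$) is correct. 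A small further note: your appeal to ``Proposition~\ref{prop_pseudo}'' already builds in the separably closed hypothesis and gives absolute irreducibility of $\overline{V}$, so the Jacobson density step is fine, but absolute irreducibility (not mere irreducibility) is what makes the residual algebra exactly $M_d(\kk)$ rather than a division algebra --- this is where separably closed residue field is actually used.
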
 

We refer to Dotsenko~\cite{Dot11} and Bella\"iche~\cite{Bellaiche09} for an introduction and proofs of these results, in addition to the above original papers. 

The case important to number theory is when $G$ is the Galois group of some field $F$. 
A good motivation for the use of pseudocharacters in the theory of Galois representations can be found in the introduction to~\cite{Bella12}. For further developments and more applications we  refer the reader to~\cite{Bellaiche09, Bella12,Bella21} and~\cite{WW17,WWE18}. When $d!$ is not invertible in $R$, the definition of a pseudocharacter needs to be refined, see~\cite{Chene14,Emers18}. 

Johnson~\cite{Johnson19} covers other uses and applications of pseudocharacters, including work of Buchstaber and Rees on pseudocharacters for commutative rings and rings of continuous functions, see also~\cite{BR2004} and references therein.

\input{figure-D5}

\input{figure-D6}

\vspace{0.1in} 

We see that the notion of a pseudo-TQFT in Definition~\ref{def_pseudo} for a rigid symmetric monoidal category $\mcC$ is a natural generalization of the notion of a pseudocharacter of a group or a monoid $G$, both restricted to evaluations with values in commutative $\Q$-algebras $R$. Pseudocharacters of a monoid $G$ correspond to pseudo-TQFTs for the Brauer envelope $\mcB(\mcC_G)$ of the category $\mcC_G$ with a single object and its endomorphism monoid $G$. 

It is a very interesting problem to explore pseudo-TQFTs (and existence of liftings  to TQFTs) for categories $\mcC$ as above other than $\mcB(\mcC_G)$. A small step in this direction is done in the next section, where pseudo-TQFTs are considered for Brauer categories $\mcB(\mcC)$ where $\mcC$ has more than one object and for Brauer categories with inner endpoints (rook Brauer categories).


\subsection{Pseudocharacters of Brauer envelopes} 
\label{subsec_distributed}

We will use \emph{Brauer envelope} and \emph{Braeur category} interchangeably.

We would like to extend Proposition~\ref{prop_pseudo}, restricted for simplicity to a field $\kk$ of characteristic $0$, to pseudo-TQFTs over the Brauer category $\BcC$ of a (small) category $\mcC$. The idea of the extension is to replace a collection of objects of $\mcC$ by their direct sum. 

Let us first assume that  $\mcC$ has finitely many objects $X_1,\dots, X_n$. Choose a pseudocharacter $\alpha:L(\mcC)\lra R$ for a commutative ring $R$. 
We first linearize $\mcC$ and pass to $R\mcC$, the category with the same objects as $\mcC$ and morphisms -- $R$-linear combinations of morphism in $\mcC$. Pseudocharacter $\alpha$ extends linearly to a pseudocharacter, also denoted $\alpha$ on $R\mcC$. For $R$-linear categories, it is convenient to assume that a pseudocharacter is $R$-linear as well, which is true in our case. While many papers restrict to pseudocharacters given by maps $\alpha:G\lra R$, for a group or a semigroup $G$, Rouquier~\cite{Rou-pseudo96} replaces $R[G]$ by an arbitrary $R$-algebra, and its straightforward to extend Definitions~\ref{def_pseudo} and~\ref{def_character} from $\mcC$ to an $R$-linear category and replace the notion of a TQFT $\mcF$ for a rigid monoidal category $\mcC$ to that of a TQFT for such a category $\mcC$ which is, in addition, $R$-linear.  

Form the category $\mcC'$ by formally adding the object $X:=X_1\oplus \ldots \oplus X_n$ to $R\mcC$. Denote by $p_i:X\lra X_i$ the projection of $X$ onto $X_i$ and by $\iota_i:X_i \lra X$ the inclusion of $X_i$ into $X$.  Endomorphism $x=(x_{ij})$ of $X$ is a matrix of morphisms in $R\mcC$, with $x_{ij}\in\Hom_{R\mcC}(X_j,X_i)$. 
The morphism $\iota_ip_j$ is given by the elementary $(i,j)$ matrix in this presentation and projectors  $\iota_ip_i$ are elementary idempotent matrices. 

Consider the full subcategory of $\mcC'$ generated by the single object $X$, and denote this subcategory by $\mcC_X$. It is an $R$-linear category and its morphisms are $R$-linear combinations of various compositions of maps $\iota_ip_j$ and morphisms in $\Hom_{\mcC}(X_k,X_l)$. Evaluation $\alpha$ of $\mcC'$ restricts to an evaluation of the endomorphism algebra $\End_{\mcC_X}(X)$. 

Assume now that $R=\kk$ is a field of characteristic $0$. 

\begin{prop}\label{prop_when_pseudo}
The evaluation $\alpha$ is a pseudocharacter on $\mcC_X$ and 
\[
\deg_{\alpha}(X) =   \displaystyle{\sum_{i=1}^n} \deg_{\alpha}(X_i). 
\]
\end{prop}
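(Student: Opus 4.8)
The plan is to work with the antisymmetrizer $e^-_{X,m}$ on $X^{\otimes m}$ where $X = X_1 \oplus \cdots \oplus X_n$, and compute how it decomposes in terms of the antisymmetrizers of the individual summands $X_i$. The key algebraic fact is that in a $\kk$-linear rigid symmetric monoidal category with $\kk$ of characteristic $0$, once $X$ is a direct sum, one has the exterior-power decomposition $\Lambda^m(X) \cong \bigoplus_{m_1 + \cdots + m_n = m} \Lambda^{m_1}(X_1) \otimes \cdots \otimes \Lambda^{m_n}(X_n)$ at the level of idempotents: the normalized idempotent $\tfrac{1}{m!}e^-_{X,m}$, conjugated into $\End(X_1^{\otimes m_1} \otimes \cdots)$ via the projections $p_i$ and inclusions $\iota_i$, splits as an orthogonal sum over compositions $(m_1,\ldots,m_n)$ of $m$ of the tensor products of the $\tfrac{1}{m_i!}e^-_{X_i,m_i}$ (suitably symmetrized over the $\binom{m}{m_1,\ldots,m_n}$ ways of interleaving). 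First I would record this decomposition carefully as a statement about idempotents in $\End_{\mcC_X}(X^{\otimes m})$ (it follows formally from the representation theory of $S_m$ and the relation $e^-_{X,m}e^-_{X,m} = m!\, e^-_{X,m}$, all of which makes sense since $\Q \subseteq \kk$).

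Granting that, the argument for the degree formula runs as follows. Write $d_i = \deg_\alpha(X_i)$, finite by hypothesis that $\alpha$ is a pseudocharacter on $\mcC$, and set $d = \sum_i d_i$. For the upper bound $\deg_\alpha(X) \le d$, I would show every closure of $e^-_{X,d+1}$ evaluates to $0$: using the decomposition above, $e^-_{X,d+1}$ becomes (up to invertible scalars in $\kk$) a sum of terms each of which factors through some $X_1^{\otimes m_1} \otimes \cdots \otimes X_n^{\otimes m_n}$ with $\sum m_i = d+1$, hence with $m_i \ge d_i + 1$ for at least one $i$; each such term contains the factor $e^-_{X_i, m_i}$ with $m_i > d_i$, and since in a rigid symmetric monoidal category any closure of $e^-_{X_i,m_i}$ is a linear combination of closures obtained by composing with endomorphisms of $X_i^{\otimes m_i}$ and capping off (as recorded in the discussion around Figure~\ref{figure-D2}), the defining vanishing property $\psdeg_\alpha(X_i) = d_i$ forces the whole closure to vanish. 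For the lower bound $\deg_\alpha(X) \ge d$, i.e. that some closure of $e^-_{X,d}$ is nonzero, I would pick for each $i$ an endomorphism $h_i$ of $X_i^{\otimes d_i}$ and a closure witnessing $\alpha(\text{closure of } e^-_{X_i,d_i}\text{ via }h_i) \ne 0$, assemble the tensor product $h = \iota^{\otimes}(h_1 \otimes \cdots \otimes h_n)p^{\otimes}$ together with the matching interleaving of the concentric cap arcs, and observe that in the decomposition of the corresponding closure of $e^-_{X,d}$ only the single term with $(m_1,\ldots,m_n) = (d_1,\ldots,d_n)$ survives (the mixed terms die because they route a strand of $X_i^{\otimes m_i}$, $m_i \ne d_i$, through $h$, which only has components $X_i^{\otimes d_i} \to X_i^{\otimes d_i}$), and that surviving term evaluates to a nonzero product $\prod_i \alpha(\text{closure}_i)$ times a positive rational; here characteristic $0$ is used to ensure the multinomial coefficient does not vanish.

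The main obstacle, and the step I would spend the most care on, is the idempotent decomposition of $e^-_{X,m}$ itself and the bookkeeping of which closures of $e^-_{X,m}$ correspond to which closures of the constituent $e^-_{X_i,m_i}$. Abstractly this is the classical statement that $\Lambda^\bullet$ is a monoidal functor ($\Lambda^m(V \oplus W) = \bigoplus_{a+b=m}\Lambda^a(V)\otimes\Lambda^b(W)$), but here it has to be phrased purely diagrammatically inside the ambient category $\mcC_X$ — in terms of the elementary matrix morphisms $\iota_i p_j$ — without any underlying vector spaces, and one must check that the conjugation by these matrix units interacts correctly with the symmetric group action and with the concentric-arc closures. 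Once that combinatorial lemma is in place, both inequalities are essentially immediate, and the equality $\deg_\alpha(X) = \sum_i \deg_\alpha(X_i)$ follows, simultaneously proving that $\alpha$ restricted to $\End_{\mcC_X}(X)$ is a pseudocharacter (finiteness of the degree).
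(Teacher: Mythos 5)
Your proposal is correct in substance but takes a genuinely different route from the paper's on the half of the statement that is not just the upper bound. For the inequality $\deg_\alpha(X)\le \sum_i d_i$, your all-at-once multinomial decomposition of $e^-_{X,m}$ is morally the same as the paper's argument, which instead handles $n=2$ and induction: both hinge on writing $\id_X^{\otimes(d+1)}=(\sum_i\iota_ip_i)^{\otimes(d+1)}$, pulling a sign idempotent $e^-_{X_i,m_i}$ with $m_i\ge d_i+1$ out of $e^-_{X,d+1}$ (at the cost of the invertible factorial $(d_i+1)!$, hence $\Char(\kk)=0$), and invoking the vanishing for $X_i$. Where you genuinely diverge is the equality: you propose to exhibit an explicit nonzero closure of $e^-_{X,d}$ by tensoring together witnesses $h_i$ for each $X_i$ and observing that mixed terms die because $p_j\iota_i=0$ for $i\ne j$, so $\tr(h\,e^-_{X,d})=\prod_i\tr(h_i\,e^-_{X_i,d_i})$, a product of nonzero scalars. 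The paper instead closes this gap with a one-line citation to Dotsenko's characterization $\deg_\alpha(Y)=\tr_\alpha(\id_Y)$ in characteristic $0$, from which $\deg_\alpha(X)=\tr_\alpha(\id_X)=\sum_i\tr_\alpha(\id_{X_i})=\sum_i d_i$ is immediate. Your route is more self-contained (no appeal to the pseudocharacter literature) but requires checking the factorization of the closed-up trace; the paper's is shorter but outsources the lower bound to a cited lemma. Both are fine.

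Two small points worth tightening in your write-up. First, your assertion that a multinomial coefficient appears in the surviving term and that characteristic $0$ is needed there is not quite right: since $p^\otimes\sigma\iota^\otimes=0$ whenever $\sigma$ does not lie in the Young subgroup $S_{d_1}\times\cdots\times S_{d_n}$, one gets $p^\otimes e^-_{X,d}\iota^\otimes=e^-_{X_1,d_1}\otimes\cdots\otimes e^-_{X_n,d_n}$ on the nose, with no combinatorial prefactor; the only place $\Char(\kk)=0$ enters is in the upper-bound step (pulling out a sub-antisymmetrizer costs $(d_i+1)!$). Second, the step from $\tr(h\,e^-_{X,d})=\prod_i\tr(h_i\,e^-_{X_i,d_i})$ to nonvanishing under $\alpha$ silently uses that $\alpha$ extended $\kk$-linearly to $\kk\End_{\mcB(\mcC)}(\one)$ remains multiplicative (it is the induced $\kk$-algebra map out of a polynomial ring), which is true but should be said. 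Neither of these affects the validity of your approach.
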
 
\begin{proof} It is enough to prove the proposition for $n=2$, with $X=X_1\oplus X_2$ and apply induction on $n$. Let $d_i=\deg_{\alpha}(X_i)$, $i=1,2$, and set $d=d_1+d_2$. Since $\id_X = \iota_1 p_1+\iota_2 p_2$, an endomorphism $h\in \End_{\mcC_X}(X^{\otimes (d+1)})$ can be written as $(\iota_1 p_1+\iota_2 p_2)^{\otimes (d+1)} \circ h \circ (\iota_1 p_1+\iota_2 p_2)^{\otimes (d+1)}$. Thus $h$ can be written as sum of terms $h_u$ which factor through $X_1$ or $X_2$ (through $\iota_1p_1$ or $\iota_2p_2$ at each \emph{in}  strand of $h$ (in the larger category $\mcC'$), see Figure~\ref{figure-E1}.

\vspace{0.1in} 

\input{figure-E1}

\input{figure-E2}

\input{figure-E3}

 Since $d=d_1+d_2$, in each term $h_u$ there are at least $d_1+1$ strands labelled $X_1$ or at least $d_2+1$ strands labelled $X_2$. Suppose that there are $t\ge d_1+1$ strands labelled $X_1$ in a given term $h_u$.  Conjugating by a permutation in $S_{d+1}$ these strands can be brought together and to the far left, see Figure~\ref{figure-E2}. 
The composition $h_u\circ e^-_{X,d+1}=h_u \sigma^{-1} \circ \sigma e^-_{X,d+1}$ factors through the map to $X_1^{\otimes t}\otimes X_2^{\otimes (d+1-t)}$, where $t$ is the number of $1$'s in the sequence $(i_1,\dots, i_{d+1})$.  

Permutation $\sigma$ is next to the sign idempotent and can be removed, at most at the cost of a sign, see Figure~\ref{figure-E3}. 

Next, a smaller sign  idempotent can be pulled from the one of size $d+1$:  
\begin{eqnarray*} 
((\iota_1 p_1)^{\otimes (d_1+1)}\otimes \id_X^{\otimes d_2}) e^-_{X,d+1}  & = & \small{\frac{1}{(d_1+1)!}}
\bigl( (\iota_1 p_1)^{\otimes (d_1 +1)}\otimes \id_X^{\otimes d_2}\bigr) 
\bigl( (e^-_{X,d_1+1}\otimes \id_X^{\otimes d_2}) \circ e^-_{X,d+1}\bigr)  \\
& = &  \frac{1}{(d_1+1)!}  
\bigl(\bigl(\iota_1^{\otimes (d_1+1)} \circ e^-_{X_1,d_1+1} \circ p_1^{\otimes (d_1+1)}\bigr) \otimes \id_X^{\otimes d_2}
\bigr)\circ e^-_{X,d+1}, 
\end{eqnarray*} 
see also Figure~\ref{figure-E3}. 

Since $t\ge d_1+1$, antisymmetrizer  $e^-_{X,d_1+1}$ in Figure~\ref{figure-E3} (when inserted into the appropriate position in Figure~\ref{figure-E2} on the right) has all top endpoints labelled $X_1$, so it can be rewritten as $e^-_{X_1,d_1+1}$. 

The latter idempotent, 
$e^-_{X_1,d_1+1}$,  will evaluate to $0$ when composed with any endomorphism of $X_1^{\otimes (d_1+1)}$ and evaluated upon closing up, since $\deg_\alpha(X_1)=d_1$. Consequently, closure of each term $h_u$ with $t\ge d_1+1$ evaluates to $0$. The same argument with $X_2$ in place of $X_1$ shows that each term $h_u$ with $t<g_1+1$ evaluates to $0$, so that $h$ evaluates to $0$. 

\vspace{0.07in} 

This implies that $\deg_{\alpha}(X_1\oplus X_2)\le d_1 + d_2$. Since $\kk$ has characteristic $0$, if $\alpha$ is a pseudocharacter, its degree $\deg_{\alpha}(X)=\tr_{\alpha}(\id_X)$, see~\cite[Proposition 3]{Dot11}. Hence 
\[
\deg_{\alpha}(X)=\tr_{\alpha}(\id_X) = \tr_{\alpha}(\id_{X_1})+\tr_{\alpha}(\id_{X_2})=d_1+d_2, 
\]
that is, $\alpha$ is a pseudocharacter on $\mcC_X$ of degree 
\[\deg_{\alpha}(X)= \deg_{\alpha}(X_1)+\deg_{\alpha}(X_2).
\]
\end{proof} 

Assume now that $\kk$ is an algebraically closed field of characteristic $0$. 
By the lifting property (Proposition~\ref{prop_pseudo}), $\alpha$ is a character of a $d_1+d_2$-dimensional semisimple representation $V$ of $\kk$-algebra $\End_{\mcC_X}(X)\cong \End(X_1\oplus X_2)$, where we assume that the algebra acts on  $V$ on the left. This representation is semisimple and the quotient algebra 
\begin{equation}\label{eq_quot_alg} 
B:=\End_{\mcC_X}(X)/\ker(\phi)
\end{equation} 
by the kernel of that action $\phi: \End_{\mcC_X}(X)\lra \End_{\kk}(V)$ is the direct product of finitely many matrix algebras, 
\begin{equation} \label{eq_prod} 
B\cong \prod_i\mathsf{Mat}_{n_i}(\kk).
\end{equation}

Endomorphisms $e_i := \iota_ip_i$ are mutually-orthogonal idempotents in $\End_{\mcC_X}(X)$ and  $1=e_1+e_2$. Let $V_i = e_i V$, $i=1,2$. A morphism $f\in \Hom_{\kk\mcC}(X_i,X_j)$ induces a $\kk$-linear map $V_i\lra V_j$, $e_iv_i \longmapsto fe_iv_i$, $v_i\in V_i$. This gives a representation $V$ of the categories $\mcC$ and $\kk\mcC$.  Here for simplicity we use the same notation $V$ for the representation of the algebra $\End_{\mcC_X}(X)$ and the category $\mcC$. 

Elements of $\End_{\kk\mcC}(X_i)$ act by zero on $V_j$, $j\not= i$, and their trace on $V_i$ is given by $\alpha$, $i=1,2$. Consequently, $\dim_{\kk}(V_i)=d_i$. 

The images $e_1',e_2'$ of $e_1,e_2$ in $B$ under the quotient map \eqref{eq_quot_alg} can be simultaneously conjugated inside each matrix algebra $\mathsf{Mat}_{n_i}(\kk)$ to the diagonal form, so that their images in $\mathsf{Mat}_{n_i}(\kk)$ (after changing a basis of $\kk^{n_i}$) are 
\[ 
e_1'=\sum_{j=1}^{m_i} e_{jj}, 
\hspace{0.75cm}
e_2'=\sum_{k=m_1+1}^{n_i} e_{kk}.
\] 
This explains the structure of quotient algebras $B_i:=\End_{\kk\mcC}(X_i)/\ker(\phi_i)$, where 
\[ 
\phi_i: \End_{\mcC}(X_i) \lra \End_{\kk}(V_i). 
\] 
Without loss of generality, assume that $n_i\geq m_i$. The corresponding representation $V_i$, which are semisimple representations of $B_i$ is: 
\begin{equation} 
\label{eq_prod_2} 
B_1\cong \prod_i\mathsf{Mat}_{m_i}(\kk), \ \ \ V_1\cong \bigoplus_i \kk^{m_i}, \ \ \ 
B_2\cong \prod_i\mathsf{Mat}_{n_i-m_i}(\kk), \ \ \  V_2\cong \bigoplus_i \kk^{n_i-m_i}. 
\end{equation} 
Module $V_1$ is a sum of column modules $\kk^{m_i}$ with matrix factors of $B_1$ acting on the corresponding summands, and likewise for $V_2$ and $B_2$. 

In particular, representation $V_i$ of $\End_{\kk\mcC}(X_i)$ and of its quotient algebra $B_i$ is semisimple. It is the unique, up to isomorphism, representation with the trace given by restricting $\alpha$ to endomorphisms of $X_i$ in $\mcC$ (or in $R\mcC$). 

Representation $V$ of $\mcC$ is semisimple and is determined uniquely, up to isomorphism, by $\alpha$. 

\vspace{0.1in} 

The above construction gives the following result. 

\begin{prop} \label{pseudo_prop_finite} Suppose $\mcC$ is a category with finitely many objects and $\alpha$ a pseudo-character of $\mcC$ valued in an algebraically closed field $\kk$ of characteristic $0$. Then $\alpha$ is the character of a semisimple representation $V$ of $\mcC$. Representation $V$ is unique up to an isomorphism. 
\end{prop}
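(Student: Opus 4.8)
The plan is to reduce the statement about a category $\mcC$ with finitely many objects $X_1,\ldots,X_n$ to the case of the endomorphism algebra of the single object $X=X_1\oplus\cdots\oplus X_n$ in the enlarged category $\mcC_X$, exactly as set up in the paragraphs preceding Proposition~\ref{prop_when_pseudo}, and then invoke the classical lifting result (Proposition~\ref{prop_pseudo}) for pseudocharacters of an algebra. First I would record that by Proposition~\ref{prop_when_pseudo} the evaluation $\alpha$, restricted to $\End_{\mcC_X}(X)$, is a pseudocharacter of finite degree $d=\sum_{i=1}^n\deg_\alpha(X_i)$, and that $d!$ is invertible since $\mathrm{char}(\kk)=0$. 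Proposition~\ref{prop_pseudo} (applicable since $\kk$ is algebraically closed, hence separably closed) then produces a semisimple representation $\phi:\End_{\mcC_X}(X)\lra\End_\kk(V)$ with $\dim_\kk V=d$ whose trace is $\alpha$, and $V$ is unique up to isomorphism.

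Next I would extract a representation of $\mcC$ itself from $V$. The idempotents $e_i:=\iota_ip_i\in\End_{\mcC_X}(X)$ are mutually orthogonal with $\sum_i e_i=\id_X$, so setting $V_i:=e_iV$ gives a direct sum decomposition $V=\bigoplus_i V_i$. A morphism $f\in\Hom_{\kk\mcC}(X_i,X_j)$, regarded inside $\End_{\mcC_X}(X)$ as $\iota_j f p_i$, induces a $\kk$-linear map $V_i\lra V_j$; functoriality is immediate from $p_i\iota_i=\id_{X_i}$ and the way composition in $\mcC$ sits inside $\mcC_X$. This defines a $\kk$-linear representation of $\mcC$, which I will also call $V$. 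Restricting the trace identity $\mathrm{tr}_V(\cdot)=\alpha(\cdot)$ to $\End_{\kk\mcC}(X_i)$ — whose elements act as $0$ on $V_j$ for $j\neq i$ — shows $\mathrm{tr}_{V_i}=\alpha|_{\End_{\mcC}(X_i)}$, and in particular $\dim_\kk V_i=\deg_\alpha(X_i)$, consistent with $\sum_i\dim_\kk V_i=d$.

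It then remains to establish semisimplicity of $V$ as a representation of $\mcC$ and uniqueness up to isomorphism. For semisimplicity I would argue that $V$ is a semisimple module over $\End_{\mcC_X}(X)$ (by construction of $\phi$ via Proposition~\ref{prop_pseudo}), so the image $B=\End_{\mcC_X}(X)/\ker\phi$ is a product of matrix algebras $\prod_i\mathsf{Mat}_{n_i}(\kk)$; the idempotents $e_i$ can be simultaneously diagonalized inside each factor, which exhibits each $V_i$ as a direct sum of simple modules over the corresponding quotient $B_i:=\End_{\kk\mcC}(X_i)/\ker\phi_i$, as in the display \eqref{eq_prod_2}. A representation of the category $\mcC$ on $\bigoplus V_i$ with each $\End_{\mcC}(X_i)$ acting semisimply and with the ``off-diagonal'' morphisms $\Hom(X_i,X_j)$ intertwining is precisely a semisimple module over the ``matrix category'' built from $\mcC$, i.e.\ over $\End_{\mcC_X}(X)$ — so semisimplicity transfers back and forth. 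Uniqueness follows because a semisimple representation of $\mcC$ determines, and is determined by, the semisimple $\End_{\mcC_X}(X)$-module obtained by letting $e_i$ act as the projection onto $V_i$; two such with the same character have isomorphic $\End_{\mcC_X}(X)$-modules by the uniqueness clause of Proposition~\ref{prop_pseudo}, and the isomorphism is automatically compatible with all $e_i$ hence restricts to an isomorphism of $\mcC$-representations. The main obstacle I anticipate is making this dictionary between representations of $\mcC$ and modules over $\End_{\mcC_X}(X)$ fully precise — in particular checking that semisimplicity is genuinely preserved in both directions and that ``character of a representation of $\mcC$'' matches ``character of the associated $\End_{\mcC_X}(X)$-module'' on the nose — since everything else is a direct application of the already-established Propositions~\ref{prop_when_pseudo} and~\ref{prop_pseudo}.
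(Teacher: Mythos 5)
Your proposal reproduces the paper's own construction step for step: reduce to the single object $X = X_1 \oplus \cdots \oplus X_n$, use Proposition~\ref{prop_when_pseudo} to see that $\alpha$ is a pseudocharacter of degree $d=\sum_i\deg_\alpha(X_i)$ on $\End_{\mcC_X}(X)$, invoke Proposition~\ref{prop_pseudo} to obtain a unique semisimple $\End_{\mcC_X}(X)$-module $V$ with trace $\alpha$, split $V=\bigoplus_i e_iV$ via the orthogonal idempotents $e_i=\iota_ip_i$, and read off a representation of $\mcC$ with the expected trace and dimension identities $\dim_\kk V_i=\deg_\alpha(X_i)$. The ``dictionary'' you flag as the remaining obstacle is exactly the observation, implicit in the paper, that $\End_{\mcC_X}(X)\cong\bigoplus_{i,j}\Hom_{\kk\mcC}(X_i,X_j)$ is the category algebra of $\mcC$, so a representation of $\mcC$ on $(V_i)_i$ and a module over $\End_{\mcC_X}(X)$ with $e_i$-decomposition $\bigoplus_iV_i$ are the \emph{same} object and semisimplicity is preserved tautologically; one word of caution is that the phrasing ``with each $\End_\mcC(X_i)$ acting semisimply \ldots\ is precisely a semisimple module'' reads as a criterion that is false in general (an $A_2$-quiver with a nonzero arrow has both $V_i$ semisimple over $\kk$ but is not semisimple), though you do not actually rely on it since you already hold semisimplicity of $V$ over $\End_{\mcC_X}(X)$ from Proposition~\ref{prop_pseudo}.
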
 

In particular, $V=\oplus_i V_i$, where $i$ parametrizes objects $X_i$ of $\mcC$. A morphism $f\in \Hom_{\mcC}(X_i,X_j)$ induces a map $V_i\lra V_j$, compatible with the composition of morphisms. The dimension $\dim V_i = \deg_{\alpha}(X_i)$ and $\alpha(f)=\tr_{V_i}(f)$, for $f\in \End_{\mcC}(X_i)$. Each representation $V_i$ of $\kk\End_{\mcC}(X_i)$ is semisimple. 

\vspace{0.1in} 

Next, we extend this proposition to $\mcC$ with countably many objects.

\begin{prop} \label{pseudo_prop_countable} Suppose $\mcC$ is a category with countably many objects and $\alpha$ a pseudo-character of $\mcC$ valued in an algebraically closed field $\kk$ of characteristic $0$. Then $\alpha$ is the character of a semisimple representation $V$ of $\mcC$. Representation $V$ is unique up to an isomorphism. 
\end{prop}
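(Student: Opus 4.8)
The plan is to bootstrap from the finite-object case (Proposition~\ref{pseudo_prop_finite}) by a direct-limit / compatibility argument. Write the objects of $\mcC$ as a nested union $\Ob(\mcC) = \bigcup_{m\geq 1} S_m$ with $S_1\subseteq S_2\subseteq\cdots$ finite, and let $\mcC_m$ be the full subcategory of $\mcC$ on $S_m$. Restricting $\alpha$ to loops in $\mcC_m$ gives a pseudo-character of $\mcC_m$ (the antisymmetrizer vanishing conditions for objects of $\mcC_m$ are a sub-collection of those for $\mcC$, and closures of $e^-_{X,d+1}$ computed inside $\mcC_m$ are a fortiori closures inside $\mcC$). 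By Proposition~\ref{pseudo_prop_finite}, $\alpha|_{\mcC_m}$ is the character of a semisimple representation $V^{(m)} = \bigoplus_{X_i\in S_m} V_i^{(m)}$ of $\mcC_m$, unique up to isomorphism, with $\dim V_i^{(m)} = \deg_\alpha(X_i)$ independent of $m$.

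Next I would exploit the uniqueness statement to make these compatible. For $m\leq m'$ the restriction of $V^{(m')}$ to $\mcC_m$ is again a semisimple representation of $\mcC_m$ with character $\alpha|_{\mcC_m}$ (semisimplicity is inherited on full subcategories in characteristic $0$, since each $\End_{\kk\mcC_m}(X_i)$ acts through a quotient of $\End_{\kk\mcC_{m'}}(X_i)$, hence semisimply), so by the uniqueness clause of Proposition~\ref{pseudo_prop_finite} there is an isomorphism $\theta_{m',m}: V^{(m')}|_{\mcC_m} \xrightarrow{\sim} V^{(m)}$. The cocycle condition $\theta_{m',m}\circ(\theta_{m'',m'}|_{\mcC_m}) = \theta_{m'',m}$ need not hold on the nose, but this is the standard situation of a direct system of objects-with-isomorphisms; one rigidifies it, e.g.\ by fixing once and for all a basis of each $V_i := V_i^{(i)}$ (where $i$ is the least index with $X_i\in S_i$; more cleanly, index by objects and take $V_i$ to be the representation space attached to $X_i$ at the first stage it appears) and transporting structure, or by replacing the $V^{(m)}$ by a fixed $\kk$-vector space $V_i$ of dimension $\deg_\alpha(X_i)$ for each object $X_i$ and using the $\theta$'s to define, for every morphism $f\in\Hom_\mcC(X_i,X_j)$, a linear map $V_i\to V_j$, checking this is independent of which $\mcC_m\ni f$ one uses. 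Since any single morphism, and any finite composite of morphisms, lives in some $\mcC_m$, functoriality $V(gf) = V(g)V(f)$ and $V(\id) = \id$ follow from functoriality of $V^{(m)}$, and $\alpha(f) = \tr_{V_i}(V(f))$ for $f\in\End_\mcC(X_i)$ holds because it holds for $V^{(m)}$. Thus $V = \bigoplus_i V_i$ is a representation of $\mcC$ with character $\alpha$.

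For semisimplicity of $V$ and the final uniqueness claim: semisimplicity of a representation of $\mcC$ should be read object-wise, i.e.\ each $V_i$ is a semisimple module over $\kk\End_\mcC(X_i)$, and this is already part of the finite-object output, so it transfers verbatim. For uniqueness, if $W = \bigoplus_i W_i$ is another semisimple representation of $\mcC$ with character $\alpha$, then $W|_{\mcC_m}$ is a semisimple representation of $\mcC_m$ with character $\alpha|_{\mcC_m}$, hence isomorphic to $V^{(m)} \cong V|_{\mcC_m}$ by Proposition~\ref{pseudo_prop_finite}; one then upgrades this compatible family of isomorphisms to a single isomorphism $W\cong V$ by the same rigidification used to construct $V$ (choose isomorphisms $W|_{\mcC_m}\cong V|_{\mcC_m}$ and, using uniqueness to compare overlaps, patch; the freedom in each isomorphism is a unit in a product of matrix algebras, and the patching obstruction vanishes because each morphism is eventually in the system).

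The main obstacle I expect is the coherence/rigidification of the direct system: Proposition~\ref{pseudo_prop_finite} gives uniqueness only up to (non-canonical) isomorphism, so naively the $\theta_{m',m}$ form only a ``pseudo-functor'' and one must argue that the resulting colimit data genuinely assemble into one honest representation, and similarly that two such colimits are isomorphic. This is routine in spirit — it is the familiar statement that a directed system of vector spaces (or representations) with a compatible-up-to-iso family of transition maps has a well-defined colimit once one pins down representatives — but it requires a little care because the automorphism groups $\Aut(V^{(m)})$ are nontrivial; the clean way to dispatch it is to not take a colimit of the $V^{(m)}$ at all, but rather to \emph{define} $V$ directly by setting $V_i$ to be a chosen $\kk$-space of dimension $\deg_\alpha(X_i)$ and using a chosen isomorphism, for each $m$ with $X_i,X_j\in S_m$, between $V^{(m)}$ and $\bigoplus V_k$, then checking the induced maps on morphisms are unambiguous via the uniqueness in Proposition~\ref{pseudo_prop_finite} applied to the overlap $\mcC_m\cap\mcC_{m'}=\mcC_{\min(m,m')}$.
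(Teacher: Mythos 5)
Your proposal is essentially the same argument as the paper's: enumerate the objects, apply Proposition~\ref{pseudo_prop_finite} to the finite full subcategories $\mcC_m$, and use the uniqueness clause to fix compatible isomorphisms and extend inductively to a representation of all of $\mcC$. The paper phrases the rigidification step slightly more implicitly (at stage $N+1$ it simply fixes an isomorphism $V^N\cong W_1$ and keeps going, rather than discussing the pseudo-functoriality issue explicitly), but the mechanism is the same; one small wording quibble is that $\End_{\kk\mcC_m}(X_i)$ is literally \emph{equal} to $\End_{\kk\mcC_{m'}}(X_i)$ for full subcategories, not a quotient, though your conclusion that semisimplicity restricts is correct (take $eWe$ for the idempotent $e$ projecting onto the smaller direct sum).
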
 

\begin{proof} Enumerate objects of $\mcC$ by $X_1,X_2,\dots$. Consider the full subcategory $\mcC_N$ generated by objects $X_1,\dots, X_N$ and restriction $\alpha_N$ of $\alpha$ to $\mcC_N$.  There is a unique, up to isomorphism, semisimple representation $V^N$ of $\mcC_N$ with the trace $\alpha_N$. Adding the direct sum object $X=X_1\oplus \ldots\oplus X_N$, representation $V^N$ is a semisimple representation of $\kk\End(X)$. 

Form the direct sum $X\oplus X_{N+1}\cong \oplus_{i=1}^{N+1}X_i$. Pseudo-character $\alpha_{N+1}$ is the trace of a unique, up to isomorphism, semisimple representation $W$ of $\mcC_{N+1}$. Decompose $W\cong W_1\oplus W_2$, where $W_1$ is a representation of $\End(X)$ and $W_2$ a representation of $\End_{\mcC}(X_{N+1})$. Note that morphisms between $X$ and $X_{N+1}$ induce linear maps maps between $W_1,W_2$.

Fix an isomorphism $V^N\cong W_1$ of semisimple representations of $\mcC_N$. Via this isomorphism, we view $V^N\oplus W_2$ as a representation of $\mcC_{N+1}$. It is a semisimple representation with the trace $\alpha_{N+1}$. 

Let $G_N=\Aut_{\mcC_N}(V^N)$ be the group of automorphisms of the representation $V^N$ of $\mcC_N$. Restriction from $\mcC_{N+1}$ to $\mcC_N$ induces a homomorphism $\rho_N:G_{N+1}\lra G_N$.  

Starting with a fixed $N$ and continuing by induction on $N$, we obtain a representation $V$ of $\mcC$. It has a decomposition 
$V\cong \oplus_{i=1}^{\infty} V_i$, 
a morphism $f\in\Hom_{\mcC}(X_i,X_j)$ induces a $\kk$-linear map $V_i\lra V_j$, with these maps compatible with the composition of morphisms. Each $V_i$ is a finite-dimensional $\kk$-vector space and $\dim_{\kk}(V_i)=\deg_{\alpha}(X_i)$. Representation $V$ of $\mcC$ is semisimple and each $V_i$ is a semisimple representation of $\End_{\mcC}(X_i)$ with the trace given by restricting $\alpha$ to $\End_{\mcC}(X_i)$. Representation $V$ is a unique (up to isomorphism) semisimple representation of $\mcC$ with the trace given by $\alpha$.  
\end{proof} 

\begin{remark} 
Let $G_N=\Aut_{\mcC_N}(V^N)$ be the group of automorphisms of the representation $V^N$ of $\mcC_N$. Restriction from $\mcC_{N+1}$ to $\mcC_N$ induces a homomorphism $\rho_N:G_{N+1}\lra G_N$. The group of automorphisms of $V$, 
\[
\Aut_{\mcC}(V) \cong \lim_{\rho_N} G_N, 
\]
is isomorphic to the inverse limit of groups and homomorphisms 
\[
G_1 \stackrel{\rho_1}{\longleftarrow} G_2 \stackrel{\rho_2}{\longleftarrow} G_3\stackrel{\rho_3}{\longleftarrow} \dots 
\]
\end{remark}

 \begin{remark} Representations of $\mcC$ that we are considering may be called  \emph{left} representations of $\mcC$, being covariant functors $\mcC\lra \kk\mathsf{-fdvect}$ into the category of \emph{finite-dimensional} vector spaces. Right representations can be defined as contravariant functors from $\mcC$ to $\kk\mathsf{-fdvect}$. Composing either a covariant or a contravariant functor from $\mcC$ to $\kk\mathsf{-fdvect}$ with taking the dual of a finite-dimensional vector space induces a contravariant equivalence between categories of left and right representations of $\mcC$. It also gives a bijection between isomorphism classes of covariant and contravariant functors to $\kk\mathsf{-fdvect}$. This bijection respects semisimplicity. In particular, the propositions above hold for right representations of $\mcC$ as well. 
 \end{remark} 

 We do not expect difficulties with extending Proposition~\ref{pseudo_prop_countable} to small categories with uncountably many objects but will not attempt such an extension in the present paper. 


\subsection{Pseudocharacters for Brauer categories with boundary} \label{subsec_pseudo_boundary}

Let us now discuss pseudocharacters for Brauer categories with endpoints. These categories were defined in Section~\ref{subsec_presheaves} (and a more restricted collection of examples--in Section~\ref{subsec_Brauer_boundary}). 
Let us now discuss pseudocharacters for Brauer categories with endpoints. These categories were defined in Section~\ref{subsec_presheaves} (and a more restricted collection of examples--in Section~\ref{subsec_Brauer_boundary}). 

Suppose given a datum $(\mcC,\mcG_{\ell},\mcG_r)$ as in Section~\ref{subsec_presheaves} of a small category $\mcC$ and functors 
\[
\mcG_{\ell}: \mcC^{\op}\lra \Sets , \ \ \mcG_r: \mcC \lra \Sets 
\]
in \eqref{eq_G0}, \eqref{eq_G1}. One can then form the rigid symmetric monoidal Brauer category with boundary $\mcB'(\mcC):=\mcB(\mcC,\mcG_{\ell},\mcG_r)$. 

Furthermore, pick an evaluation function $\alpha$ as in \eqref{alpha_boundary} valued in $R\supset \Q$,  
\begin{equation}\label{alpha_boundary_2} 
\alpha: \LcC \sqcup (\GCcross) \lra R. 
\end{equation} 

Suppose that evaluation $\alpha$ on endomorphisms of $\one$ in $\mcB'(\mcC)$ is a pseudo-TQFT or a pseudocharacter, in the sense of Definition~\ref{def_pseudo}. Then an object $X$ of $\mcC$ has some degree $d=\deg_{\alpha}(X)$. The condition that any closure of the $(d+1)$-antisymmetrizer $e^-_{X,d+1}$ vanishes upon applying $\alpha$, see Figure~\ref{figure-D2}, can be written for $\mcB'(\mcC)$ as follows.

Endomorphism $h$ of $X^{\otimes (d+1)}$ in Figure~\ref{figure-D2} can be written as a partial bijection of the set of endpoints $\{1,2,\dots, d+1\}$, where each arc and each half-interval in a bijection carry a label: an endomorphism of $X$ for an arc and an element of $\mcG_{\ell}(X)$ or $\mcG_r(X$ for a half-interval, see Figure~\ref{figure-BB1}. 

\vspace{0.1in} 

\input{figure-BB1}

Applying a permutation at a top or bottom of the antisymmetrizer $e^-_{X,d+1}$ changes it by at most a sign. Applying permutations to $h$ at the top and bottom it can be reduced to the form where all half-intervals are to the right of arcs and arcs are disjoint (define the identity permutation of $\{1,\dots, d+1-m\}$, where $m$ is the number of half-intervals at the top (and at the bottom), see Figure~\ref{figure-BB2}.  

\vspace{0.07in} 

\input{figure-BB2}

The same permutations, on the antisymmetrizer side, at most add the minus sign to it. 
Composing the resulting morphism $h'$ with the antisymmetrizer and closing up the composition result in the diagram in Figure~\ref{figure-BB3}.

\vspace{0.07in} 

\input{figure-BB3}

One can now expand the antisymmetrizer, fully or partially, to write it as the alternating sum of terms given by products of evaluation $\alpha$ on decorated circles and intervals. As an example, Figure~\ref{figure-BB4} shows an expansion for $d=2$ and $m=1$. 

\input{figure-BB4}

\vspace{0.07in}

Let us return to the Brauer category $\mcB'(\mcC)$ and further assume that pseudocharacter evaluation $\alpha$ takes values in an algebraically closed field $\kk$ of characteristic $0$. 

\begin{prop} \label{prop_brauer_ss}
Assume that $\mcC$ has finitely many objects and 
evaluation 
\begin{equation}\label{alpha_boundary_3} 
\alpha: \LcC \sqcup (\GCcross) \lra \kk, \ \ \Char(\kk)=0, \ \overline{\kk}=\kk,  
\end{equation} 
is a pseudocharacter. Then $\alpha$ is the  character of some semisimple representation $V$ of $(\mcC,\mcG_{\ell},\mcG_r)$.  The representation $V$ is unique, up to isomorphism. 
\end{prop}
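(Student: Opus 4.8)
The plan is to deduce this from Proposition~\ref{pseudo_prop_finite} by absorbing the presheaf data $\mcG_{\ell},\mcG_r$ into a single extra object. First I would linearize over $\kk$: replace $\mcC$ by $\kk\mcC$, the presheaves by the induced $\kk$-linear functors $\kk\mcG_r\colon\kk\mcC\to\kk\dmod$ and $\kk\mcG_{\ell}\colon(\kk\mcC)^{\op}\to\kk\dmod$, and extend $\alpha$ $\kk$-linearly; the equivalence relation defining $\GCcross$ says precisely that $\alpha$ descends to a pairing $\kk\mcG_{\ell}(X)\otimes_{\kk}\kk\mcG_r(X)\to\kk$ compatible with the two $\kk\mcC$-actions. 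Then I would build the $\kk$-linear category $\widehat{\mcC}$ with object set $\Ob(\mcC)\sqcup\{\star\}$: set $\Hom_{\widehat{\mcC}}(X,Y):=\Hom_{\kk\mcC}(X,Y)\oplus\bigl(\kk\mcG_r(Y)\otimes_{\kk}\kk\mcG_{\ell}(X)\bigr)$ for $X,Y\in\Ob(\mcC)$, $\Hom_{\widehat{\mcC}}(\star,X):=\kk\mcG_r(X)$, $\Hom_{\widehat{\mcC}}(X,\star):=\kk\mcG_{\ell}(X)$, and $\End_{\widehat{\mcC}}(\star):=\kk\cdot\id_{\star}$, with composition dictated by the functorialities of $\mcG_r,\mcG_{\ell}$ together with the single new rule $g_{\ell}\circ g_r:=\alpha(g_{\ell}\times g_r)\,\id_{\star}$ for $g_r\in\mcG_r(X)$, $g_{\ell}\in\mcG_{\ell}(X)$. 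Associativity of the mixed triple compositions is exactly the $\GCcross$ relation, so $\widehat{\mcC}$ is a well-defined small $\kk$-linear category with finitely many ($n+1$) objects.

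The point of $\widehat{\mcC}$ is that a representation $V$ of $(\mcC,\mcG_{\ell},\mcG_r)$ with character $\alpha$ — a functor $V\colon\mcC\to\kk\mathsf{-fdvect}$ together with natural \emph{state} maps $\mcG_r(X)\to V(X)$ and \emph{costate} maps $\mcG_{\ell}(X)\to V(X)^{\ast}$, such that the loops of $\mcC$ acquire the prescribed traces and $\langle\text{costate}(g_{\ell}),\text{state}(g_r)\rangle=\alpha(g_{\ell}\times g_r)$ — is the same datum as a representation $\widehat V\colon\widehat{\mcC}\to\kk\mathsf{-fdvect}$ with $\widehat V(\star)\cong\kk$ and character $\widehat\alpha$, where $\widehat\alpha\colon L(\widehat{\mcC})\to\kk$ is the evaluation that agrees with $\alpha$ on $\LcC$, sends the loop $(X,g_r\otimes g_{\ell})$ (equivalently $(\star,g_{\ell}\circ g_r)$) to $\alpha(g_{\ell}\times g_r)$, and sends $\SS_{\star}$ to $1$; well-definedness of $\widehat\alpha$ follows from the compatibilities just recorded, and isomorphisms correspond under this dictionary.

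Next I would verify that $\widehat\alpha$ is a pseudocharacter of $\widehat{\mcC}$ in the sense of Definition~\ref{def_pseudo}. For $\star$ one has $\psdeg_{\widehat\alpha}(\star)=1$: $e^-_{\star,1}=\id_{\star}$ closes up to $\widehat\alpha(\SS_{\star})=1\neq 0$, while any endomorphism of $\star^{\otimes 2}$ in $\mcB(\widehat{\mcC})$ is, since $\End_{\widehat{\mcC}}(\star)=\kk$, a scalar times a permutation times an endomorphism of $\one$, so every closure of $e^-_{\star,2}$ is a scalar multiple of $\widehat\alpha(\SS_{\star})^2-\widehat\alpha(\SS_{\star})=1-1=0$. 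For an object $X$ of $\mcC$ the key observation is that $\mcB(\widehat{\mcC})$ reproduces the inner-endpoint structure of $\mcB'(\mcC):=\mcB(\mcC,\mcG_{\ell},\mcG_r)$: because $\End_{\widehat{\mcC}}(\star)=\kk$, the object $\star^{+}$ behaves like the monoidal unit, an arc $X\to\star\to X$ (a summand of the enlarged $\End_{\widehat{\mcC}}(X)$) corresponds to a pair of half-intervals labelled by an element of $\mcG_r(X)$ and an element of $\mcG_{\ell}(X)$, and under this correspondence $\widehat\alpha$ of a closure of $e^-_{X,n}$ in $\mcB(\widehat{\mcC})$ equals $\alpha$ of the matching closure of $e^-_{X^{+},n}$ in $\mcB'(\mcC)$, exactly as organized diagrammatically in Figures~\ref{figure-BB1}–\ref{figure-BB3}. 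Hence $\psdeg_{\widehat\alpha}(X)\le\psdeg_{\alpha}(X^{+})<\infty$ by the hypothesis that $\alpha$ is a pseudocharacter on $\mcB'(\mcC)$, so $\widehat\alpha$ is a pseudocharacter.

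Finally, since $\widehat{\mcC}$ has finitely many objects and $\kk$ is algebraically closed of characteristic $0$, the $\kk$-linear form of Proposition~\ref{pseudo_prop_finite} produces a semisimple representation $\widehat V$ of $\widehat{\mcC}$, unique up to isomorphism, with character $\widehat\alpha$; since $\widehat\alpha(\SS_{\star})=1$ and $\Char(\kk)=0$, $\dim_{\kk}\widehat V(\star)=\psdeg_{\widehat\alpha}(\star)=1$, so $\widehat V(\star)\cong\kk$. Reading $\widehat V$ back through the dictionary of the second paragraph yields the desired semisimple representation $V$ of $(\mcC,\mcG_{\ell},\mcG_r)$ with character $\alpha$, and uniqueness of $V$ up to isomorphism follows from uniqueness of $\widehat V$, since any isomorphism $\widehat V\cong\widehat V'$ restricts to isomorphisms on the $\mcC$-part and on the $\star$-component. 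The step I expect to be the main obstacle is making the identification in the third paragraph fully precise — matching closures of $e^-_{X,n}$ in $\mcB(\widehat{\mcC})$ with closures of $e^-_{X^{+},n}$ in $\mcB'(\mcC)$, i.e.\ showing that adjoining $\star$ with $\End_{\widehat{\mcC}}(\star)=\kk$ and the $\alpha$-twisted composition recreates, at the level of rigid symmetric monoidal envelopes, precisely the Brauer category with boundary, so that the degree of each original object is unchanged; everything else is a routine adaptation of Propositions~\ref{prop_when_pseudo} and~\ref{pseudo_prop_finite}.
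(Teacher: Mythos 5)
Your proof is correct and follows essentially the same approach as the paper: absorb the boundary data of $(\mcC,\mcG_\ell,\mcG_r)$ by adjoining a formal unit object, observe that this object necessarily has degree $1$, and reduce to Proposition~\ref{pseudo_prop_finite}. The only packaging difference is that the paper adjoins $X_0:=\one$ inside the quotient category $\mcB'_\alpha(\mcC)$, forms the direct sum $X=X_0\oplus X_1\oplus\dots\oplus X_k$ in the additive closure, and works with the single-object category $\mcC_X=\End(X)$, whereas you keep the objects of $\mcC$ separate and build the $(n+1)$-object $\kk$-linear category $\widehat{\mcC}$ directly from the presheaf data, with associativity guaranteed by the $\GCcross$ relation exactly as you say; both then invoke the same finite-object lifting result, and the identification of degrees and closures that you flag as the ``main obstacle'' is the same identification the paper uses (its dashed-line relations in Figure~\ref{figure-BB5}) and is correct as you describe it.
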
 

Note that a representation $V$ of $(\mcC,\mcG_{\ell},\mcG_r)$ consists of $\kk$-vector spaces $V_X$, for each $X\in\Ob(\mcC)$, linear maps $V_X\lra V_Y$ for each morphism $\beta\in\Hom_C(X,Y)$, a vector $v_{\gamma}\in V_X$ for each $\gamma\in \mcG_r(X)$, a covector $f_{\tau}\in V_X^{\ast}$ for each $\tau\in\mcG_{\ell}(X)$ subject to the standard compatibility relations on them given that $\mcC$ is a category and $\mcG_{\ell},\mcG_r$ are functors from it to $\Sets$. 

\begin{proof}
    Form the $\kk$-linear closure $\kk\mcC$ of  $\mcC$ and extend evaluation $\alpha$ $\kk$-linearly to it. This results in the rigid symmetric monoidal category $\mcB'_{\alpha}(\mcC)$.  The objects $X_1, \dots, X_k$ of $\mcC$ are naturally the objects of $\mcB'_{\alpha}(\mcC)$ as well. 
    
    Recall that the endomorphism ring of $\one$ in $\mcB'(\mcC)$ is generated by floating decorated circles and intervals. Circles may carry  dots (labeling morphisms in $\mcC$) and regions of the circle between the dots are colored by corresponding objects of $\mcC$. Endpoints of intervals are decorated by elements of $\mcG_{\ell}(X_i),\mcG_{\ell}(X_j)$, and intervals themselves are decorated by elements of $\GCcross$. 

  Upon evaluation $\alpha$, floating intervals and circles become elements of $\kk$, and
  $\End(\one)\cong \kk$ in the category  $\mcB'_{\alpha}(\mcC)$. 
  Let $X_0=\one$ be the identity object of $\mcB'_{\alpha}(\mcC)$. 

  We formally form the direct sum $X=X_0\oplus X_1\oplus \dots \oplus X_k$. It can be viewed as an object in the additive closure of $\mcB_{\alpha}'(\mcC)$ and we denote its endomorphism ring in the additive closure by $\End(X)$. Consider $\kk$-linear category $\mcC_X$ with a single object $\widetilde{X}$ and endomorphism ring $\End_{\mcC_X}\left(\widetilde{X}\right) = \End(X)$. 

Pseudo-character $\alpha$ of $\mcB'(\mcC)$ extends to a pseudo-character $\widetilde{\alpha}$ of $\mcB(\mcC_X)$. Evaluation $\widetilde{\alpha}$ is build from $\alpha$ as follows. Any endomorphism $t\in \End(X_i)$ in $\mcC$ gives an endomorphism $\widetilde{t}:=\iota_i\circ t\circ p_i$ of $\widetilde{X}$ by composing with the inclusion and projection $X_i \stackrel{\iota_i}{\lra} X$, $X\stackrel{p_i}{\lra} X_i$ and we define $\widetilde{\alpha}\left(\widetilde{t}\right) = \alpha(t)$.  

Endomorphisms $X_i\stackrel{u_i}{\lra} \one \stackrel{v_i}{\lra}  X_i$ that factor through $\one$ are evaluated to $\alpha(u_iv_i)$. Note that $u_iv_i$ is a linear combination of elements of $\GCcross$, and applying $\alpha$ to them produces elements in the ground field. Diagrammatically, $u\times v\in \GCcross$ is a floating interval, and we are, in a way, turning it into a circle by closing it up with a special line (dotted line in Figure~\ref{figure-BB5}) that represents the unit object $\one$. The evaluation of $u\times v$ is then viewed as that of a decorated circle, not a decorated interval. 

\vspace{0.1in} 

\input{figure-BB5}

\vspace{0.1in} 

Additional relations on dotted lines are shown in Figure~\ref{figure-BB5}. They follow from $\End(\one)\cong \kk$ in $\mcB'_{\alpha}(\mcC)$. 

We have 
\[
\deg_{\widetilde{\alpha}}(X) = \sum_{i=1}^k \deg_{\alpha}(X_i)+1,
\]
and $\widetilde{\alpha}$ is a pseudocharacter. By Proposition~\ref{pseudo_prop_finite} it is then the character of the unique (up to isomorphism) semisimple representation of $\mcB(C_X)$. Applying projections onto $X_i$, $i=0,1,\dots, k$ shows that $\alpha$ is the character of a unique (up to isomorphism) semisimple representation of $\mcB'(\mcC)$.    
\end{proof}

It is straightforward to extend this proposition to $\mcC$ with countably many objects.


\subsection{Pseudo-holonomies}\label{subsec_holonomies} 

 Let $M$ be a compact smooth connected $n$-manifold. To $M$ associate the category $\mcC_M$ with objects -- points of $M$. A morphism from $p_0$ to $p_1$ is a piecewise smooth path $\gamma:[0,1]\lra M$ with $p_0=\gamma(0)$ to $p_1=\gamma(1)$. These paths are considered up to backtracking, see Figure~\ref{figure-F1}, and reparametrizations (piecewise-smooth orientation-preserving homeomorphisms of $[0,1]$). 

\vspace{0.1in} 

\input{figure-F1}

Composition of morphisms is given by a composition of path. Category $\mcC_M$ is a groupoid, with the inverse of any path $\gamma$ being the reverse path $\overline{\gamma}$. This category is equivalent to its skeleton subcategory, given by picking a point $p\in M$ and restricting to the full subcategory $\mcC_M(p)$ of $\mcC_M$ consisting of endomorphisms of the object $p$. 

Suppose given a real vector bundle $E$ over $M$ and a connection $\nabla$ on $E$. It can be described by an $\R$-linear map 
\[
\nabla : \Gamma(E) \lra \Gamma(T^{\ast}M \otimes E) 
\]
where $\Gamma(E)$ is the space of smooth sections of $E$, and $\nabla(fs)=df\otimes s + f\nabla(s)$, for a smooth function $f$ and a smooth section $s$ of $E$. 

A connection induces a parallel transport map $\gamma_{\ast}: E_{\gamma(0)}\lra E_{\gamma(1)}$ of fibers  of the bundle along any piecewise-smooth path $\gamma$. A piecewise-smooth loop $\gamma:[0,1]\lra M$, $\gamma(0)=\gamma(1)$  induces a linear automorphism or holonomy of $\nabla$ along the path: $\gamma_{\ast}\in \mathsf{GL}(E_{\gamma(0)})$. For a closed path $\gamma$ trace of the holonomy $\tr_{\nabla}(\gamma,p)\in \R$ does not depend on the choice of a basepoint $p=\gamma(x),x\in[0,1)$ on the loop. Denote this trace by $\tr_{\nabla}(\gamma)$. It also only depends on the gauge equivalence class of the connection. 

The following proposition is clear. 

\begin{prop} For any connection $\nabla$ on  a bundle $E$, traces 
$\tr_{\nabla}(\gamma)$ over all loops $\gamma$ constitute an $\R$-valued pseudocharacter of the Brauer category $\mcB(\mcC_M)$ of degree $\dim(E)$.
\end{prop}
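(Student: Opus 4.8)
The plan is to realize the function $\gamma\mapsto\tr_{\nabla}(\gamma)$ as the character of a genuine TQFT on $\mcB(\mcC_M)$ and then invoke the fact, recorded in Section~\ref{subsec_realization}, that the character of a TQFT is always a pseudo-TQFT. So the work splits into three tasks: build the TQFT out of $\nabla$, identify its character, and pin down the degree.

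First I would package parallel transport as a functor. Let $V_{\nabla}\colon\mcC_M\lra\R\dfgpmod$ send a point $p$ to the fibre $E_p$ and a piecewise-smooth path $\gamma\colon p_0\to p_1$ to the parallel transport isomorphism $\gamma_{\ast}\colon E_{p_0}\to E_{p_1}$. This is well defined on morphisms of $\mcC_M$: $\gamma_{\ast}$ is unchanged under an orientation-preserving reparametrization, $\overline{\gamma}_{\ast}=(\gamma_{\ast})^{-1}$ so a backtracking pair transports trivially, transport along a concatenation is the composite of the transports, and the constant path gives the identity. As $M$ is connected, $\dim_{\R}E_p=r:=\dim(E)$ for every $p$. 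Since $\R\dfgpmod$, the category of finite-dimensional real vector spaces, is rigid symmetric monoidal (replace it by an equivalent strict category if one insists on strictness), the universal property of the Brauer envelope --- the Proposition on the universal functor $\iota_{\mcC}$ in Section~\ref{subsec_brauer} --- extends $V_{\nabla}$ uniquely to a rigid symmetric monoidal functor $\mcF:=\mcB(V_{\nabla})\colon\mcB(\mcC_M)\lra\R\dfgpmod$, which is an $\R$-valued TQFT in the sense of \eqref{eq_func1}.

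Next I would compute the character $\alpha_{\mcF}$ of $\mcF$. A loop in $\mcB(\mcC_M)$ reduces, as in Figure~\ref{figure-A7}, to a circle carrying a single object $p\in\Ob(\mcC_M)$ and a single endomorphism $\gamma\in\End_{\mcC_M}(p)$, i.e.\ a based loop $\gamma$ at $p$; a rigid symmetric monoidal functor carries such a circle (a categorical trace in $\mcB(\mcC_M)$) to the categorical trace in $\R\dfgpmod$ of the image of its label, which for the standard duality is the ordinary vector-space trace. Hence $\alpha_{\mcF}$ evaluates this circle to $\tr_{E_p}(V_{\nabla}(\gamma))=\tr_{E_p}(\gamma_{\ast})=\tr_{\nabla}(\gamma)$. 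Cyclicity of the trace makes this compatible with the loop equivalence relation \eqref{equation:equivLcC}, which is precisely the basepoint- and gauge-invariance already noted in the excerpt. Thus $\gamma\mapsto\tr_{\nabla}(\gamma)$ is exactly the character $\alpha_{\mcF}$, and since $\Q\subset\R$, the discussion preceding Definition~\ref{def_character} shows $\alpha_{\mcF}$ is a pseudo-TQFT of $\mcB(\mcC_M)$: for every object $X$ the module $\mcF(X)$ is finite-dimensional, hence $\mcF(e^-_{X,n})=0$ for $n>\dim_{\R}\mcF(X)$ and therefore $\psdeg_{\alpha_{\mcF}}(X)\le\dim_{\R}\mcF(X)<\infty$.

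It remains to read off the degree on a generating object $p^{+}$. Since $\dim_{\R}E_p=r$, the antisymmetrizer $e^-_{p^{+},r+1}$ is sent by $\mcF$ to the zero endomorphism of $E_p^{\otimes(r+1)}$ (it factors through $\Lambda^{r+1}E_p=0$), so every closure of $e^-_{p^{+},r+1}$ evaluates to $0$; on the other hand the closure of $e^-_{p^{+},r}$ by $r$ concentric arcs evaluates to $\tr_{E_p^{\otimes r}}(e^-_{p^{+},r})=\sum_{\sigma\in S_r}(-1)^{\ell(\sigma)}r^{c(\sigma)}=r(r-1)\cdots 1=r!$, where $c(\sigma)$ is the number of cycles of $\sigma$, and this is nonzero in $\R$ because $\Char(\R)=0$. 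Hence $\psdeg_{\alpha_{\mcF}}(p^{+})=r=\dim(E)$ for each $p$, and the remaining objects of $\mcB(\mcC_M)$, being tensor products of the $p^{+}$ and their duals, contribute no new obstruction; so $\alpha_{\mcF}$ is a pseudocharacter of degree $\dim(E)$. The argument is mostly assembly of the existing machinery; the single computational input is the lower bound just used --- producing a nonzero closure of the $r$-antisymmetrizer, equivalently noting that $\mcF(e^-_{p^{+},r})$ is $r!$ times the rank-one projection onto $\Lambda^{r}E_p$ --- and the only step demanding genuine care is the verification that parallel transport descends to $\mcC_M$ modulo reparametrization and backtracking, which is where the differential-geometric content actually enters.
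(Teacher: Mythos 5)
Your proof is correct. The paper gives no proof at all (it says ``The following proposition is clear''), and your argument is precisely the intended one: parallel transport defines a functor $V_{\nabla}:\mcC_M\to\R\dfgpmod$ which, by the universal property of the Brauer envelope, extends to a rigid symmetric monoidal functor (TQFT) $\mcF$ with character $\tr_{\nabla}$; hence $\tr_{\nabla}$ is automatically a pseudo-TQFT, and the degree of each point object $p^{+}$ is $\rk(E)$ because $\Lambda^{\rk(E)+1}E_p=0$ supplies the vanishing of closures of $e^-_{p^{+},\rk(E)+1}$, while the concentric closure of $e^-_{p^{+},\rk(E)}$ evaluates to $(\rk E)!\neq 0$, which you compute correctly.
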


Vice versa, suppose given a pseudocharacter of $\mcB(\mcC_M)$ of degree $n$. Fixing a point $p\in M$ the pseudocharacter restricts to a pseudocharacter on the group $\End_{\mcC_M}(p)$. This pseudocharacter has degree $n$ and there exists a unique, up to isomorphism, semisimple representation $E_p$ of the group $\End_{\mcC_M}(p)$ of dimension $n$. Picking a path $\gamma_{q}$ from $q$ to $p$, for each point $q\in M$ allows to view $E_p$ as a representation $E_{q}$ of $\End_{\mcC_M}(q)$ and defines a semisimple representation of the Brauer category $\mcB(\mcC_M)$. 

\begin{remark} \label{rm_equivalence} More generally, an equivalence $F:\mcC_1\lra \mcC_2$ of categories induces a bijection of loops in these categories $L(\mcC_1)\cong L(\mcC_2)$ and a bijection between $\kk$-evaluations. This bijection preserves the pseudocharacter property. Thus, $F$ induces a bijection between pseudocharacters of Brauer categories $\mcB(\mcC_1)$ and $\mcB(\mcC_2)$. The present example corresponds to the equivalence of categories $\mcC_M(p)\lra \mcC_M$ given by including the full subcategory of endomorphisms of $p$ into the path category $\mcC_M$, the latter a groupoid.  
\end{remark} 

One can ask under what conditions can representations $E_{q}$, over $q\in M$, be glued into a bundle $E$ over $M$ with fibers $V_q$ and the action of the path category $\mcC_M$ coming from a connection $\nabla$. We refer to \cite{Lew93,CP94,BEP22}, and references therein for detailed studies of related questions. Here, we just observe the presence of a map

\begin{center}
\begin{tikzpicture}[scale=0.6]

\node at (0,2) {Gauge-equivalence classes of};
\node at (0,1.25) {connections on $\R^n$-bundles over $M$ };
\draw[thick,->] (6,1.5) -- (8,1.5);
\node at (13,2) {$\R$-valued pseudocharacters};
\node at (13,1.25) {of $\mcB(\mcC_M)$ of degree $n$};

\end{tikzpicture}
    
\end{center}

\noindent 
and ask for additional conditions to add on both sides to make this map a bijection. Notice that the map is not injective, in general. Pick a manifold with $\pi_1(M)$ which admits a nonsemisimple representation $\pi_1(M)\lra \mathsf{GL}(n)$ and form the corresponding bundle with a flat connection over $M$. Semisimple reduction of that representation produces a bundle with a flat connection not gauge equivalent to the original one. These two flat connections give rise to the same pseudocharacter of $\mcC_M$.

\begin{remark}
\label{remark:equiv_cat_endpoints} An equivalence of categories induces a bijection of loops in them and a bijection between their evaluations, see Remark~\ref{rm_equivalence}. For Brauer categories with endpoints, see Section~\ref{subsec_presheaves}, an equivalence 
$\mcC_1\lra \mcC_2$ does not induce a bijection on evaluations. For example, an interval may be labelled by an element of $\mcG_{\ell,2} \times_{\mcC_2}\mcG_{r,2}$ which is not pulled back to an element of $\mcG_{\ell,1} \times_{\mcC_1}\mcG_{r,1}$ via the equivalence, where $\mcG_{\ell,i},\mcG_{r,i}$ are suitable (contravariant, resp. covariant) functors from $\mcC_i$ to $\Sets$, $i=1,2$.   
Rather, one needs to have a bijection 
\[
\mcG_{\ell,1} \times_{\mcC_1}\mcG_{r,1} \cong 
\mcG_{\ell,2} \times_{\mcC_2}\mcG_{r,2}
\]
to get the corresponding bijection on evaluations or further quotient out these sets to have a bijection.  

\end{remark}

%
%

\section{Two-dimensional pseudo-TQFTs}\label{sec_2D_pseudo}

To understand two-dimensional pseudo-TQFTs let us first discuss 2D TQFTs and their generating functions. 

\subsection{2D TQFTs and generating functions} 
A 2D TQFT is a symmetric monoidal functor $\mcF:\Cob_2\lra \kk\mathrm{-vect}$ from the category of oriented 2D cobordisms to the category of vector spaces over a field $\kk$. Such TQFTs are determined by commutative Frobenius algebras $(B,\varepsilon)$ where $B=\mcF(\SS^1)$ is the commutative algebra that $\mcF$ associates to a circle and $\varepsilon:B\lra \kk$ is a nondegenerate trace on $B$ given by the \emph{cap} cobordism (a 2-disk viewed as a cobordism from $\SS^1$ to the empty one-manifold). 

A two-torus with one boundary component, see Figure~\ref{figure_handle001}  on the left, defines an element $h_B\in B$ which we call \emph{the handle element} of $(B,\varepsilon)$. This element gives rise to the handle map $B\lra B$ taking $u\in B$ to $h_B u$. This is the map induced by the cobordism in Figure~\ref{figure_handle001} on the right. 

Let $\{u_1,\dots, u_r\}$ be a basis of $B$ and $\{v_1,\dots, v_r\}$ the dual basis so that $\varepsilon(u_i v_j)=\delta_{i,j}.$ Then 
\begin{equation}
    h_B \ = \ \sum_{i=1}^r u_i v_i . 
\end{equation}

Let $\alpha_{B,g}=\mcF(S_g)\in \kk$ be the value of the oriented genus $g$ surface in the TQFT $(B,\epsilon)$, where we suppress the dependence of the value on the trace $\varepsilon$. It can be computed in two ways,  
\begin{equation}\label{eq_surface_eval}
    \alpha_{B,g} = \varepsilon(h_B^g), \ \ g\ge 0; \ \ \ \alpha_{B,g} = \tr_B(h_B^{g-1}), \ \ g\ge 1
\end{equation}
by applying the trace $\varepsilon$ to the $g$-th power of the handle element and as the trace of the operator $h_B^{g-1}$ acting on $B$, for $g\ge 1$. The second formula follows by representing $S_g$ as the torus with additional $g-1$ holes, see Figure~\ref{figure-Q1} on the right. Necessarily, 
\begin{equation}\label{eq_dimension}
 \alpha_{B,1} \ = \ \dim(B) ,  
\end{equation} 
that is, the torus evaluates to the dimension of $B$.

\vspace{0.07in} 

Define the \emph{generating function} of the TQFT $(B,\varepsilon)$ by combining the values  $\alpha_{B,g}$ into power series in one formal variable $T$: 
\begin{equation}
Z_{(B,\varepsilon)}(T) \ := \ \sum_{g\ge 0} \alpha_{B,g} T^g, \ \in \kk[\![T]\!]. 
\end{equation} 
It follows from the corresponding fact for topological theories~\cite{Kh2,KS2,KKO} that $Z_{(B,\varepsilon)}(T)$ is a rational function of $T$. Coefficient at the linear term $T$ is the dimension of $B$, see \eqref{eq_dimension}.  

Algebra $B$ is commutative and any idempotent $e$ in $ B$, $e^2=e$, gives a direct product decomposition $B\cong eB\times (1-e)B$ which respects the Frobenius algebra structure, since one can just take the components of $1$ and the trace map $\varepsilon$ in each term of the direct product. The comultiplication is determined by the trace and has a product decomposition as well. The generating function for $(B,\varepsilon)$ is the sum of generating functions for the direct summands $(eB,\varepsilon|_{eB})$, $((1-e)B,\varepsilon|_{(1-e)B})$. Direct product decompositions of commutative Frobenius algebras and more general TQFTs were considered in~\cite{Sawin95}. 

To do a more detailed analysis of possible generating functions, assume from now on that field $\kk$ is algebraically closed, $\overline{\kk}=\kk$. Consider the \emph{handle subalgebra} $\kkB $ of $B$ generated by $h_B$. The minimal polynomial $P_{h_B}(x)$ for the operator of multiplication by $h_B$ is the same whether one considers the action of $h_B$ on $B$ or on $\kkB$. 

Over $\kk=\overline{\kk}$ polynomial $P_{h_B}(x)$ factors into linear terms according to  the  eigenvalues of $h_B$. If there is more than one eigenvalue, subalgebra $\kkB$ admits a system of idempotents that allow to factorize $(B,\epsilon)$ into a direct product of Frobenius algebras $(B_i,\varepsilon_i)$ where in each algebra the handle element $h_{B_i}$ has a unique generalized eigenvalue $\lambda_i$ so that $h_{B_i}-\lambda_i\id$ is nilpotent. 

Let us assume that $(B,\varepsilon)$ is of that form, so that $h_B-\lambda \id$ is nilpotent on $B$. Then $\tr_B(h_B^k)=\dim(B)\lambda^k$ and the generating function has the form 
\begin{equation}
     Z_{(B,\varepsilon)} \ = \ \varepsilon(1) + \sum_{g\ge 1} \tr_B(h_B^{g-1})T^g = \varepsilon(1) + \dim(B)\sum_{g\ge 1}\lambda^{g-1} T^g .     
\end{equation}
It is natural to split into two cases: 
\begin{eqnarray} \label{eq_case_1} 
    Z_{(B,\varepsilon)} & = & \varepsilon(1) + \dim(B) T, \ \ \ \mathrm{if} \ \ \lambda=0, \\ 
    \label{eq_case_2} 
    Z_{(B,\varepsilon)} & = & \varepsilon(1) + 
    \frac{\dim(B) T}{1-\lambda T}, \  \ \ 
    \mathrm{if} \ \ \lambda\not= 0. 
\end{eqnarray}
 Note that $\dim(B)\ge 1$. If $\dim(B)=1$ then $\varepsilon(1)=\lambda^{-1}$ since the handle element $h_B=\lambda\not=0$, and the generating 
 function is 
 \begin{equation}
     Z_{(B,\varepsilon)} \ = \lambda^{-1} + \frac{T}{1-\lambda T}= \frac{\lambda^{-1}}{1-\lambda T}, \ \ \lambda\in \kk^{\ast}. 
 \end{equation}
Any finite-dimensional commutative $\kk$-algebra is a product of local algebras. If $B$ has idempotents other than $0,1$, Frobenius algebra $(B,\varepsilon)$ can be further factored. We can then assume that $B$ is a local ring, with a unique maximal ideal $\mathfrak{m}$. Necessarily, $B/\mathfrak{m}\cong \kk$ and there is a decomposition $B\cong \kk \,1 \oplus \mathfrak{m}$.  

\vspace{0.07in} 

Assume now that $\dim(B)\ge 2$. Choose a basis $\{1,u_2,\dots, u_r\}$ of $B$, $u_i\in \mathfrak{m}$, $r=\dim(B)\ge 2$, which determines the dual basis $\{v_1,\dots, v_r\}$, with the handle element 
\begin{equation}\label{eq_handle_basis}
h_B= v_1+\sum_{i=2}^r u_iv_i\in v_1 + \mathfrak{m}.
\end{equation} 

(1) Consider first the case $\lambda=0$, see \eqref{eq_case_1}. Then $h_B\in \mathfrak{m}$, since it acts nilpotently on $B$. 
We see that $v_1\in \mathfrak{m}$. 

Take $B=\kk[x]/(x^m)$ and 
\[
\varepsilon(x^i)=\begin{cases} 1, & i=m-1, \\
  0, & 1\le i \le m-2, \\
  \mu, & i=0,  
\end{cases} 
\]
so that $\varepsilon$ takes values $\mu$ and $1$ on $1$ and $x^{m-1}$, respectively, and $0$ on all other powers of $x$. Here $\mu\in \kk$ with no restrictions.  Clearly, $\varepsilon$ is nondegenerate since it is nonzero on the unique minimal ideal $(x^{m-1})$ of $B$ and turns $B$ into a Frobenius algebra. 
Take a basis $\{1,x,x^2,\dots, x^{m-1}\}$. Then the dual basis is $\{x^{m-1},x^{m-2},\dots,x, 1-\mu x^{m-1}\}$, which can be checked directly. 
The handle element $h_B= r x^{m-1}$ is indeed nilpotent, $h_B^2=0$. The generating function is 
\begin{equation}
    Z_{(B,\varepsilon)} \ = \ \mu + m T, \ \  \mu\in \kk, r\ge 2. 
\end{equation}

(2) Consider now the case $\lambda\not= 0$, see \eqref{eq_case_2}. As earlier, we assume that $B$ is local,  $B\cong \kk 1 \oplus \mathfrak{m}$, with $\mathfrak{m}$ the maximal ideal.

It is known that the handle element $h_B$ for a commutative local Frobenius algebra $(B,\varepsilon)$ with $\kk$ algebraically closed, lies in the socle of $B$, see the answer by user Mare in the discussion in 
\cite{Schommer-Pries22} and Propositions 3.6.14 and 1.10.18 in~\cite{Zimmermann14}.

In particular, $h_B^2=0$ and $h_B$ is nilpotent if $B$ is not the ground field $\kk$. Hence, if $\lambda\not=0$, algebra $B$ is the direct product of one-dimensional Frobenius algebras $\kk$, with $\varepsilon(1)=\lambda^{-1}$ in each term, and its generating function 
\[
   Z_{(B,\varepsilon)}(T) \ = \   \frac{\dim(B) \lambda^{-1}}{1-\lambda T}.  
\]

\begin{remark} 
 For a one-dimensional algebra $B=\kk$ with comultiplication $\Delta(1)=\gamma 1\otimes 1 $ and trace 
$\varepsilon(1)=\gamma^{-1}$, $\gamma\in \kk^{\ast}$ the value of the handle element is $\gamma$ and the generating 
function is 
\[
 \gamma^{-1} + T + \gamma T^2 + \dots \ = \ \frac{\gamma^{-1}}{1-\gamma T}. 
\]
Note that the coefficient at $T$ is one, which is the dimension of $B$. 
\end{remark} 

Putting this information together gives a complete answer for possible generating functions of commutative Frobenius algebras over  $\kk=\overline{\kk}$. 

\begin{prop}\label{prop_gen_func} Let $(B,\varepsilon)$ be a commutative Frobenius algebra over an algebraically closed field $\kk$. Then its generating function $Z_{(B,\varepsilon)}(T)$ has one of the following forms:
\begin{eqnarray}
\label{eq_Z_type1} 
 Z_{(B,\varepsilon)} & = & \mu + m T + 
    \sum_{i=1}^s\frac{m_i \lambda_i^{-1}}{1-\lambda_i T}, \ \ \ \mu\in\kk, \   m\in\{2,3,\dots\}, \
  m_i\in \{1,2,\dots\}, \ 
  \lambda_i \in \kk^{\ast}, \\
  \label{eq_Z_type2} 
   Z_{(B,\varepsilon)} & = &   \sum_{i=1}^s\frac{m_i \lambda_i^{-1}}{1-\lambda_i T},\ \ \ 
   m_i\in \{1,2,\dots\}, \ 
  \lambda_i \in \kk^{\ast},
\end{eqnarray}
and all possible values of the parameters are realized. 
\end{prop}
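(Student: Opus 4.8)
The plan is to combine the structural reductions already established in the section with a careful bookkeeping of the parameters. The key point is that every commutative Frobenius algebra $(B,\varepsilon)$ over $\kk=\overline{\kk}$ decomposes, via its complete system of primitive idempotents, into a finite direct product $(B,\varepsilon)\cong\prod_j (B_j,\varepsilon_j)$ of \emph{local} commutative Frobenius algebras, and the generating function is additive over this decomposition: $Z_{(B,\varepsilon)}(T)=\sum_j Z_{(B_j,\varepsilon_j)}(T)$. So it suffices to show that each local summand contributes either a term $\mu_j + m_j T$ with $m_j\ge 2$ (the case where the handle element acts nilpotently, i.e.\ $\lambda_j=0$), a term $m_j\lambda_j^{-1}/(1-\lambda_j T)$ with $\lambda_j\in\kk^\ast$ and $m_j\ge 1$ (the one-dimensional case $B_j\cong\kk$ with $\varepsilon_j(1)=\lambda_j^{-1}$), and that a local summand of dimension $\ge 2$ can never produce the latter type of term.

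First I would record the two generating-function formulas \eqref{eq_surface_eval}, namely $\alpha_{B,g}=\varepsilon(1)$ for $g=0$ and $\alpha_{B,g}=\tr_B(h_B^{g-1})$ for $g\ge 1$, so that $Z_{(B,\varepsilon)}(T)=\varepsilon(1)+\sum_{g\ge1}\tr_B(h_B^{g-1})T^g$. For a local $(B_j,\varepsilon_j)$ the handle element $h_{B_j}$ has a single eigenvalue $\lambda_j$ (since $B_j/\mathfrak m_j\cong\kk$ and $h_{B_j}-\lambda_j\id$ is nilpotent), so $\tr_{B_j}(h_{B_j}^{k})=\dim(B_j)\,\lambda_j^{k}$ for all $k\ge0$. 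Summing the geometric series gives $Z_{(B_j,\varepsilon_j)}=\varepsilon_j(1)+\dim(B_j)\,T$ when $\lambda_j=0$, and $Z_{(B_j,\varepsilon_j)}=\varepsilon_j(1)+\dim(B_j)\,T/(1-\lambda_j T)$ when $\lambda_j\ne0$. Next I invoke the socle fact already cited in the text (the answer by user Mare in \cite{Schommer-Pries22} together with Propositions 3.6.14 and 1.10.18 in \cite{Zimmermann14}): for a local commutative Frobenius algebra over $\overline{\kk}$ the handle element lies in the socle; hence $h_{B_j}^2=0$, so $\lambda_j\ne0$ is only possible when $h_{B_j}$ is a nonzero scalar, which forces $B_j=\kk$. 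In the one-dimensional case $h_{B_j}=\lambda_j\in\kk^\ast$, $\varepsilon_j(1)=\lambda_j^{-1}$, and $Z_{(B_j,\varepsilon_j)}=\lambda_j^{-1}+T/(1-\lambda_j T)=\lambda_j^{-1}/(1-\lambda_j T)$. So every local summand of dimension $\ge2$ has $\lambda_j=0$ and contributes $\varepsilon_j(1)+\dim(B_j)\,T$, while every one-dimensional summand contributes $\lambda_j^{-1}/(1-\lambda_j T)$ for some $\lambda_j\in\kk^\ast$.

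Now I assemble the pieces. Collect the one-dimensional summands, grouping those with a common value $\lambda_i$ of $\lambda_j$; if $m_i$ of them share the value $\lambda_i$ their total contribution is $m_i\lambda_i^{-1}/(1-\lambda_i T)$, which produces the sum $\sum_{i=1}^s m_i\lambda_i^{-1}/(1-\lambda_i T)$ appearing in both \eqref{eq_Z_type1} and \eqref{eq_Z_type2}. If there are no local summands of dimension $\ge2$ this is the whole answer and we are in case \eqref{eq_Z_type2}. Otherwise, the local summands of dimension $\ge2$ together contribute $(\sum_j \varepsilon_j(1)) + (\sum_j \dim B_j)\,T$; setting $\mu$ equal to the constant term and $m=\sum_j\dim B_j\ge2$ (a sum of at least one term, each $\ge2$), we land in case \eqref{eq_Z_type1}. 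Finally, to see all parameter values are realized: the explicit family $B=\kk[x]/(x^m)$ with the trace displayed in part (1) of the discussion gives, for any $\mu\in\kk$ and any $m\ge2$, the summand $\mu+mT$, and $B=\kk$ with $\varepsilon(1)=\lambda^{-1}$ gives the summand $\lambda^{-1}/(1-\lambda T)$ for any $\lambda\in\kk^\ast$; taking a direct product of one copy of $\kk[x]/(x^m)$ (with appropriately chosen $\mu$) and $m_i$ copies of $\kk$ with parameter $\lambda_i$, for distinct $\lambda_1,\dots,\lambda_s\in\kk^\ast$, realizes an arbitrary right-hand side of \eqref{eq_Z_type1}, and omitting the $\kk[x]/(x^m)$ factor realizes \eqref{eq_Z_type2}.

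The main obstacle is the structural input that the handle element of a local commutative Frobenius algebra over $\overline{\kk}$ lies in the socle and is therefore square-zero; this is exactly what rules out any ``higher-dimensional'' contribution of the rational type $m\lambda^{-1}/(1-\lambda T)$ with $\dim>1$, and it is the one step I would lean on the cited literature for rather than reprove. Everything else is the additivity of $Z$ under the idempotent decomposition, the elementary geometric-series computation of $Z$ for a local algebra with single-eigenvalue handle element, and the explicit realization computations, all of which are routine given the material already developed in this subsection.
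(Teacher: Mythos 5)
Your proposal is correct and follows essentially the same route as the paper: decompose $(B,\varepsilon)$ into local Frobenius factors, observe that on a local factor the handle element has a single generalized eigenvalue so that $\tr(h^k)=\dim\cdot\lambda^k$, invoke the socle fact to force $\lambda=0$ when $\dim\ge 2$, and realize all parameters with $\kk[x]/(x^m)$ and copies of $\kk$. The only organizational difference is that the paper first factors along the handle subalgebra (so $h_B-\lambda\,\mathrm{id}$ becomes nilpotent) and only then passes to local summands, whereas you go directly to the primitive-idempotent decomposition and deduce the single-eigenvalue property from locality; the two orderings land in the same place and rely on the same structural inputs.
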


Notice that, in case \eqref{eq_Z_type1}, this generating function expands as 
\[
Z_{(B,\varepsilon)} \ = \ \left(\mu+\sum_{i=1}^s m_i\lambda_i^{-1}\right) + \left(m +\sum_{i=1}^s m_i\right)T + \dots 
\]
Any coefficient at the constant term can be realized via a suitable $\mu$, and $m\ge 2$ is required. 
If $\Char(\kk)=p$, coefficient $\alpha_1=\dim(B)$ of the generating function takes values in $\Z/p$, giving additional flexibility in picking $m$ if the rest of the parameters are fixed. 
Then \eqref{eq_Z_type2} is subsumed by \eqref{eq_Z_type2}, by replacing impossible values $m=0,1$ in \eqref{eq_Z_type1} by $m=0+p$, $1+p\ge 2$. 

\vspace{0.07in} 

We restrict our consideration to characteristic $0$ fields from now on,  where dimensions cannot be reduced modulo a prime. We then realize the above generating function by taking $B$ to be the direct product of the nilpotent algebra of dimension $m$ in example (1) above and $m_i$ copies of the one-dimensional Frobenius algebra with $\varepsilon(1)=\lambda_i^{-1}$, $1\le i\le s$.  

Let us emphasize that, specializing to $\Char(\kk)=0$, in $m\ge 2$ case the generating function has the most general form in \eqref{eq_Z_type1}, 
case $m=1$ is impossible (a Frobenius algebra with a nontrivial nilpotent ideal cannot have dimension $1$), and in $m=0$ case the coefficient at $T^0$ is determined by $m_i$'s and $\lambda_i$'s as 
$\sum_{i=1}^s m_i\lambda_i$. 

\vspace{0.1in} 

In case \eqref{eq_Z_type2}, which corresponds to a semisimple $B$ (product of $m_i$ copies of the one-dimensional Frobenius algebra with $\varepsilon(1)=\lambda_i^{-1}$, $1\le i\le s$), the generating function expands as 
\[
Z_{(B,\varepsilon)} \ = \ \left(\sum_{i=1}^s m_i\lambda_i^{-1}\right) + \left(\sum_{i=1}^s m_i\right) T + \dots 
\]
and there is no additional freedom to choose the constant term, unlike in \eqref{eq_Z_type1}. 


\subsection{Two-dimensional topological theories.}
A topological theory $\alpha$  for oriented 2-dimensional cobordisms with values in a field $\kk$ is determined by the generating function of its evaluations 
\begin{equation}
Z_{\alpha}(T) \ := \ \sum_{g\ge 0} \alpha_g T^g, \ \  \alpha_g \in \kk. 
\end{equation} 
Here $T$ is a formal variable and $\alpha_g=\alpha(S_g)$ is the evaluation of a connected oriented surface of genus $g$. We refer the reader to~\cite{Kh2,KS3,KKO} for details on topological theories in two dimensions.

It was observed in~\cite{Kh2} that state spaces $A_\alpha(m)$ of one-manifolds $\sqcup_m\SS^1$, the union of $m$ circles, are finite-dimensional if and only if 
\begin{equation}\label{eq_Z_alpha} 
 Z_{\alpha}(T) = \frac{P(T)}{Q(T)}, \ \  Q(0)\not= 0,
\end{equation}
for some polynomials $P(T),Q(T)$, that is, if and only if $Z_{\alpha}(T)$ is a rational function in $T$. Let us assume that from now on (such topological theories are called \emph{rational}). 
A topological theory $\alpha$ gives  a \emph{lax} symmetric monoidal functor 
\begin{equation}
    \mcF_{\alpha} \ : \ \Cob_2 \lra \kk\mathrm{-vect} 
\end{equation}
taking $\sqcup_m\SS^1$ to its state space $A_\alpha(m)$ and inducing maps for oriented cobordisms between one-manifolds. 

One may ask under what conditions on $Z_{\alpha}(T)$ can this theory be lifted to a two-dimensional oriented TQFT over $\kk$, which were considered earlier in this section and which correspond to commutative Frobenius $\kk$-algebras $(B,\varepsilon)$. 

A lifting $\phi$ is a natural transformation 
\begin{equation}\label{eq_map_phi} 
\phi \ : \ \mcF_{\alpha} \lra \mcF_{(B,\varepsilon)}
\end{equation} 
from the lax monoidal functor $\mcF_{\alpha}$ to the monoidal functor $\mcF_{(B,\varepsilon)}$ that  preserves evaluations of closed surfaces: 
\begin{equation}
\mcF_{\alpha}(S) = \alpha(S), 
\end{equation} 
for all closed surfaces $S$. 

It consists of a 
collection of $\kk$-linear maps  $\phi_m : A_{\alpha}(m)\lra B^{\otimes m}$, for all $m\ge 0$, that intertwine all maps induced by two-dimensional oriented cobordisms between unions of $m$ and $m'$ circles. 
A closed surface $S$ is an endomorphism of the unit object $\emptyset_{1}$ of $\Cob_2$ (the empty one-manifold). Under functors $\mcF_{\alpha}$ and $\mcF_{(B,\varepsilon)}$ it goes to endomorphisms of multiplication by $\mcF_{\alpha}(S)$ and $\mcF_{(B,\varepsilon)}(S)$ of 
\begin{equation}
\kk= \mcF_{\alpha}(\emptyset_1)=  \mcF_{(B,\varepsilon)}(\emptyset_1).  
\end{equation} 

Necessarily, maps $\phi_m$ are inclusions, since the pairing of $A_{\alpha}(m)$ with itself (via the \emph{tube} cobordism) is nondegenerate. 

\vspace{0.07in} 

Since $\phi$ preserves evaluations of closed surfaces, the generating functions of the topological theory $\alpha$ and of the TQFT for the commutative Frobenius algebra $(B,\varepsilon)$ are equal: 
\begin{equation}
    Z_{\alpha}(T) \ = \ Z_{(B,\varepsilon)}(T). 
\end{equation}

\begin{corollary} 
A 2-dimensional topological theory $\alpha$ over an algebraically closed field $\kk$ that embeds into a TQFT over $\kk$ has  generating function $Z_{\alpha}(T)$ as in \eqref{eq_Z_type1} or \eqref{eq_Z_type2}.  
\end{corollary}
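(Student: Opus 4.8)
The statement to prove is the Corollary: a $2$-dimensional topological theory $\alpha$ over an algebraically closed field $\kk$ that embeds into a TQFT over $\kk$ has generating function $Z_\alpha(T)$ of the form \eqref{eq_Z_type1} or \eqref{eq_Z_type2}. The plan is to assemble the pieces that are already in place: (i) an embedding $\phi\colon \mcF_\alpha \lra \mcF_{(B,\varepsilon)}$ into a TQFT exists by hypothesis, and $(B,\varepsilon)$ is a commutative Frobenius $\kk$-algebra; (ii) since $\phi$ preserves evaluations of closed surfaces, $Z_\alpha(T) = Z_{(B,\varepsilon)}(T)$, which is the displayed equation immediately preceding the corollary; (iii) Proposition~\ref{prop_gen_func} classifies all generating functions $Z_{(B,\varepsilon)}(T)$ of commutative Frobenius algebras over an algebraically closed field, and they are exactly those listed in \eqref{eq_Z_type1}--\eqref{eq_Z_type2}. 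Chaining these three observations gives the result immediately.

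So the proof would read, essentially in full: by assumption there is a lifting $\phi\colon\mcF_\alpha\lra\mcF_{(B,\varepsilon)}$ for some commutative Frobenius $\kk$-algebra $(B,\varepsilon)$. Evaluating on a closed genus-$g$ surface $S_g$, viewed as an endomorphism of the unit object $\emptyset_1$ of $\Cob_2$, and using that $\phi$ intertwines the induced endomorphisms of $\kk=\mcF_\alpha(\emptyset_1)=\mcF_{(B,\varepsilon)}(\emptyset_1)$, we get $\alpha_g=\alpha(S_g)=\mcF_\alpha(S_g)=\mcF_{(B,\varepsilon)}(S_g)=\alpha_{B,g}$ for every $g\ge 0$. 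Hence $Z_\alpha(T)=\sum_{g\ge 0}\alpha_g T^g=\sum_{g\ge 0}\alpha_{B,g}T^g=Z_{(B,\varepsilon)}(T)$. By Proposition~\ref{prop_gen_func}, since $\kk=\overline{\kk}$, the function $Z_{(B,\varepsilon)}(T)$ is of one of the two forms \eqref{eq_Z_type1} or \eqref{eq_Z_type2}, and therefore so is $Z_\alpha(T)$. $\square$

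Since the corollary is a direct consequence of results already established, there is no real obstacle internal to its proof; the genuine work has been done upstream, in Proposition~\ref{prop_gen_func} (the classification of generating functions of commutative Frobenius algebras over $\overline{\kk}$, which rests on the socle argument citing~\cite{Schommer-Pries22,Zimmermann14}) and in the discussion of liftings $\phi$ and the equality $Z_\alpha=Z_{(B,\varepsilon)}$ preceding the corollary. The only point requiring the slightest care is making sure the hypothesis ``embeds into a TQFT'' is used correctly: one needs that a lifting $\phi$ in the sense of \eqref{eq_map_phi} preserves evaluations of closed surfaces (which is part of the definition of lifting given in the text), so that the coefficient-by-coefficient identification $\alpha_g=\alpha_{B,g}$ is legitimate; the embedding (injectivity of the $\phi_m$) is automatic from nondegeneracy of the tube pairing but is not actually needed for the equality of generating functions. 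One might optionally add the converse remark --- that every function of the form \eqref{eq_Z_type1} or \eqref{eq_Z_type2} is realized by some TQFT, hence is the generating function of a liftable topological theory --- since the ``all possible values are realized'' clause of Proposition~\ref{prop_gen_func} supplies exactly the needed Frobenius algebras, but this is not required by the statement of the corollary itself.
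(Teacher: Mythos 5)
Your proof is correct and follows exactly the route the paper takes: the paper's own argument is the one-line remark that the corollary ``follows at once from Proposition~\ref{prop_gen_func},'' relying on the preceding identification $Z_\alpha(T)=Z_{(B,\varepsilon)}(T)$ established just before the corollary. You have simply spelled out that compressed reasoning in full, and your observation that injectivity of the $\phi_m$ is not actually needed for the equality of generating functions is accurate.
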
 
This follows at once from 
proposition~\ref{prop_gen_func}. This corollary gives a \emph{necessary} condition for $\alpha$ to embed into a TQFT. Notice that equations \eqref{eq_Z_type1} and  \eqref{eq_Z_type2} are much more restrictive to  $Z_{\alpha}(T)$ than the rationality condition $Z_{\alpha}(T)=P(T)/Q(T)$, which is the criterion for $\alpha$ to have finite-dimensional state spaces. The latter condition, though, does not require any restrictions on the field.

\vspace{0.07in}

\begin{remark} 
In the topological theory for evaluation $\alpha$ the state space $A_{\alpha}(1)$ of one circle is also naturally a commutative Frobenius algebra, via the multiplication, unit and trace cobordisms. Thus, 
\[
\phi_1 \ : \ A_{\alpha}(1) \lra B 
\]
is a morphism of commutative Frobenius algebras. 

Algebra $A_{\alpha}(1)$ has a single generator, the handle element $h_{\alpha}$, see Figure~\ref{figure_handle001}. Multiplication by this element is the handle morphism $A_{\alpha}(1)\lra A_{\alpha}(1)$, see Figure~\ref{figure_handle001}. 
Likewise, $B$ has the handle element $h_B$ and $\phi_1(h_{\alpha}) = h_B$. We can then identify $A_{\alpha}(1)$ with the subalgebra $\kk\langle h_B\rangle $ of $B$ generated by $h_B$: 
\begin{equation}\label{eq_handle_alg}
A_{\alpha}(1) \cong \phi_1(A_{\alpha}(1)) = \kk\langle h_B\rangle \subset B. 
\end{equation} 
\end{remark}

\input{figure_handle001}


\subsection{Pseudo-TQFTs in two dimensions.}
Assume now that evaluation $\alpha$ is a pseudocharacter of the category $\Cob_2$ of two-dimensional oriented cobordisms with values in an algebraically closed field $\kk$ of characteristic $0$. We use terms \emph{pseudocharacter} and \emph{pseudo-TQFT} interchangeably. 

\vspace{0.07in} 

Since $\Cob_2$ has a single generating object $\SS^1$ (the circle), a pseudocharacter $\alpha$ (or its generating function $Z_{\alpha}(T)$) has the property that for some $d\ge 0$ completing the antisymmetrizer $e^-_{X,d+1}$ to any closed cobordism and evaluating via $\alpha$ results in $0$. Smallest such $d$ is called the degree of $\alpha$ on $\SS^1$. 

Pick a cobordism $h$ from $d+1$ copies of the circle $\sqcup_{d+1}\SS^1$ to itself. Suppose that in $h$ two of circles on the same side of $h$ (either at the top or at the bottom) belong to the same connected component of $h$. Conjugating $h e^-_{\SS^1,d+1}$ or $e^-_{\SS^1,d+1} h$ by a permutation, if necessary, as in the proof of Proposition~\ref{prop_when_pseudo}, we can bring the two circles next to each other, see Figure~\ref{figure-R1}. 

\vspace{0.1in} 

\input{figure-R1}

Pulling out the antisymmetrizer of size two at these two circles from the larger antisymmetrizer $e^-_{\SS^1,d+1}$, see Figure~\ref{figure-R2} shows that either   $h e^-_{\SS^1,d+1}=0$ or $e^-_{\SS^1,d+1} h=0$, depending on whether the circles are at the bottom or the top. 

\vspace{0.1in} 

\input{figure-R2}

This reduces the pseudocharacter condition to cobordisms where all $d+1$ circles at the bottom of $h$ belong to different components of $h$, and likewise for the top $d+1$ circles. Further conjugating by permutations (due to the  antisymmetrizer these permutations at most change the sign of the evaluation of the closure of $he^-_{\SS^1,d+1}$), we can reduce the consideration to $h$ that consist of several vertical annuli on the left side, each possibly carrying one or more handles, and surfaces genus $0$ or higher capping off circles and the top and bottom on the right side, see Figure~\ref{figure-R3} on the left. 

\vspace{0.1in} 

\input{figure-R3}

These 2D cobordisms can be combinatorially encoded by 1D cobordism with defects, as shown in Figure~\ref{figure-R3} on the right. 

\vspace{0.1in} 

We see that the pseudocharacter condition on $\alpha$ needs only to be checked for special cobordisms that can be encoded by one-dimensional lines and half-intervals that carry defects of a single type, with no additional labels needed ($n$ defects on a line are encoded by a single dot with $n$ next to it). These one-dimensional cobordisms with defects can be interpreted as a special case of the Brauer category with inner endpoints as follows.

Consider the category $\mcC_1$ with a single object $X$ and a generating morphism $x:X\lra X$ with no relations on powers of $x$. Form the Brauer category $\mcB(\mcC_1)$. The latter is a category of one-dimensional oriented cobordisms carrying unlabelled dots. 

Next, we enhance $\mcB(\mcC_1)$ to a Brauer category with endpoints. We use covariant functor 
\begin{equation}\label{eq_funct1}
\Hom_{\mcC_1}(X,\ast) \ : \mcC_1 \lra \Sets, 
\end{equation}
respectively contravariant functor 
\begin{equation}\label{eq_funct2}
\Hom_{\mcC_1}(\ast,X) \ : \mcC_1^{\op} \lra \Sets, 
\end{equation} 
and form the Brauer category with endpoints $\mcB'(\mcC_1)$ for the category $\mcC_1$ and the above two functors. 

The resulting monoidal category has morphisms given by oriented 1D cobordisms between oriented 1D manifolds. Each connected component may have some number of dots. Floating components are oriented intervals and circles decorated by dots. Figure~\ref{figure-R4} gives an example of a morphism in  category $\mcB'(\mcC_1)$.  

\vspace{0.1in} 

\input{figure-R4}

There is a symmetric monoidal functor \begin{equation} \label{eq_func} 
\mcF_1 \ : \ \mcB'(\mcC_1)\lra \Cob_2
\end{equation} 
taking $X$ to an oriented circle and a vertical line with a dot to the handle morphism in Figure~\ref{figure_handle001}. The dual object $X^{\ast}$ is sent to the oppositely oriented circle. In particular, although $X$ and $X^{\ast}$ are not isomorphic in $\mcB'(\mcC_1)$, their images in $\Cob_2$ are isomorphic. 

\input{figure-Q1}

Functor $\mcF_1$ takes a circle with $n$ dots to a connected surface of genus $n+1$, see Figure~\ref{figure-Q1}. This functor also takes an interval with $n$ dots to a a connected surface of genus $n$. 

Suppose that $\alpha$ is a pseudocharacter on $\Cob_2$ evaluating a closed connected surface of genus $n$ to $\alpha_n$, $n\ge 0$. 

Pulling back $\alpha$ from $\Cob_2$ to $\mcB'(\mcC_1)$ via the functor $\mcF_1$ results in a pseudocharacter on $\mcB'(\mcC_1)$, denoted $\alpha^{\ast}$ that evaluates a circle with $n$ dots to $\alpha_{n+1}$ and an interval with $n$ dots to $\alpha_n$. 

From Proposition~\ref{prop_brauer_ss} we know that any pseudocharacter on $\mcB'(\mcC_1)$ with values in an algebraically closed field $\kk$ of characteristic $0$ comes from the character of a semisimple representation $V_1$ of $\mcC_1$ and the two functors \eqref{eq_funct1}, \eqref{eq_funct2}.

Representation $V_1$ assigns the ground field $\kk$ to the identity object $\one$ and a vector space $V$ to the object $X$. Oriented half-intervals induce maps $\iota$ and $p$ between $\kk$ and $V$ in the opposite directions, and endomorphism $x:X\lra X$ induces an endomorphism on $V$, denoted $h$, 
see Figure~\ref{figure-R5}.

\vspace{0.1in} 

\input{figure-R5}

\vspace{0.1in} 

We encode these two vector spaces and maps $\iota$, $p$, $h$ by a graph shown in Figure~\ref{figure-BB6} which has two vertices $v_0$, $v_1$, two oriented edges and one loop, at $v_1$. The data of $\kk,V$ and the three maps defines a particular representation of the corresponding quiver.

\input{figure-BB6}

\vspace{0.07in} 

The composition 
\[p\, h^n \iota: \kk \stackrel{\iota}{\lra} V\stackrel{h^n}{\lra} V \stackrel{p}{\lra} \kk
\]
is the scalar equal to the evaluation $\alpha^{\ast}$ of an oriented interval $I_n$ with $n$ dots. Functor $\mcF_1$ takes $I_n$ to $S_n$, the closed connected surface of genus $n$, so that 
\[
p\, h^n  \iota = \alpha^{\ast}(I_n)=\alpha(S_n)=\alpha_n. 
\]

Likewise, $\mcF_1$ takes a circle with $n$ dots to $S_{n+1}$, a genus $n+1$ connected surface, see Figure~\ref{figure-Q1}. In the TQFT $V_1$ circle with $n$ dots evaluates to the trace of the $n$-th power of $h$, so that $\tr_V(h^n)=\alpha_{n+1}$
Summarizing, we have the relations 
\begin{equation}\label{eq_two}
  p \, h^n \iota = \alpha_n, \ \  \tr_{V}(h^n) = \alpha_{n+1}, \ \ n\ge 0. 
\end{equation} 
Over algebraically closed field $\kk$ endomorphism $h$ of $V$ can be brought to an upper-triangular form. Assume that $h$ has eigenvalue $0$ with some multiplicity $m\in \Z_+=\{0,1,\dots\}$ and nonzero eigenvalues $\lambda_1,\dots, \lambda_s$ with multiplicities $m_1,\dots, m_s\in \{1,2,\dots\}$. Then 
\[
\tr_V(h^n) = \sum_{i=1}^s m_i \lambda_i^n 
\]
and 
\[
\sum_{n\ge 0} \alpha_{n+1} T^n =\sum_{n\ge 0} \tr_V(h^n) T^n =  m +  \sum_{n\ge 0}\sum_{i=1}^s m_i \lambda_i^n T^n = m + \sum_{i=1}^s \frac{m_i}{1-\lambda_i T}. 
\]
Additional term $m$ comes from the kernel of $h$, which contributes to the trace of $h^0$ but not to trace $h^n$ for $n\ge 1$. Note that traces of $h^n$ give no information on $\alpha_0$ (the evaluation of the 2-sphere), since only surfaces of genus $1$ and higher appear when applying $\mcF_1$ to a circle with dots. Multiplying the above by $T$ and adding $\alpha_0$ we obtain strong constraints on the generating function of $\alpha$: 

\begin{prop}\label{prop_gen_pseudo0} 
    Generating function of a pseudocharacter $\alpha$ on $\Cob_2$ with values in an algebraically closed field $\kk$ of characteristic $0$ is rational and has the following form: 
    \begin{equation}\label{eq_Zs}
        Z_{\alpha}(T) \ = \ \alpha_0 + mT+ 
    \sum_{i=1}^k \frac{m_i T}{1-\lambda_i T} = \mu + m T + 
    \sum_{i=1}^s\frac{m_i \lambda_i^{-1}}{1-\lambda_i T}, \ \   m\in\{0,1,\dots\}, \
  m_i\in \{1,2,\dots\}, \ 
  \lambda_i \in \kk^{\ast}, 
    \end{equation}
    where $\mu = \alpha_0 - \displaystyle{\sum_{i=1}^s} m_i\lambda_i^{-1}$. The pseudocharacter has degree 
    \[
\deg_{\alpha}(\SS^1)=m+\sum_{i=1}^s m_i,
\]
which is also the coefficient of the linear term in $Z_{\alpha}(T)$.  
\end{prop}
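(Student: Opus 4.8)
The plan is to reduce the statement to the analysis of the quiver representation $V_1 = \kk \oplus V$ with maps $\iota, p, h$ that is produced by pulling back $\alpha$ along the functor $\mcF_1: \mcB'(\mcC_1) \lra \Cob_2$. First I would invoke the preceding discussion: $\Cob_2$ has the single generating object $\SS^1$, so a pseudocharacter $\alpha$ has a finite degree $d = \deg_\alpha(\SS^1)$, and the antisymmetrizer-vanishing analysis (Figures~\ref{figure-R1}, \ref{figure-R2}, \ref{figure-R3}) shows that the pseudocharacter condition on $\alpha$ only needs to be checked on the ``normal form'' cobordisms encoded by one-dimensional cobordisms with dots. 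This identifies $\alpha^{\ast} := \mcF_1^{\ast}(\alpha)$ as a pseudocharacter on $\mcB'(\mcC_1)$, and by Proposition~\ref{prop_brauer_ss} (extended to countably many objects, or here just one object) $\alpha^{\ast}$ is the character of a unique semisimple representation of $(\mcC_1, \Hom_{\mcC_1}(X,\ast), \Hom_{\mcC_1}(\ast,X))$, which is exactly the data $(\kk, V, \iota, p, h)$ with $V$ finite-dimensional.

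Next I would record the two families of identities \eqref{eq_two}: applying $\mcF_1$ to an interval $I_n$ with $n$ dots gives the genus-$n$ closed surface $S_n$, so $p\,h^n\iota = \alpha(S_n) = \alpha_n$; applying $\mcF_1$ to a circle with $n$ dots gives $S_{n+1}$, so $\tr_V(h^n) = \alpha_{n+1}$. Then I would diagonalize (or upper-triangularize) $h$ over $\overline{\kk} = \kk$: let $0$ be an eigenvalue with multiplicity $m \in \Z_+$ and let $\lambda_1, \dots, \lambda_s \in \kk^\ast$ be the nonzero eigenvalues with multiplicities $m_1, \dots, m_s$. The computation $\tr_V(h^n) = \sum_i m_i \lambda_i^n$ gives
\[
\sum_{n\ge 0} \alpha_{n+1} T^n = m + \sum_{i=1}^s \frac{m_i}{1 - \lambda_i T},
\]
where the extra $m$ records that $\ker h$ contributes to $\tr(h^0) = \dim V$ but not to $\tr(h^n)$ for $n \ge 1$. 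Multiplying by $T$ and adding the free term $\alpha_0$ (which, as noted, is not constrained by any trace of a power of $h$, since $\mcF_1$ of a dotted circle never produces the $2$-sphere) yields
\[
Z_\alpha(T) = \alpha_0 + mT + \sum_{i=1}^s \frac{m_i T}{1 - \lambda_i T},
\]
and a routine partial-fractions rewriting $\frac{m_i T}{1-\lambda_i T} = \frac{m_i \lambda_i^{-1}}{1-\lambda_i T} - m_i \lambda_i^{-1}$ puts this in the form \eqref{eq_Zs} with $\mu = \alpha_0 - \sum_i m_i \lambda_i^{-1}$. The degree claim follows since $\deg_\alpha(\SS^1) = \deg_{\alpha^\ast}(X) = \dim_\kk V = m + \sum_i m_i$ (using that in characteristic $0$ the degree of a pseudocharacter equals $\tr_\alpha(\id)$, cf.\ \cite[Proposition 3]{Dot11}), and $m + \sum_i m_i$ is manifestly the coefficient of $T$ in the displayed expansion.

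The main obstacle I expect is justifying cleanly that the pullback $\alpha^\ast$ is genuinely a pseudocharacter on $\mcB'(\mcC_1)$ — i.e.\ that the finite-degree condition on $\Cob_2$ transfers to finite degree on $X \in \Ob(\mcC_1)$ under $\mcF_1$ — and that Proposition~\ref{prop_brauer_ss} applies to $\mcB'(\mcC_1)$ (a category with \emph{one} object and the two representable presheaves). This is where the normal-form reduction of Figures~\ref{figure-R1}--\ref{figure-R3} does the real work: it shows every closure of $e^-_{\SS^1,d+1}$ against an arbitrary 2-cobordism $h$ either vanishes outright (when two boundary circles of $h$ share a component) or reduces, after conjugating by permutations absorbed into the antisymmetrizer, to a closure of a $1$-dimensional cobordism with defects, so that $\deg_{\alpha^\ast}(X) \le \deg_\alpha(\SS^1) < \infty$; conversely the degrees agree because $\mcF_1$ is essentially surjective on the relevant endomorphisms. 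Once that transfer is in hand, everything else is the linear-algebra bookkeeping above.
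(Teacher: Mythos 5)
Your proof is correct and follows essentially the same route as the paper: pull $\alpha$ back along $\mcF_1$ to a pseudocharacter on $\mcB'(\mcC_1)$, invoke Proposition~\ref{prop_brauer_ss} to realize it via the quiver datum $(\kk,V,\iota,p,h)$, record the identities $p\,h^n\iota=\alpha_n$ and $\tr_V(h^n)=\alpha_{n+1}$, triangularize $h$ over $\kk=\overline{\kk}$, and read off the generating function. The concern you flag about transferring the finite-degree condition across $\mcF_1$ is exactly what the paper's Figures~\ref{figure-R1}--\ref{figure-R3} handle, so your instinct about where the real work lives is right.
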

The case $s=0$, so that the generating function is linear, is allowed. 

\vspace{0.07in}

We will show that not all generating functions above come from pseudocharacters. 
 Notice first that the generating function of a 2D TQFT is necessarily the generating function of a pseudocharacter of $\Cob_2$. Generating functions of 2D TQFTs for algebraically closed fields $\kk$ are classified in Proposition~\ref{prop_gen_func}. Note that the classification is identical with that of Proposition~\ref{prop_gen_pseudo0} for $m\ge 2$. In particular, we obtain the following result. 

\begin{corollary}\label{cor_m_2} 
Any function as in \eqref{eq_Zs} with $m\ge 2$ is the generating function of a $\kk$-valued pseudocharacter of $\Cob_2$ with $\Char(\kk)=0$ and $\overline{\kk}=\kk$. 
\end{corollary}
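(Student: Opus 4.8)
The plan is to realize the prescribed series as the generating function of an honest two-dimensional TQFT, and then use the fact that the character of a TQFT is automatically a pseudocharacter of $\Cob_2$.

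First I would observe that a function $Z(T)$ of the form \eqref{eq_Zs} with $m\ge 2$ is literally of the shape \eqref{eq_Z_type1} appearing in Proposition~\ref{prop_gen_func}, and that proposition asserts that every admissible tuple of parameters $(\mu,m,m_1,\dots,m_s,\lambda_1,\dots,\lambda_s)$ is realized. So I would invoke it to obtain a commutative Frobenius algebra $(B,\varepsilon)$ over $\kk$ with $Z_{(B,\varepsilon)}(T)=Z(T)$ — explicitly, the direct product of the $m$-dimensional nilpotent Frobenius algebra from example~(1) preceding Proposition~\ref{prop_gen_func} with $m_i$ copies of the one-dimensional Frobenius algebra having $\varepsilon(1)=\lambda_i^{-1}$. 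Comparing linear coefficients gives $\dim_{\kk}(B)=m+\sum_{i=1}^{s}m_i=:N$. Let $\mcF=\mcF_{(B,\varepsilon)}\colon\Cob_2\lra\kk\mathrm{-vect}$ be the associated $2$D TQFT and $\alpha_{\mcF}$ its character.

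Next I would verify that $\alpha_{\mcF}$ is a pseudo-TQFT of $\Cob_2$ in the sense of Definition~\ref{def_pseudo}. Since $\Char(\kk)=0$ we have $\Q\subset\kk$, and $\Cob_2$ has the single generating object $\SS^1$, so only $\psdeg_{\alpha_{\mcF}}(\SS^1)<\infty$ needs checking. Applying $\mcF$ to the antisymmetrizer $e^-_{\SS^1,N+1}$ produces the antisymmetrizer of $B^{\otimes(N+1)}$, which vanishes since $\Lambda^{N+1}(B)=0$; hence every closure of $e^-_{\SS^1,N+1}$ into an endomorphism of $\one$ has $\alpha_{\mcF}$-value $0$, as in Figure~\ref{figure-D2}. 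On the other hand, because $N!$ is invertible in the characteristic-zero field $\kk$, $\tfrac{1}{N!}e^-_{\SS^1,N}$ is sent by $\mcF$ to a nonzero idempotent projecting onto $\Lambda^{N}(B)\cong\kk$, so some closure of $e^-_{\SS^1,N}$ has nonzero value. Therefore $\psdeg_{\alpha_{\mcF}}(\SS^1)=N<\infty$, so $\alpha_{\mcF}$ is a pseudocharacter (of degree $N=m+\sum_i m_i$, consistent with Proposition~\ref{prop_gen_pseudo0}). By \eqref{eq_surface_eval} its generating function is $\sum_{g\ge 0}\varepsilon(h_B^{g})T^g=Z_{(B,\varepsilon)}(T)=Z(T)$, which is the assertion.

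The only place requiring genuine care — the ``obstacle'', such as it is — is the finiteness of $\psdeg_{\alpha_{\mcF}}(\SS^1)$: one must use $\Char(\kk)=0$ to know that the top antisymmetrizer $e^-_{\SS^1,N}$ does \emph{not} itself map to $0$ under $\mcF$, so that $N$ really is the degree. Everything else is a direct comparison of the two classifications already established, namely Proposition~\ref{prop_gen_func} (generating functions of commutative Frobenius algebras over $\overline{\kk}$) and Proposition~\ref{prop_gen_pseudo0} (generating functions of pseudocharacters of $\Cob_2$): these agree precisely in the range $m\ge 2$.
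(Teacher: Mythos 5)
Your argument is correct and follows the same route as the paper: you realize the prescribed series as the generating function of a commutative Frobenius algebra via Proposition~\ref{prop_gen_func} (using the explicit direct-product construction the paper gives just before that proposition), and then observe that the character of a $2$D TQFT is automatically a pseudo-TQFT — a fact the paper records in the discussion following Definition~\ref{def_pseudo}, using the same vanishing $\mcF(e^-_{X,n})=0$ for $n>\dim B$. The only difference is that you spell out in a bit more detail why the degree is exactly $N=\dim B$ (invertibility of $N!$ giving a nonzero projection onto $\Lambda^N(B)$), which the paper leaves implicit; this is a fine addition but not logically required, since the minimal $d$ with vanishing $(d+1)$-antisymmetrizer closures automatically has a nonvanishing $d$-antisymmetrizer closure.
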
 

We now consider the remaining cases $m=0,1$. When $m=0$, generating functions of 2D TQFTs are more restrictive, see \eqref{eq_Z_type2}, while $m=1$ is not possible for a 2D TQFT. Since $m=\dim\, \ker(h)$,  these cases correspond to $h:V\lra V$ being invertible ($m=0$) and to $\ker(h)$ of dimension one  ($m=1$). 

\vspace{0.07in} 

Recall the relations \eqref{eq_two}, also reproduced below,
\begin{equation}\label{eq_two_again} 
  p \, h^n \iota = \alpha_n, \ \ \  \tr_{V}(h^n) = \alpha_{n+1}, \ \ \ n\ge 0. 
\end{equation} 
In particular, 
\begin{equation}\label{eq_equal} 
  p \, h^{n+1} \iota = \tr_{V}(h^n), \ \ n\ge 0. 
\end{equation} 
Since $p\, h^{n+1}\iota= \tr_{V}(\iota \,p  \, h^{n+1})$, we see that 
\begin{equation}\label{eq_trace2} 
 \tr_{V}(h^n(h\,\iota \, p -\id_{V})) =0, \ \ n\ge 0. 
\end{equation}

 Consider now the case $m=0$ in the generating function, that is, $\ker(h)=0$ and endomorphism $h$ is invertible. Write $h^{-1}=P(h)$ for some polynomial in $h$. Taking a linear combination of equations \eqref{eq_trace2} gives that 
\[
\tr_{V}(h^{-1}(h\,\iota \, p -\id_{V})) =0
\]
or that 
\[
\tr_{V}(\iota \, p)  = \tr_V(h^{-1}).
\]
Consequently, 
\[
\alpha_0 = \sum_{i=1}^s m_i \lambda_i^{-1}
\]
and, when $m=0$ (equivalently, when $h$ is invertible), the generating function of the pseudocharacter $\alpha$ has the form 
\[
 Z_{\alpha}(T) \ = \   \sum_{i=1}^s\frac{m_i \lambda_i^{-1}}{1-\lambda_i T},\ \ \ 
   m_i\in \{1,2,\dots\}, \ 
  \lambda_i \in \kk^{\ast}, 
\]
which is identical with possible generating functions of 2D TQFTs where the handle element $h$ is invertible, see Proposition~\ref{prop_gen_func} and \eqref{eq_Z_type2}. Thus, all such functions are indeed generating functions of pseudocharacters of $\Cob_2$.

\vspace{0.07in}

The remaining case is $m=1$. We treat it in several steps. 

\vspace{0.07in} 

{\bf I.} Consider first the case of diagonal $h$ with eigenvalues $0,\lambda_1,\dots, \lambda_N$, necessarily with $\lambda_i\not= 0$, $1\le i \le N$. In a suitable basis of $V$ we can write 
\begin{equation}
   p=\begin{pmatrix} 
   p_0 & p_1 & \dots & p_N  \end{pmatrix}, \ \ 
   h = \begin{pmatrix} 0 & 0 & \dots & 0 \\
0 & \lambda_1 & \dots & 0  \\
\vdots & \vdots & \ddots & \vdots \\
0 & 0 & \dots & \lambda_N \end{pmatrix}, \ \ 
   \iota = 
   \begin{pmatrix} 
   a_0 \\ a_1 \\ \vdots \\ a_N 
   \end{pmatrix}. 
\end{equation}
The relations are 
\begin{eqnarray} 
\alpha_0 & = & \sum_{i=0}^N p_i a_i, \\ 
\label{eq_alpha_n} 
\alpha_{n} & = & p_1a_1\lambda_1^{n}+\ldots + p_Na_N\lambda_N^n , \ \  n\ge 1, \\  
\label{eq_alpha_1} 
\alpha_1 & = & N+1, \\
\alpha_{n} & = & \lambda_1^{n-1} + \ldots + \lambda_N^{n-1}, \ \ n \ge 2.
\end{eqnarray} 
Equating the right hand side terms in the second and fourth relations for $n=2,3, \ldots, N+1$ and writing the resulting relations in the matrix form gives the equation  
\begin{equation}
    \begin{pmatrix} 
    \lambda_1^2 & \lambda_2^2 & \dots & \lambda_N^2 \\
    \lambda_1^3 & \lambda_2^3 & \dots & \lambda_N^3 \\
    \vdots  & \vdots & \ddots & \vdots \\
    \lambda_1^{N+1} & \lambda_2^{N+1} & \dots & \lambda_N^{N+1} 
    \end{pmatrix} 
    \begin{pmatrix} 
    p_1 a_1  \\ 
    p_2 a_2 \\ 
    \vdots \\ 
    p_Na_N      
    \end{pmatrix} = 
    \begin{pmatrix}
        \lambda_1 + \ldots + \lambda_N \\
        \lambda_1^2 + \ldots + \lambda_N^2 \\
        \vdots \\
        \lambda_1^{N+1} + \ldots + \lambda_N^{N+1}
    \end{pmatrix}
\end{equation}
The $N\times N$ matrix on the left hand side is the product of the diagonal matrix with entries $\lambda_i^2$ and the Vandermonde matrix. 
Assume that $\lambda_1,\dots, \lambda_N$ are distinct. Then this matrix is invertible and the system of equations has a unique solution 
\[
 p_i a_i = \lambda_i^{-1}, \ \ i=1,\dots, N. 
\]
Substituting this into the relation \eqref{eq_alpha_n} for $n=1$ gives $\alpha_1=N$, which contradicts the relation $\alpha_1=N+1$, see \eqref{eq_alpha_1}. This is a contradiction, assuming $\lambda_i$'s are pairwise distinct. 

To treat the case when some eigenvalues $\lambda_i$'s are equal, assume that the distinct eigenvalues among them are $\mu_1,\ldots, \mu_r$ and appear with multiplicities $m_1,\dots, m_r$ so that $m_1+\ldots + m_r = N$. Permute the basis vectors of $V$ so that the eigenvalues are $0, \mu_1,\ldots, \mu_1, \mu_2, \ldots, \mu_2, \mu_3, \ldots, \mu_r$, in this order, and the matrix of $h$ has the block form, $h=(0)\oplus \mu_1 \I_{m_1}\oplus \ldots \oplus \mu_r\I_{m_r}$, where $\I_m$ is the $m\times m$ identity matrix.  Computing the composition $p\, h^k\iota$, for $k>0$, we obtain 
\begin{eqnarray*}
 \alpha_k & = & p\, h^k \iota = p_1\mu_1^k a_1 +\ldots +p_N \mu_r^k a_N = (p_1a_1+\ldots + p_{m_1}a_{m_1})\mu_1^k+  \\
 &\hspace{2mm}& +\, (p_{m_1+1}a_{m_1+1}+\ldots + p_{m_1+m_2}a_{m_1+m_2})\mu_1^k + \ldots +( p_{N+1-m_r}a_{N+1-m_r}+\ldots + p_N a_N) \mu_r^k   \\
 & = & PA_1\mu_1^k+ PA_2 \mu_2^k + \ldots + PA_r \mu_r^k ,  
\end{eqnarray*}
where 
\[
PA_1 := p_1a_1+\ldots+ p_{m_1}a_{m_1}, \ \ 
PA_2 := p_{m_1+1}a_{m_1+1}+\ldots + p_{m_1+m_2}a_{m_1+m_2}, \ \ \ldots 
\]
Together with the relations  
\[
\alpha_{n+1}=\tr_V(h^n)=\sum_{i=1}^r m_i \mu_i^n,
\] 
equating the two expressions for for each of $\alpha_2,\dots,\alpha_{r+1}$ results in the matrix equation  
\begin{equation}
    \begin{pmatrix} \mu_1^2 & \mu_2^2 & \dots & \mu_r^2 \\
    \mu_1^3 & \mu_2^3 & \dots & \mu_r^3 \\
    \vdots  & \vdots & \ddots & \vdots \\
    \mu_1^{r+1} & \mu_2^{r+1} & \dots & \mu_r^{r+1} 
    \end{pmatrix} 
    \begin{pmatrix} 
    PA_1  \\ PA_2 \\ \vdots \\ PA_N     
    \end{pmatrix} = 
    \begin{pmatrix}
        m_1\mu_1 + \ldots + m_r\mu_r \\
        m_1\mu_1^2 + \ldots + m_r\mu_r^2 \\
        \vdots \\
        m_1\mu_1^{r} + \ldots + m_r\mu_r^{r}
    \end{pmatrix}.
\end{equation}  
The $r\times r$ matrix on the left hand side is nondegenerate since $\mu_i\not=0$ are pairwise distinct. Consequently, this system has a unique solution, which is easy to guess:
\[
PA_i = \frac{m_i}{\mu_i}, \ \ \ 1\le i\le r. 
\]
Writing down the two expressions for $\alpha_1$: 
\[
   p \, h \iota \ = \ \alpha_1 \ = \ \tr_V(h^0) \ = \ \dim(V)=N+1, 
\]
results in a contradiction 
\[
 p\, h \iota = \sum_{i=1}^r PA_i \mu_i = \sum_{i=1}^r m_i = N \not= N+1. 
\]
We see that  there are no solutions to the above system of equations \eqref{eq_two_again} when $h$ is semisimple with $\dim (\ker h)=1$. 

\vspace{0.07in} 

{\bf II.} Suppose next that $h$ has at least two nonsemisimple Jordan blocks with the same generalized eigenvalue $\lambda\not=0 $. Each of these blocks has a vector $v_i$, $i=1,2,$ satisfying $h v_i = \lambda v_i$. The map $p:V\lra \kk$, restricted to $\kk v_1\oplus \kk v_2$ has a kernel. Pick a nonzero vector $v$ in the kernel $p(v)=0$. Then $(0,\kk v)$ is a subrepresentation of $(\kk,V)$, since it is stable under the action of $p$, $\iota,$ and $h$. 

We can assume that $(\kk,V)$ is a semisimple representation of the quiver in Figure~\ref{figure-BB6}, by Proposition~\ref{prop_brauer_ss}. Then $(0,\kk v)$ has a complementary subrepresentation. Restricting to the action of $h$ results in a contradiction, since that one-dimensional semisimple representation in the sum of two nonsemisimple Jordan block cannot have a complement, even after a direct sum with any other finite-dimensional representation of $\kk[h]$. Note that semisimplicity condition on $(p,\iota,h)$ does not immediately imply semisimplicity for $h$, but does allow to exclude having a pair of nonsemisimple Jordan blocks for the same $\lambda$. 

\vspace{0.07in} 

We have now reduced to the case that $h$ has at most one nonsemisimple Jordan block for each of its eigenvalues. 

\vspace{0.07in}

{\bf III.}
Next, we treat the case when $h$ has a single Jordan block $J_{\lambda,N}$, besides the  $(0)$ summand: 
\begin{equation}
   p=\begin{pmatrix} 
   p_0 & p_1 & \dots & p_N  \end{pmatrix}, \ \ 
   h = (0)\oplus \begin{pmatrix} \lambda & 1 & 0 & \dots & 0 \\
0 & \lambda & 1 & \dots & 0  \\
 0 & 0 & \lambda & \dots & 0 \\
\vdots & \vdots & \vdots & \ddots & \vdots \\
0 & 0 & 0 & \dots & \lambda  \end{pmatrix}, \ \ 
   \iota = \begin{pmatrix} a_0 \\ a_1 \\ \vdots \\ a_N \end{pmatrix}. 
\end{equation}
Powers of $h$ are easy to write down: 
\begin{equation} \label{eq_h_power}
h^n  = (0)\oplus 
\renewcommand{\arraystretch}{1.35}
\begin{pmatrix} 
\lambda^n & n \lambda^{n-1} & \binom{n}{2} \lambda^{n-2} & \dots & 
\binom{n}{n-N+1} \lambda^{n-N+1} \\
0 & \lambda^n &  n \lambda^{n-1} & \dots & \binom{n}{n-N+2}\lambda^{n-N+2}  \\
 0 & 0 & \lambda^n & \dots &  \binom{n}{n-N+3} \lambda^{n-N+3} \\
\vdots & \vdots & \vdots & \ddots & \vdots \\
0 & 0 & 0 & \dots & \lambda^n  
\end{pmatrix},
\end{equation} 
with the convention that $\binom{n}{m}=0$ if $m<0$. 
Let 
\begin{equation}\label{eq_gammas} 
\gamma_i = \sum_{j=1}^{N+1-i} p_j a_{i+j} , \ \ 0\le i \le N-1
\end{equation} 
The equation \eqref{eq_equal}, also see below
\begin{equation}\label{eq_equal4} 
  p \, h^{n} \iota = \alpha_{n}=\tr_{V}(h^{n-1}), \ \ n\ge 2, 
\end{equation} 
for $n=2,3,\dots, N+1$ can be written as
\begin{equation}\label{eq_linear2} 
\renewcommand{\arraystretch}{1.35}
 \begin{pmatrix} 
 \lambda^2 & 2\lambda & 1 & \dots & 0 \\
\lambda^3 &  \binom{3}{1} \lambda^2 &  \binom{3}{2} \lambda & \dots & 0 \\
 \lambda^4  & \binom{4}{1}\lambda^3 & \binom{4}{2}\lambda^2 & \dots &  0 \\
\vdots & \vdots & \vdots & \ddots & \vdots \\
\lambda^{N+1} & \binom{N+1}{1}\lambda^{N} & \binom{N+1}{2}\lambda^{N-1} & \dots & \binom{N+1}{N-1}\lambda^2  
\end{pmatrix}
\begin{pmatrix} 
    \gamma_0  \\ \gamma_1 \\ \gamma_2 \\ \vdots \\ \gamma_{N-1}    
    \end{pmatrix} = 
    \begin{pmatrix}
        N\lambda \\
        N\lambda^2 \\
        N\lambda^3 \\
        \vdots \\
        N\lambda^{N}
    \end{pmatrix}.
\end{equation} 
Notice that the $N\times N$ matrix on the left hand side is given by taking the first column $(\lambda^2,\lambda^3,\dots, \lambda^{N+1})^T$, and the subsequent columns are scaled $j$-th derivatives $(j!)^{-1}d^j/d\lambda^j$ of that column, $j=1, \dots, N-1$. 

The $N\times N$ matrix $Y_N$ on the left hand side is invertible. One can see this by first subtracting $(N-1)$-st row times $\lambda$ from the last row, then subtracting $(N-2)$-nd row times $\lambda$ from row $N-1$, and eventually subtracting row one times $\lambda$ from the 2nd row. The resulting matrix has the form 
\[
\renewcommand{\arraystretch}{1.35}
\begin{pmatrix} 
\lambda^2 & 2\lambda & 1 &  0 & \dots & 0 \\
0 &   \lambda^2 &  2 \lambda &  1 & \dots & 0 \\
 0  & \lambda^3 & \binom{3}{1}\lambda^2 & \binom{3}{2}\lambda & \dots &  0 \\
\vdots & \vdots & \vdots & \vdots & \ddots & \vdots \\
0 & \lambda^{N} & \binom{N}{1}\lambda^{N-1} & \binom{N}{2}\lambda^{N-2} & \dots & \binom{N}{N-2}\lambda^2  
\end{pmatrix}.
\]
It is given by placing the truncated matrix $Y_{N-1}$ into the lower right corner and adding the first column $(\lambda^2,0,\dots, 0)^T$ and the first row $(\lambda^2,2\lambda,1,0,\dots, 0)$. 
Iterating this procedure we see that $\det (Y_N)=\lambda^{2N}\not=0$. Consequently, the above system of linear equations on $\gamma_0,\dots, \gamma_{N-1}$ has a unique solution. It is given by 
\[
\gamma_0= \frac{N}{\lambda}, \ \ \ \gamma_i =0, \ \ 1\le i\le N-1. 
\]
Writing down the equation 
\begin{equation}\label{eq_equal5} 
  p \, h \iota = \alpha_{1}=\tr_{V}(\I_{N+1})=N+1
\end{equation} 
gives $\lambda \gamma_0 = N = N+1$, a contradiction. 

\vspace{0.07in}
 
{\bf IV.} We now look at $h$ with one nonsemisimple $\lambda$-Jordan block and one or more semisimple (or $1\times 1$) $\lambda$-Jordan blocks:
\begin{equation}
   p=\begin{pmatrix} 
   p_0 & p_1 & \dots & p_M  \end{pmatrix}, \ \ \ 
   h = (0)\oplus J_{\lambda,N} \oplus (\lambda \I_1)^{M-N}, \ \ \
   \iota = 
   \begin{pmatrix} 
   a_0 \\ a_1 \\ \vdots \\ a_M 
   \end{pmatrix},  
\end{equation}
where $J_{\lambda,N}$ is the standard form of the Jordan block of size $N\times N$  and $(\lambda \I_1)^{M-N}$ is the identity matrix of size $M-N$ times $\lambda$. In that case $\gamma_i$ for $1\le i\le N$ are defined as in \eqref{eq_gammas} while 
\[
\gamma_0 = \sum_{j=0}^M p_j a_j 
\]
is lengthened by the extra terms corresponding to the semisimple tail of $h$. Then equation \eqref{eq_linear2} gets replaced by the same equation but with the column on the right hand side given by $(M\lambda,M\lambda^2,\dots, M\lambda^N)^T$, simply converting coefficient $N$ in each term to $M$. This  system has a unique solution 
\[
\gamma_0= \frac{M}{\lambda}, \ \ \gamma_i =0, \ 1\le i\le N-1. 
\]
Inserting these values into the equations for $\alpha_1$ again results in the contradiction, giving $M=M+1$. 

\vspace{0.07in} 

{\bf V.} Assume now that $h$ consists of the $(0)$ block and several nonsemisimple Jordan blocks $J_{\lambda_i,N_i}$, $N_i>1$,  with pairwise distinct eigenvalues $\lambda_i$: 
\[
h = (0)\oplus J_{\lambda_1,N_1}\oplus \ldots \oplus J_{\lambda_r,N_r}, \ \  N_1+\ldots +N_r =N. 
\]
For each pair $(\lambda_i,N_i)$ set up an $N\times N_i$ matrix $T_i$ with the first column $(\lambda_i^2,\lambda_i^3,\dots, \lambda_i^{N+1})^T$ and subsequent columns given by taking normalized $j$-th derivatives $(j!)^{-1}d^j/d\lambda^j$ of entries of the first column, $j=1, \dots, N_i-1$, 
\begin{equation} \label{eq_linear5} 
    T_i = 
\renewcommand{\arraystretch}{1.35}
 \begin{pmatrix} 
 \lambda_i^2 & 2\lambda_i & 1 & \dots & 0 \\
\lambda_i^3 &  \binom{3}{1} \lambda_i^2 &  \binom{3}{2} \lambda_i & \dots & 0 \\
 \lambda_i^4  & \binom{4}{1}\lambda_i^3 & \binom{4}{2}\lambda_i^2 & \dots &  0 \\
\vdots & \vdots & \vdots & \ddots & \vdots \\
\lambda_i^{N+1} & \binom{N+1}{1}\lambda_i^{N} & \binom{N+1}{2}\lambda_i^{N-1} & \dots & \binom{N+1}{N_i-1}\lambda_i^{N-N_i}  \end{pmatrix}.
\end{equation}
Next, put these matrices together into an $N\times N$ matrix
\begin{equation}\label{eq_T}
    T = \begin{pmatrix} 
    T_1 & T_2 & \dots &  T_r 
    \end{pmatrix}. 
\end{equation}
For each Jordan block $J_{\lambda_i,n_i}$ define 
\begin{equation}\label{eq_gammas2} 
\gamma_{i,j} = \sum_{s=1}^{N_i+1-j} p_{i,s} a_{i,j+s} , \ \ 0\le s \le N_i -1,
\end{equation} 
where 
\[
p_{i,s} = p_{N_1+\ldots +N_{i-1}+s}, \ \ \  a_{i,j+s} = a_{N+1+\ldots + N_{i-1}+j+s}. 
\]
Define $N\times 1$ matrix $\Gamma$ by 
\[
\Gamma^T=\begin{pmatrix}
    \gamma_{1,0} & \ldots & \gamma_{1,N_1-1} & \gamma_{2,0} & \ldots & \gamma_{2,N_2-1} & \ldots & \gamma_{r,0}  & \ldots & \gamma_{r,N_r-1}
\end{pmatrix}.
\]
Finally, consider $N\times 1$ matrix 
\[
R^T = \begin{pmatrix}
    \displaystyle{\sum_{i=1}^r} N_i \lambda_i &  
    \displaystyle{\sum_{i=1}^r} N_i \lambda^2_i & 
    \ldots & 
    \displaystyle{\sum_{i=1}^r} N_i \lambda_i^{N}
\end{pmatrix}.
\]
The equalities coming from the two expressions for $\alpha_2,\ldots, \alpha_{N+1}$ can be written as the matrix equation 
\begin{equation}\label{eq_TG}
 T \, \Gamma \ = \ R.
\end{equation} 
\begin{lemma} \label{lemma_det_form} 
The determinant of $T$ has the form 
\[
\det \, T \ = \ u \, \prod_{i=1}^r \lambda_i^{2N_i} \cdot \prod_{i<j} (\lambda_i-\lambda_j)^{N_iN_j},
\]
where $u$ is an invertible rational number. 
\end{lemma}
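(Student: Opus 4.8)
The plan is to recognize $T$ from \eqref{eq_T} as a confluent (Hermite--Vandermonde) matrix written in the \emph{shifted} monomial basis $\{\lambda^2,\lambda^3,\dots,\lambda^{N+1}\}$, and then to remove the shift by a triangular change of basis on the columns. Set $v(\lambda)=(1,\lambda,\dots,\lambda^{N-1})^{\mathsf T}$ and $w(\lambda)=(\lambda^2,\lambda^3,\dots,\lambda^{N+1})^{\mathsf T}=\lambda^2\,v(\lambda)$. By the definition \eqref{eq_linear5}, the $m$-th column of the block $T_i$ (for $0\le m\le N_i-1$) is the normalized Taylor coefficient $\tfrac{1}{m!}\,w^{(m)}(\lambda_i)$. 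I would introduce the companion blocks $V_i$ of the same size whose $m$-th column is $\tfrac{1}{m!}\,v^{(m)}(\lambda_i)$, and put $V=(V_1\ V_2\ \cdots\ V_r)$; this is precisely the classical confluent Vandermonde matrix attached to the nodes $\lambda_1,\dots,\lambda_r$ with multiplicities $N_1,\dots,N_r$.

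First I would establish the factorization $T=V\,U$ with $U=\mathrm{diag}(U_1,\dots,U_r)$ block diagonal and each $U_i$ an $N_i\times N_i$ upper-triangular matrix. Expanding $w=\lambda^2 v$ by the Leibniz rule, only the derivatives $(\lambda^2)^{(0)},(\lambda^2)^{(1)},(\lambda^2)^{(2)}$ contribute, and after cancelling the binomial factors against the normalizations ($\tfrac{2m}{m!}=\tfrac{2}{(m-1)!}$, $\tfrac{m(m-1)}{m!}=\tfrac{1}{(m-2)!}$) one gets
\[
\tfrac{1}{m!}\,w^{(m)}(\lambda_i)\;=\;\lambda_i^2\cdot\tfrac{1}{m!}\,v^{(m)}(\lambda_i)\;+\;2\lambda_i\cdot\tfrac{1}{(m-1)!}\,v^{(m-1)}(\lambda_i)\;+\;\tfrac{1}{(m-2)!}\,v^{(m-2)}(\lambda_i),
\]
terms of negative order being dropped. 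Hence the $m$-th column of $T_i$ is a combination of the $m$-th, $(m-1)$-st and $(m-2)$-nd columns of $V_i$, so $T_i=V_iU_i$ with $U_i$ upper triangular having $\lambda_i^2$ on the diagonal (and $2\lambda_i,\,1$ on the first two superdiagonals). Therefore $\det U_i=\lambda_i^{2N_i}$, and since $T_i$ uses only the columns of $V_i$ sitting inside $V$, this gives $T=V\cdot\mathrm{diag}(U_1,\dots,U_r)$ and $\det T=\det V\cdot\prod_{i=1}^r\lambda_i^{2N_i}$.

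It then remains only to evaluate $\det V$, which is the classical confluent Vandermonde determinant: for pairwise distinct $\lambda_1,\dots,\lambda_r$ one has $\det V=u\,\prod_{i<j}(\lambda_i-\lambda_j)^{N_iN_j}$ with $u$ a nonzero rational number independent of the $\lambda_i$ (with the $\tfrac1{m!}$ normalization used here it is $\pm1$, reducing to $1$ in the single-block case, where $V$ is unitriangular, consistent with the authors' $\det Y_N=\lambda^{2N}$). I would either cite this, or re-derive it exactly as the single-block computation was done above: viewing $\det V$ as a polynomial in the $\lambda_i$, it vanishes to order $\ge N_iN_j$ as $\lambda_i\to\lambda_j$ because the relevant $N_i+N_j$ Taylor-coefficient columns become linearly dependent, and a degree count forces $\det V$ to equal that product up to a constant. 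Combining with the previous step gives
\[
\det T\;=\;u\,\prod_{i=1}^r\lambda_i^{2N_i}\cdot\prod_{i<j}(\lambda_i-\lambda_j)^{N_iN_j},
\]
as asserted. (Alternatively, one can obtain both factors at once from the Jacobi--Trudi/Schur description of a generalized Vandermonde determinant: the exponent set $\{2,3,\dots,N+1\}$ differs from $\{0,1,\dots,N-1\}$ by the rectangular partition $(2^N)$, whose Schur polynomial specializes to $\prod_i\lambda_i^{2N_i}$.)

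The step I expect to be the real obstacle is the bookkeeping in the factorization $T=VU$: one must keep the $\tfrac1{m!}$ normalizations aligned carefully to be certain that $U_i$ is triangular with the \emph{constant} diagonal $\lambda_i^2$ rather than some $m$-dependent diagonal, and that the row-index identification between $T_i$ (powers $2,\dots,N+1$) and $V_i$ (powers $0,\dots,N-1$) is the consistent one. A more naive route --- treating $\det T$ directly as a polynomial and separately extracting divisibility by $\lambda_i^{2N_i}$ and by $(\lambda_i-\lambda_j)^{N_iN_j}$ --- also works but requires fiddly order-of-vanishing estimates at $\lambda_i=0$, so the $T=VU$ argument is the cleaner one to write up.
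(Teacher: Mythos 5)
Your proof is correct, and it takes a genuinely different route from the paper. The paper does not give a self-contained argument: it points to \cite{MatSep2018}, explaining that the matrix $T$ is (after a rotation, relabelling, and row/column scaling by factorials) a specialization of the matrix $\mathcal{U}[0]$ in Section 5.1.1 of that paper, and that Lemmas 5.2--5.4 there give the determinant formula. Your argument instead extracts the structure directly: the observation that each block $T_i$ is the derivative matrix of $w(\lambda)=\lambda^2 v(\lambda)$ rather than of $v(\lambda)$, which by the Leibniz rule produces the block upper-triangular factorization $T=V\cdot\mathrm{diag}(U_1,\dots,U_r)$ with $\det U_i=\lambda_i^{2N_i}$, cleanly separates the factor $\prod_i\lambda_i^{2N_i}$ from the classical confluent Vandermonde determinant $\det V$. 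This is more elementary and self-contained, and it also gives a conceptual reason for the two factors (column-shift by $\lambda^2$ on one hand, collision of nodes on the other) rather than matching them to an external formula. The one thing worth noting is that your $T=VU$ step recovers the paper's single-block computation ($\det Y_N=\lambda^{2N}$ in step III of the surrounding argument) as the case $r=1$, where $V_1$ is unitriangular, which is a good consistency check. The Schur-function alternative you sketch at the end is fine for the ordinary Vandermonde but needs the coalescence (L'H\^opital) argument to cover the confluent case with multiplicities, so the factorization route you chose is indeed the cleaner one to write up.
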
 
This lemma is proved in~\cite{MatSep2018}. The matrix $\mathcal{U}[0]$ in Section 5.1.1 of that paper is a generalization of the matrix $T$ above. One needs to scale columns of $\mathcal{U}[0]$ by factorials to make the first row consist of powers of $\alpha_1$, in the notations of~\cite{MatSep2018}, and then scale rows by factorials as well to make entries into products of binomial coefficients times $\alpha_i^j$. Rotating the matrix 90 degrees counterclockwise,  specializing $\ell_0=2$ and relabelling $\alpha_i$ to $\lambda_i$ and $L_0$ to $N+2$ recovers the matrix $T$ above. 
Lemmas 5.2, 5.3 and the proof of Lemma 5.4 in~\cite{MatSep2018} imply that the determinant of $T$ has the form  as in Lemma~\ref{lemma_det_form}.
$\square$

In our experiments, parameter $u=\pm 1$. 

\vspace{0.07in} 

Since $\lambda_i\not=0$ and $\lambda_i$'s are pairwise distinct, we see that the equation \eqref{eq_TG} has a unique solution, which can be guessed to be 
\begin{equation}\label{eq_gamma_3}
\gamma_{i,0} = \frac{N_i}{\lambda_i}, \ \
\gamma_{i,j}=0, \ j>0, \ \ 1\le i \le r. 
\end{equation}
Equating the two expressions for $\alpha_1$ now gives a contradiction $N=N+1$. 

\vspace{0.07in} 

{\bf VI.}
The most general case is when, for a given eigenvalue $\lambda_i$, there's at most one nonsemisimple Jordan block and several $1\times 1$ blocks. This case requires only a minor modification of the above argument, with only the the column matrix $R$ on the right of \eqref{eq_TG} changed. It is exactly the  difference between steps {\bf III} and {\bf IV}, see earlier, with the matrix $T$ unchanged and the unique solution given by replacing $N_i$ by $M_i$ in \eqref{eq_gamma_3}, where $M_i-N_i$ is the number of semisimple blocks of eigenvalue $\lambda_i$.  

\vspace{0.07in} 

This concludes our proof that a system $(p,\iota,h)$ as above with $\dim \ker (h)=1$ does not exist. 

\begin{remark}
    The matrix $T$ in \eqref{eq_T} is a variation on the \emph{confluent Vandermonde matrix}. These matrices and their determinants appear in the theory of random matrices \cite{Meh67}, in approximation theory and Diophantine geometry, see \cite{MatSep2018} and references therein.  
\end{remark}

\begin{remark} A alternative short proof of the impossibility $m=1$ is as follows. 
The sequence $p\, h^n \iota$ is a fixed linear combination of entries of matrix $h^n$, i.e. linear combination of sequences of the form
$\lambda_i^n, n\lambda_i^n, n(n-1)\lambda^n$ etc. (finitely many sequences taken from the formula \eqref{eq_h_power} for $h^n$ and its version for $h$ with multiple Jordan blocks and various eigenvalues $\lambda_i$). It is a classical
result that these sequences (and even their tails when we restrict to 
 $n>K$ for some fixed number $K$) are linearly independent. 
This is easy to prove directly: if there is a linear dependence, there is a similar linear dependence of generating functions. If one 
considers tails for $n>K$, then there is a linear dependence
of generating functions modulo polynomials of degree at most $K$.
Now the result follows since the generating functions are
$1/(1-\lambda_i T)^s$ for various $\lambda_i\not=0$ and $s>0$.

 On the other hand, we have that the sequence $p\, h^n \iota$ equals the sequence $\tr_V(h^{n-1})$ for $n\ge 2$.
It follows that coefficients in linear combination for $p\, h^n \iota$ are exactly the same as coefficients for the sequence $\tr_V(h^{n-1})$; thus the same formula
applies even for $n=1$ which gives a contradiction due to the presence of additional $1$ in $\tr_V(h^0)=\tr_V(\I)$ corresponding to the eigenvalue $0$ subspace. 

This obstruction is unique to $m=1$. 
If $m=2$ (and, more generally, $m>2$) there could be a  $2\times 2$ Jordan
cell with eigenvalue $0$ in the decomposition of $h$. Then one of the matrix entries of $h^n$ is the sequence
$(0,1,0,0,0,\dots)$. 
Using this sequence one can adjust an incorrect value
of $\alpha_1$, avoiding the contradiction. 
\end{remark}

We have included both proofs of impossibility of the $m=1$ case in this paper. The first proof is long but exhibits an interesting connection to the confluent Vandermonde determinant and Diophantine approximations~\cite{Meh67,MatSep2018}. 

\vspace{0.07in} 

Putting the cases $m\ge 2$, $m=1$ and $m=0$ together
gives a classification of pseudocharacters for two-dimensional topological theories over algebraically closed fields of characteristic $0$.

\begin{theorem} \label{thm_classif} Suppose that $\alpha$ is a pseudo-TQFT (a pseudocharacter) for the category $\Cob_2$ of oriented two-dimensional cobordisms taking values in an algebraically closed field $\kk$ of characteristic $0$. Then
the generating function $Z_{\alpha}(T)$ is rational and has the form  \eqref{eq_Z_type1} or \eqref{eq_Z_type2} as in Proposition~\ref{prop_gen_func}. In particular, any such pseudocharacter embeds into a two-dimensional TQFT for $\Cob_2$ given by some commutative Frobenius algebra $(B,\varepsilon)$ over $\kk$. 
\end{theorem}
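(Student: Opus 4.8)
The statement has two parts: a classification of the generating function $Z_{\alpha}(T)$, and the realizability of $\alpha$ via an actual 2D TQFT. The plan is to first establish the shape of $Z_{\alpha}(T)$ by the argument already laid out in this section: since $\Cob_2$ has a single generating object $\SS^1$ and $\alpha$ is a pseudocharacter, one pulls $\alpha$ back along the functor $\mcF_1:\mcB'(\mcC_1)\lra\Cob_2$ of \eqref{eq_func} to a pseudocharacter $\alpha^{\ast}$ on the Brauer category with endpoints $\mcB'(\mcC_1)$. By Proposition~\ref{prop_brauer_ss}, $\alpha^{\ast}$ is the character of a unique semisimple representation of the quiver in Figure~\ref{figure-BB6}, i.e.\ of data $(p,\iota,h)$ on $(\kk,V)$ satisfying the relations \eqref{eq_two}. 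Reading off $\tr_V(h^n)=\alpha_{n+1}$ and bringing $h$ to Jordan form over $\kk=\overline{\kk}$ gives the rational form \eqref{eq_Zs} of Proposition~\ref{prop_gen_pseudo0}, where $m=\dim\ker(h)$.

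Next I would dispose of the cases $m=0$ and $m=1$ exactly as in the body: for $m=0$, invertibility of $h$ together with $p\,h^{n+1}\iota=\tr_V(h^n)$ forces $\tr_V(\iota p)=\tr_V(h^{-1})$, hence $\alpha_0=\sum_i m_i\lambda_i^{-1}$, landing in the semisimple family \eqref{eq_Z_type2}; and for $m=1$, the confluent-Vandermonde analysis of Steps I--VI (or the short generating-function argument in the final remark) shows that no system $(p,\iota,h)$ with $\dim\ker(h)=1$ exists, so $m=1$ does not occur. Combined with the case $m\ge 2$, this shows $Z_{\alpha}(T)$ has the form \eqref{eq_Z_type1} or \eqref{eq_Z_type2} of Proposition~\ref{prop_gen_func} --- precisely the generating functions realized by commutative Frobenius algebras over $\kk$.

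For the realizability clause it then suffices to invoke the ``all possible values of the parameters are realized'' half of Proposition~\ref{prop_gen_func}: given $Z_{\alpha}(T)$ in the form \eqref{eq_Z_type1} (resp.\ \eqref{eq_Z_type2}), take $B$ to be the direct product of the $m$-dimensional nilpotent Frobenius algebra of example~(1) (omitted when $m=0$) with $m_i$ copies of the one-dimensional Frobenius algebra with $\varepsilon(1)=\lambda_i^{-1}$, and adjust $\mu$ via the trace on the nilpotent factor. The associated 2D TQFT $\mcF_{(B,\varepsilon)}$ has generating function equal to $Z_{\alpha}(T)$, and since a 2D topological theory embeds (via the nondegenerate tube pairing, as noted around \eqref{eq_map_phi}) into any TQFT with the same closed-surface evaluations, we get the desired embedding $\phi:\mcF_{\alpha}\lra\mcF_{(B,\varepsilon)}$.

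The main obstacle is the $m=1$ elimination, which is the genuinely new content: one must show the overdetermined system \eqref{eq_two_again} has no solution when $h$ has a one-dimensional kernel. I would handle the semisimple-$h$ subcase and the single-Jordan-block subcase by hand (Steps I, III), reduce away repeated nonsemisimple blocks for one eigenvalue using semisimplicity of $(p,\iota,h)$ from Proposition~\ref{prop_brauer_ss} (Step II), and for the general configuration appeal to Lemma~\ref{lemma_det_form} on the determinant of the confluent-Vandermonde-type matrix $T$ of \eqref{eq_T} to guarantee a \emph{unique} solution $\gamma_{i,0}=N_i/\lambda_i$, $\gamma_{i,j}=0$, which then contradicts the $\alpha_1$ relation by producing $N=N+1$ (Steps IV--VI). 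The short alternative argument via linear independence of the sequences $\binom{n}{s}\lambda_i^{n}$ should also be included as a conceptual cross-check.
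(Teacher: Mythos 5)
Your proposal is correct and follows essentially the same route as the paper: pull $\alpha$ back along $\mcF_1$ to $\mcB'(\mcC_1)$, apply Proposition~\ref{prop_brauer_ss} to get the semisimple data $(p,\iota,h)$, read off the rational form of $Z_{\alpha}(T)$ from the Jordan decomposition of $h$, handle the cases $m\ge 2$, $m=0$, and the impossible $m=1$ as in the body, and conclude the embedding into $\mcF_{(B,\varepsilon)}$ from the nondegeneracy of the tube pairing in the universal construction. No gaps.
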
 

This result can be viewed as the first step in studying pseudocharacters beyond dimension one. While the one-dimensional case, needed  in number theory, has $G$-labelled defects placed on a one-manifold, the above theorem is for two-manifolds without defects. Possible extensions of that result to two-manifolds with defects are worth investigating. For instance, placing labelled zero-dimensional defects on a surface corresponds to coupling category $\Cob_2$ to a commutative algebra, see~\cite[Section 8]{KKO}. A classification of pseudocharacters for such decorated $\Cob_2$ categories may extend the work of Buchstaber and Rees~\cite{BR2004} on pseudocharacters for commutative rings and its generalization to the super case~\cite{KV20}.




\bibliographystyle{amsalpha} 


\bibliography{top-automata}

\end{document}